\documentclass[12pt]{amsart}

\usepackage{amsmath, amsthm, amssymb, amsfonts, enumerate}
\usepackage[colorlinks=true,linkcolor=blue,urlcolor=blue]{hyperref}
\usepackage{dsfont}
\usepackage{color}
\usepackage{geometry}
\usepackage{todonotes}

\usepackage{graphicx}
\usepackage{caption}

\usepackage{float}

\usepackage{mathtools}
\usepackage{soul}

\mathtoolsset{showonlyrefs}

\def \cA{\mathcal{A}}
\def \cC{\mathcal{C}}
\def \cD{\mathcal{D}}

\def \cH{\mathcal{H}}
\def \cF{\mathcal{F}}
\def \cG{\mathcal{G}}

\def \cJ{\mathcal{J}}
\def \cL{\mathcal{L}}

\def \cO{\mathcal{O}}

\def \P{\mathsf P}
\def \Q{\mathsf Q}
\def \M{\mathsf M}

\def \E{\mathsf E}

\def \N{\mathbb{N}}
\def \R{\mathbb{R}}

\def \ud{\mathrm{d}}
\def \e{\mathrm{e}}

\newcommand{\eps}{\varepsilon}

\newtheorem{theorem}{Theorem}[section]
\newtheorem{lemma}[theorem]{Lemma}
\newtheorem{corollary}[theorem]{Corollary}
\newtheorem{proposition}[theorem]{Proposition}
\newtheorem{example}[theorem]{Example}
\newtheorem{definition}[theorem]{Definition}
\newtheorem{remark}[theorem]{Remark}
\newtheorem{assumption}[theorem]{Assumption}

\title[Stochastic control with state constraint]{A class of stochastic control problems\\ with state constraints}
\author[De Angelis]{Tiziano De Angelis}
\author[Ekstr\"om]{Erik Ekstr\"om}
\thanks{{\em Mathematics Subject Classification 2020}: 93E20, 49N10, 35K58, 49L20, 60J70}
\keywords{Stochastic control, linear-quadratic, state constraints, Doob's $h$-transform, stochastic target, linearly controlled diffusions}
\address{T.\ De Angelis: School of Management and Economics, Dept.\ ESOMAS, University of Torino, C.so Unione Sovetica 218bis, 10134, Torino, Italy; Collegio Carlo Alberto, Piazza Arbarello 8, 10122, Torino, Italy}
\email{tiziano.deangelis@unito.it}
\address{E.\ Ekstr\"om: Department of Mathematics, Uppsala University, Box 256, 75105 Uppsala, Sweden}
\email{ekstrom@math.uu.se}
\date{\today}

\numberwithin{equation}{section}

\begin{document}

\begin{abstract}
We obtain a probabilistic solution to linear-quadratic optimal control problems with state constraints. Given a closed set $\cD\subseteq [0,T]\times\R^d$, a diffusion $X$ in $\R^d$ must be linearly controlled in order to keep the time-space process $(t,X_t)$ inside the set $\cC\coloneqq([0,T]\times\R^d)\setminus\cD$, while at the same time minimising an expected cost that depends on the state $(t,X_t)$ and is quadratic in the speed of the control exerted. 
We find a probabilistic representation for the value function and an optimal control under a set of mild sufficient conditions concerning the coefficients of the underlying dynamics and the regularity of the set $\cD$. The optimally controlled dynamics is in strong form, in the sense that it is adapted to the filtration generated by the driving Brownian motion. Fully explicit formulae are presented in some relevant examples.   
\end{abstract}

\maketitle

\section{Introduction}
Problems of optimal navigation of a vehicle are prominent in the engineering literature and they can be cast as stochastic control problems with constrained state-dynamics (see, e.g., Shah et al.\ \cite{STP}, where a linear-quadratic structure is considered). The constraint imposes that the controlled process {\em must not} visit certain regions of the state-space, because that would correspond to colliding with other physical objects. Motivated by this type of questions we find a {\em probabilistic solution} to linear-quadratic stochastic control problems with state-constraints. Given a closed set $\cD\subseteq [0,T]\times\R^d$, a diffusion $X$ in $\R^d$ must be linearly controlled in order to keep the time-space process $(t,X_t)$ inside the set $\cC\coloneqq([0,T]\times\R^d)\setminus\cD$, while at the same time minimising an expected cost that depends on the state $(t,X_t)$ and is quadratic in the speed of the control exerted.

The problem we study has 
many connections to other questions in stochastic control and stochastic analysis. For example, we find links to the theory of logarithmic transformations and risk-sensitive optimisation (see Fleming and Soner \cite[Ch.\ VI]{FS} and our Remark \ref{rem:risksensitive}), exit problems in the small-noise limit (see, e.g., Fleming \cite{F}), viability problems (see, e.g., Bardi and Jensen \cite{BJ}, Buckdahn et al. \cite{BFQ, BPQR, BQRR}), Doob's $h$-transform and potential theory
(see, e.g., Rogers and Williams \cite{RW1,RW2} and our Remark \ref{rem:doob}), and certain stochastic target problems (see, e.g., Ankirchner et al.\ \cite{AK,AJK}, Bank and Voss \cite{BV}, Dolinsky et al.\ \cite{DGGG}, Horst and Xia \cite{HX}). 

In viability problems the aim is to determine conditions under which it is possible to keep a suitably controlled dynamics inside a given domain. The question can be addressed by setting up a stochastic control problem whose cost function depends on the distance of the controlled dynamics from the boundary of the domain. Papers on this topic mentioned above use methods based on viscosity solutions of suitable Hamilton-Jacobi-Bellman equations. 

Stochastic target problems concern controlling a state dynamics in order to minimise (or maximise) a cost function (or reward) under the constraint that the terminal value of the dynamics lies in a certain set. The methods involved in the solution of such problems rely mostly on Backward Stochastic Differential Equations and, in the linear-quadratic case, on Riccati equations. Fleming and Sheu \cite{FSh} formulated linear-quadratic control problems with state constraint at the terminal time (i.e., in our notation $\cD=\{T\}\times\Sigma$ for $\Sigma\subset\R^d$) and then focussed their attention on the particular case of so-called {\em bridges} (i.e., $\Sigma=\{x_0\}$). In a broader sense, Dai Pra's work \cite{DP} can also be interpreted as a linear-quadratic stochastic target problem: the optimiser minimises a quadratic cost of exerting control with the constraint that the distribution of the final state of the dynamics is fixed a priori. This was motivated by connections with reciprocal diffusions (Bernstein \cite{B}) and the so-called {\em Schr\"odinger bridge}. 

The closest works to ours that we could find in the literature are a paper by Day \cite{D} and a paper by Fuhrman \cite{F03}. In \cite{D}, the author considers an infinite-time horizon, linear-quadratic, stochastic control problem in which the state dynamics is constrained to evolve in a given domain $\Delta\subset\R^d$. Day's work was motivated by earlier results by Fleming \cite{F} on exit probabilities and large deviation results (in the small-noise limit). 
We share with the approach by Day \cite{D} the use of a logarithmic transformation already introduced by Fleming \cite{F} and widely illustrated in \cite[Ch.\ VI]{FS} (the same transformation is also used in \cite{DP}, combined with probabilistic methods).
Differently from our probabilistic approach, Day uses PDE methods for second order elliptic equations to solve his problem. He imposes that the domain $\Delta$ be bounded with $C^2$ boundary and the diffusion coefficient of the controlled dynamics be uniformly elliptic. Both drift and diffusion coefficients are assumed Lipschitz continuous. 

In our work we consider a finite-time horizon, leading to associated parabolic equations. We also substantially relax the requirement on the smoothness of the boundary of the domain $\cC$ by using the notion from potential theory of ``regularity in the sense of diffusions'' \cite[Ch.\ I.11]{BG}. It is worth noticing (see Remark \ref{rem:suff}) that such regularity can be obtained in many examples without smoothness of the boundary. Also (global) Lipschitz continuity of the coefficients and uniform ellipticity are not necessary, as long as certain boundary value problems associated with the infinitesimal generator of the uncontrolled dynamics admit a classical solution on smooth bounded (parabolic) domains; see our Assumption \ref{ass:L} and the subsequent discussion for details. 

Furhman \cite{F03} instead takes a BSDE approach to solve linear-quadratic problems when the underlying dynamics is infinite-dimensional. Also in this case, Fleming's logarithmic transformation is a key tool in the analysis. The assumptions on the payoff functions are general enough to allow for a formulation with state constraints which formally includes our finite-dimensional problems as a special case (see in particular \cite[Sec.\ 6]{F03}). However, due to the generality of the setting, an optimally controlled (constrained) dynamics in \cite{F03} can only be constructed in weak form with a limiting argument, whereas we construct it in strong form. Differently from our paper, the analysis of the regularity of the value function and solvability of the associated Hamilton-Jacobi-Bellman (HJB) equation are not addressed in \cite{F03}, where the HJB and the form of the optimal control are only outlined in the final paragraph of \cite[Sec.\ 6]{F03}. 

In our setting, using ideas from diffusion theory and It\^o's calculus we derive a closed-form expression for the value function $v:[0,T]\times\R^d\to [0,\infty]$ of the constrained control problem (notice that $v=\infty$ in $\cD$). We also show that $v$ is continuously differentiable in time and twice continuously differentiable in space in the set $\cC$. That allows us to prove that $v$ solves a suitable HJB equation in $\cC$ with singular boundary conditions. Moreover, thanks to the regularity of $v$ we construct an optimal Markovian control, {\em in closed form}, that minimises the expected cost and satisfies the constraint on the controlled dynamics. Our optimally controlled dynamics is expressed in strong formulation, in the sense that it is adapted to the Brownian filtration. This fact is not trivial, because the optimal control does not enjoy linear growth properties and it actually blows up at the boundary of the set $\cC$. The closed-form expression for $v$ is of a purely probabilistic nature. 
We show that $v=-2\ln u$, where $u:[0,T]\times\R^d\to [0,1]$ is the expectation of an exponential payoff depending on an unconstrained process $Z$ killed upon entering the set $\cD$. This representation can be used to obtain explicit formulae for $v$ and for the optimal control, when the density of the killed process is known explicitly. Moreover, it can be used for numerical simulation with simple Monte Carlo methods in all other situations. Finally, we recover the classical results from risk-sensitive literature (e.g., \cite{BD}) when we set $\cD=\varnothing$ (i.e., the dynamics is unconstrained; see Remark \ref{rem:risksensitive}), and the classical Doob's $h$-transform when the cost function does not depend explicitly on the state $X$ (see Remark \ref{rem:doob}). 

The paper is organised as follows. In Section \ref{sec:setup} we set up the problem, state the main result (Theorem \ref{thm:main}) and provide explicitly solvable examples as an illustration. We also discuss in detail the connections to risk-sensitive functionals and Doob's $h$-transform. In Section \ref{sec:proof} we prove our main result. In Section \ref{sec:suffcond} we provide simple sufficient conditions that imply the validity of a general assumption made in Theorem \ref{thm:main}. 

\section{Setting and main results}\label{sec:setup}
Let $d,d'\ge 1$, and denote $\cO\coloneqq[0,T]\times\R^d$. Given a subset $A\subseteq \cO$ we denote $\overline A$ the closure of $A$ in $\cO$. The class $C(A)$ denotes continuous functions $A\to\R$. For $j,k\in\N\cup\{0\}$ the class
$C^{j,k}(A)$ denotes the subset of $C(A)$ consisting of functions that are $j$-times continuously differentiable in $t$ and $k$-times in $x$. 
We denote 
$A_{[s,t)}=A\cap([s,t)\times\R^d)$ and $A_{t}=A\cap(\{t\}\times\R^d)$ for $0\le s<t\le T$.
In particular, $\cO_{[0,T)}\coloneqq[0,T)\times\R^d$. Finally, to measure the distance from a point $(t,x)\in\cO$ to a set $A\subset\cO$ we use 
\[
\mathrm{dist}((t,x),A)\coloneqq\inf\{|(t,x)-(s,y)|_{d+1},\ (s,y)\in A\},
\]
where $|\cdot|_{d+1}$ is the Euclidean norm in $\R^{d+1}$. 

Let $\cD$ be a closed subset of $\cO$ and let $\cC=\cO\setminus \cD$ be its complement. 
The set $\cD$ represents the ``forbidden'' region for the controlled dynamics. The case of interest for us is when $\cD$, $\cC$ and $\cC_T$ are all 
non-empty; cf. Assumption \ref{ass:ZD}. Remark \ref{rem:risksensitive} below illustrates the case $\cD=\varnothing$.
Simple examples of sets $\cD$ are $\{T\}\times (\cup_{i=1}^d \{x_i\le 0\} )$ and $\cD=[0,T]\times
(\cup_{i=1}^d \{x_i\le 0\} )$ (cf. Examples~\ref{ex:1} and \ref{ex:2} below, respectively). 
 
In order to introduce the controlled dynamics we take measurable functions  
$\mu:\cO\to\R^d$ and $\sigma:\cO\to \R^{d\times d'}$,
which are locally Lipschitz in $x$ and of linear growth, uniformly for $t\in [0,T]$.
We are going to formulate our state-constrained stochastic control problem in a {\em strong} form on a given filtered probability space $(\Omega,\cF,(\cF_t)_{t\in[0,T]},\P)$ equipped with a $d'$-dimensional Brownian motion $(W_t)_{t\in[0,T]}$. With no loss of generality we will assume that $(\cF_t)_{t\in[0,T]}$ is the Brownian filtration augmented with $\P$-null sets.
\begin{definition}[{{\bf Admissible controls}}]\label{def:admissible}
Fix $(t,x)\in\cO$. 
A {\em control} is a $\R^{d'}$-valued process $(a_s)_{s\in[t,T]}$ which is progressively measurable 
and such that the SDE 
\begin{equation}\label{eq:Xa}
X_s=x+\int_t^s \big[\mu(u,X_u)+\sigma(u,X_u)a_u\big]\ud u+\int_t^s \sigma(u,X_u)\ud W_u,\quad s\in[t,T],
\end{equation}
admits a unique {\em strong solution} $(X^{t,x}_s)_{s\in[t,T]}$. 
Moreover, for $(t,x)\in\cC$ the control $(a_s)_{s\in[t,T]}$ is {\em admissible} if 
\[
\P\big((s,X^{t,x}_s)\in \cC,\,\forall s\in[t,T]\big)=1.
\]
The class of admissible controls is denoted by $\cA^{\cD}_{t,x}$.
Finally, an admissible control is {\em Markovian} if $a_u= \alpha(u,X_u)$, $u\in[t,T]$, where $\alpha:[0,T]\times\R^d\to \R^{d'}$ is a measurable function. 
\end{definition}

\begin{remark}\label{rem:yamada}
A technical hurdle with admissible controls in strong form arises because such controls may be unbounded and exhibit singularities of the trajectories near the boundary of the set $\cC$. For the same reasons even a weak formulation of the stochastic control problem via change of measure and Girsanov theorem (see, e.g., \cite[Sec.\ 2]{KZ}) would lead to difficulties in the definition of the exponential martingale, which is needed as Radon-Nikodym derivative for the measure change.
\end{remark}

\begin{remark}
Sometimes we use the notation $X^{t,x;a}$ to emphasise the dependence of $X$ on the control process $(a_s)_{s\in[t,T]}$ and on the initial condition $(t,x)$. 
A priori we do not assume that $X^{t,x;a}$ be Markovian but it will turn out that the optimally controlled dynamics is such.
\end{remark}

In order to set up our problem we introduce measurable functions 
$f:\cO\to [0,\infty)$ and $g:\R^d\to[0,\infty)$.
Given $(t,x)\in\cC$ and an admissible control $a\in\cA^{\cD}_{t,x}$, the associated cost function is denoted 
\begin{equation}\label{eq:J}
\cJ_{t,x}(a)\coloneqq\E\left[\int_t^{T}\!\!\big(f(s,X^{t,x;a}_s)+|a_s|^2_{d'}\big)\ud s+g(X^{t,x;a}_T)\right],
\end{equation}
where $|\,\cdot\,|_{d'}$ stands for the Euclidean norm in $\R^{d'}$ and $\E$ is the expectation under the measure $\P$. Later on we will also use $|\cdot|_{d\times d'}$ for the Euclidean norm on the space of $d\times d'$-matrices, i.e., $|A|_{d\times d'}=(\sum_{i=1}^d\sum_{j=1}^{d'}|A_{ij}|^2)^\frac12$. We will omit the subscript and only use $|\cdot|$ to indicate the Euclidean norm in either $\R^d$, $\R^{d'}$ or $\R^{d\times d'}$, when no confusion shall arise. The value function of the stochastic control problem with state constraint reads
\begin{equation}\label{eq:P1}
v(t,x)=\inf_{a\in\cA^{\cD}_{t,x}}\cJ_{t,x}(a),
\end{equation}
for $(t,x)\in\cC$, and we set $v\equiv +\infty$ on $\cD$.

\begin{remark}
The question as to whether the set $\cA^\cD_{t,x}$ is empty or not is a so-called {\em viability} problem for the set $\cC$. Such problems have been studied extensively in the literature (see, e.g., \cite{BJ,BFQ,BPQR,BQRR}).
Conditions are provided in Theorem~\ref{thm:main} under which $\cA^\cD_{t,x}\neq\varnothing$ and an optimal control for our stochastic control problem is obtained.
\end{remark}

Our main result is stated in terms of an auxiliary uncontrolled dynamics. We define it on another probability space, with no loss of generality, in order to emphasise that the link between the problem in \eqref{eq:P1} and the auxiliary dynamics is purely via the laws of the processes. We consider a filtered probability space $(\Omega',\cG,(\cG_s)_{s\in[0,T]},\Q)$ equipped with a $d'$-dimensional Brownian motion $(\bar W_t)_{t\in[0,T]}$. Thanks to the assumed regularity of $\mu$ and $\sigma$, for any $(t,z)\in\cO$ the SDE
\begin{equation}\label{eq:Z}
Z_s=z+\int_t^s \mu(u,Z_u)\ud u+\int_t^s \sigma(u,Z_u)\ud \bar W_u,\quad s\in[t,T],
\end{equation}
admits a unique strong solution.  
Set 
\begin{equation}\label{eq:tauD}
\tau_\cD=\inf\{s\in [t,T] : (s,Z_s)\in \cD\}=\inf\{s\in[t,T]:(s,Z_s)\notin\cC\},
\end{equation}
with the convention that $\inf\varnothing=\infty$. 
Sometimes we use the notation $Z^{t,z}$ and $\tau^{t,z}_\cD$ to keep track of the initial conditions. The process $Z$ is strong Markov with respect to its own filtration and we use the notation $\Q_{t,z}(\,\cdot\,)=\Q(\,\cdot\,|Z_t=z)$. Expectation under the measure $\Q$ will be denoted $\E^\Q$. We also assume that 
$\Q_{t,z}(\tau_{\cD}=\infty)>0$ for any starting point $(t,z)\in\cC$ so that
the process $(s,Z_s)$ remains in $\cC$ during $[t,T]$ with positive probability.

\begin{assumption}\label{ass:ZD}
For any $(t,z)\in\cC$ it holds $\Q_{t,z}(\tau_\cD = \infty)>0$.
\end{assumption}
 
Assumption \ref{ass:ZD} holds in $d=1$ if, for example, the two conditions below are satisfied
(the extension to $d>1$ is straightfoward for diffusion processes whose law has full support): 
\begin{itemize}
\item[(i)] The functions $\sigma$, $\sigma^{-1}$ and $\mu$ are continuous on $\overline\cC$;
\item[(ii)] For any starting point
$(t,z)\in\cC$ there is a ``tunnel''
\[
\mathcal E_{t,z}\coloneqq\{(s,y)\in[t,T]\times\R :b(s)<y<c(s)\}\subset \cC,
\]
where $b$ and $c$ are continuous functions with $b(t)<z<c(t)$ and 
\[
\inf_{s\in[t,T]}\big( c(s)-b(s)\big)>0.
\]
\end{itemize}

The infinitesimal generator of the process $Z$ is given by
\begin{equation}
(\cL \psi)(t,z)\coloneqq\tfrac{1}{2}\mathrm{tr}(\sigma\sigma^\top D^2 \psi)(t,z)+\langle \mu,\nabla \psi \rangle(t,z),\quad \psi\in C^{0,2}(\cO),
\end{equation}
where $\mathrm{tr}(\,\cdot\,)$ is the trace, $\sigma^\top$ is the transpose of $\sigma$, $\nabla \psi$ and $D^2 \psi$ are the spatial gradient and the spatial Hessian matrix of $\psi$, respectively, and $\langle\,\cdot\,,\,\cdot\,\rangle$ is the scalar product in $\R^{d}$.
A technical assumption concerning the generator $\cL$ is stated below.
We use the notations $B\subset\R^d$ for a generic open ball and $U_{t_1,t_2}^B\coloneqq [t_1,t_2)\times B$ for any given $0\le t_1<t_2\le T$. The parabolic boundary of $U\coloneqq U^B_{t_1,t_2}$ is denoted $\partial_P U\coloneqq\big([t_1,t_2]\times\partial B\big)\cup \big(\{t_2\}\times B\big)$, where $\partial B= \overline B\setminus B$.

In what follows, with a small abuse of notation we consider $g$ as a function defined on $\cO$ and, in particular, we consider its restriction to $\cC_T$.

\begin{assumption}\label{ass:L}
The function $g$ is continuous on $\cC_T$. The functions $\mu$, $\sigma$ and $f$ are such that, given any set $U=U_{t_1,t_2}^B$ with closure contained in $\cC$, and any continuous function $\varphi:\partial_P U\to \R$, there is a unique classical solution $w\in C(\overline U)\cap C^{1,2}(U)$ of the boundary value problem
\begin{equation}
\left\{\begin{array}{ll}
\partial_t w(t,z)+(\cL w)(t,z)-\frac{1}{2}f(t,z)w(t,z)=0,& (t,z)\in U,\\[+5pt]
w(t,z)=\varphi(t,z),& (t,z)\in \partial_P U.
\end{array}\right.
\end{equation}
\end{assumption}

Sufficient conditions that imply the validity of the assumption above are for example, uniform non-degeneracy of the diffusion coefficient on any compact and local H\"older-continuity of functions $\mu$, $\sigma$ and $f$ (see, e.g., \cite[Ch.\ 3, Sec.\ 4, Thm.\ 9]{Fri}). 
However, weaker conditions are also known in some cases of degenerate diffusion coefficient, leading to $\cL$ being a so-called hypoelliptic operator (see, e.g., \cite{Hor}, and \cite{LP94} for evolution equations).  

To state the main result of the paper, we define a function $u:\cO\to[0,1]$ as
\begin{equation}\label{eq:u}
u(t,z)\coloneqq\E_{t,z}^\Q\Big[\exp\Big(-\frac{1}{2}\int_t^T f(s,Z_s)\ud s-\frac{1}{2}g(Z_T)\Big)1_{\{T<\tau_\cD\}}\Big].
\end{equation}
In order to state our final assumption we introduce sets 
\begin{equation}\label{eq:setnot}
\begin{aligned}
\cD^\circ&\coloneqq \mathrm{int}(\cD),\quad \cD^\circ_{[0,T)}\coloneqq \cD^\circ\cap([0,T)\times\R^d),\quad \overline \cC_{[0,T)}\coloneqq\overline\cC\cap([0,T)\times\R^d),\\ 
\overline{\cC_T}&\coloneqq \overline{\cC\cap(\{T\}\times\R^d)},\quad \cD^\circ_T\coloneqq\cD_T\setminus\overline{\cC_T}\quad\text{and}\quad\partial\cC_T\coloneqq \cD_T\cap\overline{\cC_T}.
\end{aligned}
\end{equation}
Notice that $\cD^\circ_T\neq (\cD^\circ)_T$ because $(\cD^\circ)_T=\cD^\circ\cap(\{T\}\times\R^d)=\varnothing$ whereas $\cD^\circ_T$ may be nonempty.
\begin{assumption}\label{ass:u} 
The following holds:
\begin{itemize}
\item[(i)] The function $u$ is continuous\footnote{Because $u=0$ on $\cD$ it is equivalent to require continuity of $u$ on $\overline\cC_{[0,T)}\cup\cC_T\cup\cD^\circ_T$} on $\cO\setminus\partial\cC_T$. 
\item[(ii)] The process $Z$ is such that 
\[
\Q_{t,z}\big((T,Z_T)\in\partial\cC_T\big)=0,\quad \text{for $(t,z)\in\cO_{[0,T)}$}.
\]
\item[(iii)] On the event $\{T<\tau_\cD\}$ we have $\int_t^T f(s,Z_s)\ud s<\infty$, $\Q_{t,z}$-a.s.
\end{itemize}
\end{assumption}
It may be worth noticing that the above assumption allows for discontinuities of $u$ at corner points of $\cC$, i.e., $u$ may be discontinuous at points in $\partial\cC_T$. However, the process $Z$ does not visit those points with probability one. That is for a good reason, as illustrated in Examples \ref{ex:1}--\ref{ex:3} and in Section \ref{sec:suffcond}.
The condition $\Q_{t,z}((T,Z_T)\in\partial\cC_T)=0$ holds if for example $Z_T$ has a density with respect to the Lebesgue measure and $\partial\cC_T$ is a null set for such measure.

\begin{theorem}\label{thm:main}
Let Assumptions \ref{ass:ZD}, \ref{ass:L} and \ref{ass:u} hold. 
Then $u\in C^{1,2}(\cC_{[0,T)})$ with $u>0$ in $\cC$, $u=0$ in $\cD$ and the value function of problem \eqref{eq:P1} reads
\[
v(t,x)=-2\ln u(t,x),\quad\text{for $(t,x)\in\cO$}.
\] 
Setting 
\begin{equation}\label{eq:a*}
\alpha^*(t,x)\coloneqq \left\{
\begin{array}{cl}
-\frac{1}{2}\sigma^\top(t,x)\nabla u(t,x)/u(t,x),& (t,x)\in \cC_{[0,T)},\\
0, & (t,x)\notin \cC_{[0,T)},
\end{array}
\right.
\end{equation}
the SDE
\begin{equation}\label{eq:X*}
X^*_s=x+\int_t^s [\mu(u,X^*_u)+\sigma(u,X^*_u)\alpha^*(u,X^*_u)]\ud u+\int_t^s \sigma(u,X^*_u)\ud W_u, \quad s\in[t,T],
\end{equation}
admits a unique strong solution $X^*$ for any $(t,x)\in \cC$.
Moreover, $X^*$ is an optimally controlled dynamics in $\cC$ $($i.e, $(s,X^*_s)\in\cC$ for all $s\in[t,T]$, $\P$-a.s.\ and $a^*_s\coloneqq\alpha^*(s,X^*_s)$ is a minimiser in \eqref{eq:P1}).
\end{theorem}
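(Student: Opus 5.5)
The argument has four steps: establish regularity of $u$ by local boundary value problems, pass to $v=-2\ln u$ via the logarithmic transformation, prove a verification inequality with a localisation argument, and construct and check the candidate optimal control. The main difficulty is the singularity of $\nabla u/u$ at $\partial\cC$.

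\emph{Regularity of $u$.} We have $u=0$ on $\cD$ because $\tau_\cD=t$ there. On $\cC$ we get $u>0$ from Assumption~\ref{ass:ZD} combined with Assumption~\ref{ass:u}(iii): on $\{T<\tau_\cD\}$ the exponential is strictly positive, and this event has positive probability. For $C^{1,2}$ regularity, fix a cylinder $U=U^B_{t_1,t_2}$ with $\overline U\subset\cC$ and $t_2<T$. Let $\varphi=u|_{\partial_P U}$, which is continuous by Assumption~\ref{ass:u}(i), and let $w$ be the classical solution given by Assumption~\ref{ass:L}. Let $\rho$ be the exit time of $(s,Z_s)$ from $U$. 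The strong Markov property at $\rho$ shows that $e^{-\frac12\int_t^{s\wedge\rho}f\,\ud r}u(s\wedge\rho,Z_{s\wedge\rho})$ is a martingale, since before $\rho$ the process has not hit $\cD$. Applying It\^o's formula to $w$ (with a further interior localisation to handle $C(\overline U)$) gives $w=\E^\Q_{t,z}[e^{-\frac12\int_t^\rho f}\varphi(\rho,Z_\rho)]=u$ in $U$. Covering $\cC_{[0,T)}$ by such cylinders yields $u\in C^{1,2}(\cC_{[0,T)})$ and
\[
\partial_t u+\cL u-\tfrac12 fu=0\quad\text{in }\cC_{[0,T)}.
\]

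\emph{Logarithmic transform and verification inequality.} Put $v=-2\ln u$ on $\cC$. A direct computation gives the HJB equation
\[
\partial_t v+\cL v+f+\min_{a}\{\langle\sigma a,\nabla v\rangle+|a|^2\}=0,
\]
with minimiser $a=-\tfrac12\sigma^\top\nabla v=\alpha^*$, because $\nabla v=-2\nabla u/u$. Now take any $a\in\cA^\cD_{t,x}$ with $\cJ_{t,x}(a)<\infty$, and let $\theta_n$ be the exit time of $(s,X_s)$ from $\{(s,y)\in\cC_{[0,T-1/n]}:\mathrm{dist}((s,y),\cD)>1/n,\ |y|<n\}$. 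It\^o's formula up to $\theta_n$, together with $v\ge0$ and $f\ge0$, gives
\[
v(t,x)\le \E\Big[\int_t^{\theta_n}(f+|a_s|^2)\ud s+v(\theta_n,X_{\theta_n})\Big].
\]
Admissibility means $(s,X_s)$ never touches $\cD$, so by path continuity $\theta_n\uparrow T$ and $X_{\theta_n}\to X_T$ with $(T,X_T)\in\cC_T$. There $u$ is continuous, and $u(T,\cdot)=e^{-g/2}$ on $\cC_T$, so $v(\theta_n,X_{\theta_n})\to g(X_T)$. To pass to the limit I would use the exponential form instead of Fatou. By It\^o, $e^{-\frac12\int f-\frac12\int|a|^2}u(s,X_s)$ is a local submartingale when $X$ is controlled. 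Applying Jensen in the form $\E[-2\ln(\cdot)]\ge -2\ln\E[\cdot]$ along the localisation gives $v\le \cJ_{t,x}(a)$.

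\emph{Optimal control: the main obstacle.} The drift $\sigma\alpha^*$ blows up near $\partial\cC$, so the key step is strong existence for \eqref{eq:X*} together with non-exit. On each compact subset of $\cC_{[0,T)}$, $\alpha^*$ is $C^{0,1}$ in $x$ because $u\in C^{1,2}$ and $u>0$. Hence locally Lipschitz coefficients give a unique strong solution up to $\theta_n$ for each $n$, and this pathwise uniqueness patches to a solution up to $\theta=\lim_n\theta_n$.

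To show the solution does not reach $\cD$, I would work under the law of $X^*$ up to $\theta_n$. It coincides with the Doob $h$-transform of the killed $Z$ with weight $M_s=e^{-\frac12\int_t^s f}u(s,Z_s)/u(t,x)$: Girsanov applies on $[t,\theta_n]$, where $\alpha^*$ is bounded, and $M$ is the density. Then
\[
\P(\theta_n<T,\ \theta_n\text{ due to approaching }\cD)=\E^\Q\big[M_{\theta_n}1_{\{\cdot\}}\big],
\]
which tends to $\E^\Q[M_{\tau_\cD\wedge T}1_{\{\tau_\cD\le T\}}]$. This limit vanishes because $u=0$ on $\cD$ and $u$ is continuous on $\overline\cC_{[0,T)}$. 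The terminal corner points $\partial\cC_T$ are handled by Assumption~\ref{ass:u}(ii). Explosion in $|y|$ is excluded by the same bound. Hence $(s,X^*_s)\in\cC$ on $[t,T]$ a.s., and after extension past $T$, $a^*\in\cA^\cD_{t,x}$.

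Finally, along $X^*$ the HJB minimum is attained, so the localised It\^o identity holds with equality. The same $h$-transform representation identifies $\E[v(\theta_n,X^*_{\theta_n})]$ with quantities converging to $\E[g(X^*_T)]$, by uniform integrability via the explicit density. This gives $\cJ_{t,x}(a^*)=v(t,x)$.
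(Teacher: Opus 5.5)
\textbf{Gap: the lower bound for an arbitrary admissible control.} Your proposal has a genuine gap in proving $-2\ln u(t,x)\le\cJ_{t,x}(a)$ for an arbitrary admissible $a$. This is precisely the half of the theorem saying that $-2\ln u$ is the \emph{infimum}.

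You correctly note that Fatou's lemma goes the wrong way when letting $n\to\infty$ in $\E[v(\theta_n,X_{\theta_n})]$, but the proposed replacement fails. Set $p_s\coloneqq(\sigma^\top\nabla u/u)(s,X_s)$ and $Y_s\coloneqq\e^{-\frac12\int_t^s(f(r,X_r)+|a_r|^2)\ud r}u(s,X_s)$. It\^o's formula and $\partial_t u+\cL u=\frac12 fu$ give
\[
\ud Y_s=\tfrac12 Y_s\big(|p_s|^2-|a_s-p_s|^2\big)\ud s+Y_s\langle p_s,\ud W_s\rangle .
\]
This drift has no sign. It is nonnegative for $a_s=p_s$ (the optimal feedback) and negative, e.g., for $a_s=3p_s\neq0$. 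So $Y$ is neither a local sub- nor a local supermartingale in general.

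Moreover, even a genuine submartingale would only give $u(t,x)\le\E[Y_{\theta_n}]$, i.e.\ a \emph{lower} bound on $-2\ln u(t,x)$. That cannot be chained with $-2\ln\E[Y]\le\E[-2\ln Y]$. The Jensen route needs $u(t,x)\ge\E[\,\cdot\,]$ for a nonnegative process, so that you can let $n\to\infty$ first and take logarithms afterwards. As written, your argument only yields $\cJ_{t,x}(a^*)=-2\ln u(t,x)$, not optimality of $a^*$.

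\textbf{How to close it.} The paper uses truncation. The function $V^\lambda\coloneqq-2\ln(u+\lambda)$ is bounded between $-2\ln(1+\lambda)$ and $-2\ln\lambda$. It solves the same HJB equation with running cost $\frac{u}{u+\lambda}f\le f$. The localised verification inequality therefore passes to the limit by dominated convergence, and $\lambda\downarrow0$ is then handled by monotone convergence.

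Your exponential idea can also be repaired by inserting the stochastic exponential $L_s\coloneqq\exp\big(-\int_t^s\langle a_r,\ud W_r\rangle-\frac12\int_t^s|a_r|^2\ud r\big)$. With $K_s\coloneqq\e^{-\frac12\int_t^s f(r,X_r)\ud r}u(s,X_s)$ one checks $\ud(L_sK_s)=L_sK_s\langle p_s-a_s,\ud W_s\rangle$. This is a nonnegative local martingale, hence a supermartingale. Fatou then gives $u(t,x)\ge\E\big[L_T\e^{-\frac12\int_t^Tf(r,X_r)\ud r-\frac12 g(X_T)}\big]$. Jensen, together with $\E[\int_t^T\langle a_r,\ud W_r\rangle]=0$ (valid once $\cJ_{t,x}(a)<\infty$), yields $-2\ln u(t,x)\le\cJ_{t,x}(a)$. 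You never need $L$ to be a true martingale.

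\textbf{The rest of your plan.} The remainder is a legitimate alternative to the paper. You bound the probability of leaving through $\cD$ or through $\{|y|=n\}$ via the $h$-transform density. This replaces the paper's Dynkin estimate $\P(\rho^*_\eps<T)\le V(t,x)/(-2\ln\eps)$ and its Lyapunov non-explosion argument with $\ln(1+|x|^2)$.

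You should make explicit that $M_{\theta_n}\cdot\Q$ coincides with the global measure $\M$ on $\cG^Z_{\theta_n}$. Then the existence of $X^*_T$ and $(T,X^*_T)\in\cC_T$ follow from $\M(T<\tau_\cD)=1$. Your uniform integrability claim for $v(\theta_n,X^*_{\theta_n})$ is correct, since $y\mapsto -y\ln y$ is bounded on $[0,1]$, although Fatou alone already gives $\cJ_{t,x}(a^*)\le-2\ln u(t,x)$.

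Finally, your identification $\alpha^*=-\frac12\sigma^\top\nabla v=\sigma^\top\nabla u/u$ is the right one. The factor $-\frac12$ in the displayed definition of $\alpha^*$ is a typo; the paper's proofs and examples use $\sigma^\top\nabla u/u$.
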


The proof of the theorem will be distilled in a few technical steps in Section \ref{sec:proof}. Sufficient conditions for the continuity of the function $u$ required in Assumption \ref{ass:u} are provided in Proposition \ref{prop:suffcond} in Section \ref{sec:suffcond}. By definition, $u=0$ on $\cD$ and therefore $v=+\infty$ on $\cD$. In the process of proving the theorem we also show that $v$ is the classical solution of a suitable HJB equation (cf.\ Corollary \ref{cor:V}).

Now we illustrate some explicit solutions. We emphasise that the value functions we obtain do not have a separable structure and it would therefore be difficult to solve directly the Hamilton-Jacobi-Bellman equation via an educated guess. In the examples we denote 
\[
\varphi(z)=(2\pi)^{-1/2}\exp(-\tfrac{1}{2}z^2)\quad\text{and}\quad \Phi(x)=\int_{-\infty}^x\varphi(z)\ud z.
\]
Throughout the next three examples we assume $\cO=[0,T]\times\R$, $\sigma\equiv 1$ and  $\mu\equiv f\equiv g\equiv 0$.

\begin{example}\label{ex:1}
For $\cD\coloneqq\{T\}\times(-\infty,0]$ the function $u$ from \eqref{eq:u} takes the simple form 
\[
u(t,x)=\Q(x+\bar W_{T-t} > 0)=\Phi\Big(\frac{x}{\sqrt{T-t}}\Big),\quad (t,x)\in\cO_{[0,T)},
\] 
and $u(T,x)=1_{(0,\infty)}(x)$.
Assumptions in Theorem \ref{thm:main} hold (notice the discontinuity of $u$ at $(t,x)=(T,0)$) and the value function in \eqref{eq:P1} reads
\[
v(t,x)=-2\ln \Phi\Big(\frac{x}{\sqrt{T-t}}\Big),\quad (t,x)\in\cO_{[0,T)},
\] 
with $v(T,x)=+\infty 1_{(-\infty,0]}(x)$, $x\in\R$.
Moreover, an optimal Markovian control is given by
\[
\alpha^*(t,x)=\frac{1}{\sqrt{T-t}}\frac{\varphi\Big(\frac{x}{\sqrt{T-t}}\Big)}{\Phi\Big(\frac{x}{\sqrt{T-t}}\Big)},\quad (t,x)\in\cO_{[0,T)}.
\]
\end{example}

\begin{figure}
\centering
\includegraphics[scale=.4]{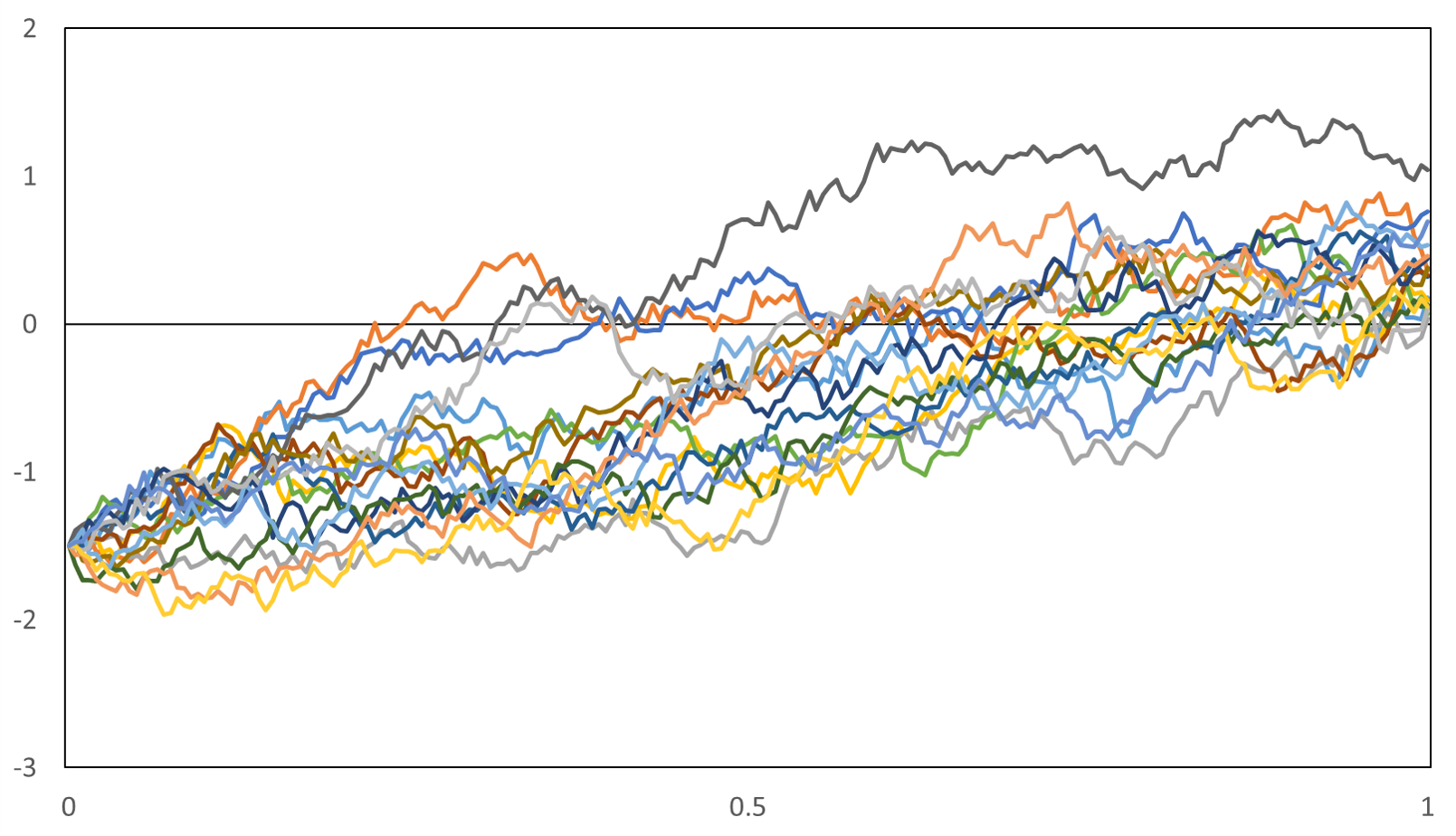}
\caption{Optimal trajectories for Example \ref{ex:1} with $T=1$, obtained by standard Euler-Maruyama method with $X_0^*=-1.5$ and time-step $\Delta t=0.005$.}
\end{figure}

\begin{example}\label{ex:2}
Take $\cD\coloneqq[0,T]\times(-\infty,0]$ and set
$\tau^x_0\coloneqq\inf\{s\ge 0 : x+\bar W_s\le 0\}$.
The function $u$ from \eqref{eq:u} reads $u(T,x)=1_{(0,\infty)}(x)$ for $x\in\R$, $u(t,x)=0$ for $(t,x)\in\cD_{[0,T)}$ and
\begin{eqnarray*}
u(t,x)&=&\Q(\tau^x_0>T-t) \\
&=& \Q\Big(\sup_{0\le s\le T-t}(-\bar W_s)<x\Big)=\Q(|\bar W_{T-t}|<x)\\
&=& 2\int_0^{x/\sqrt{T-t}}\varphi(z)\ud z=2\Phi\Big(\frac{x}{\sqrt{T-t}}\Big)-1,\quad (t,x)\in\cC_{[0,T)},
\end{eqnarray*}
where we used the well-known equality in law $|\bar W_{T-t}|\,{\buildrel d \over =}\,\sup_{0\le s\le T-t}(-\bar W_s)$. Also in this case the assumptions of Theorem~\ref{thm:main} hold (notice the discontinuity of $u$ at $(t,x)=(T,0)$) so the value of problem \eqref{eq:P1} reads
\[
v(t,x)=-2\ln\Big(2\Phi\Big(\frac{x}{\sqrt{T-t}}\Big)-1\Big),\quad (t,x)\in\cC_{[0,T)},
\] 
with $v(T,x)=+\infty 1_{(-\infty,0]}(x)$ for $x\in\R$ and $v(t,x)=+\infty$ for $(t,x)\in\cD_{[0,T)}$.
An optimal Markovian control reads 
\[
\alpha^*(t,x)=\frac{2}{\sqrt{T-t}}\frac{\varphi\Big(\frac{x}{\sqrt{T-t}}\Big)}{2\Phi\Big(\frac{x}{\sqrt{T-t}}\Big)-1},\quad (t,x)\in\cC_{[0,T)}.
\]
\end{example}

\begin{figure}
\centering
\includegraphics[scale=.4]{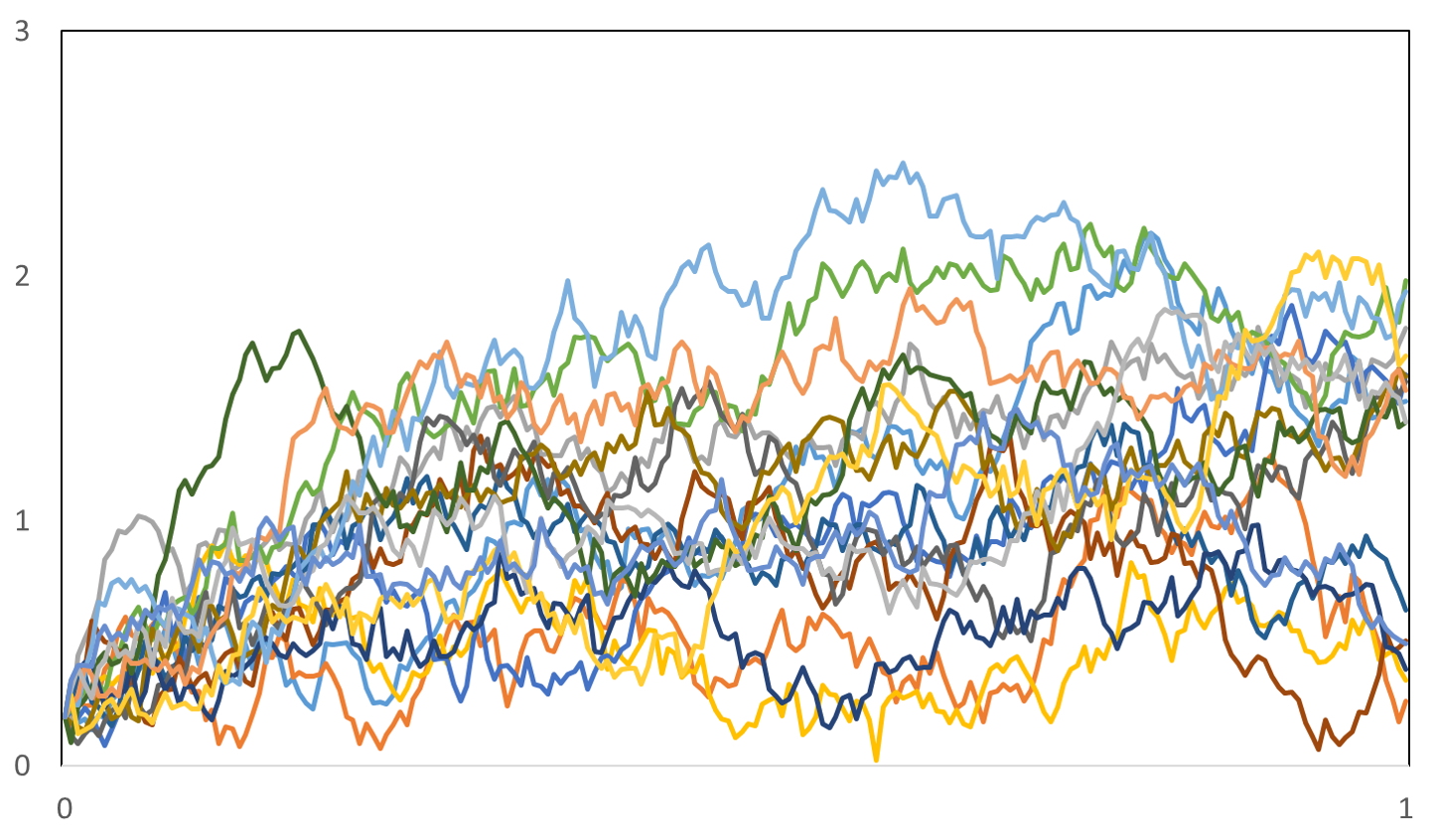}
\caption{Optimal trajectories for Example \ref{ex:2} with $T=1$, obtained by standard Euler-Maruyama method with $X_0^*=0.2$ and time-step $\Delta t=0.005$.}
\end{figure}

\begin{example}\label{ex:3}
In this final example we wish to illustrate a situation in which Assumption \ref{ass:u} is not immediately met but there is nevertheless a reduction of the problem to one that fits our framework. Let $\cD\coloneqq\{t_0\}\times[x_0,x_1]$ for some $t_0\in(0,T)$ and $x_0<x_1$. The function $u$ from \eqref{eq:u} can be computed as
\[
u(t,x)=
\left\{\begin{array}{cl}
1 & t>t_0,\ x\in\R,\\
1-1_{[x_0,x_1]}(x) & t=t_0,\ x\in\R, \\
\Phi\Big(\frac{x-x_1}{\sqrt{t_0-t}}\Big) + \Phi\Big(\frac{x_0-x}{\sqrt{t_0-t}}\Big) & t< t_0,\ x\in\R.
\end{array}
\right.
\]
Clearly $u\in C(\cC)$ but it fails to be continuous on $\overline\cC_{[0,T)}$. It is however important to notice that in the original control problem there is no need to exert control after time $t_0$. Then, $v(t,x)=0$ for $(t,x)\in(t_0,T]\times\R$. This indicates that we can restrict the problem to $[0,t_0]\times\R$ by simply replacing $T$ by $t_0$ in \eqref{eq:J}--\eqref{eq:P1}. By the same reasoning $\{T<\tau_\cD\}=\{t_0<\tau_\cD\}$ and we can replace the indicator function in the definition of $u$ in \eqref{eq:u}, restricting our analysis to $[0,t_0]\times\R$. This reduction puts us formally back into a framework that fits Assumption \ref{ass:u} because $u\in C([0,t_0]\times(\R\setminus\{x_0\}\cup\{x_1\}))$.
Theorem~\ref{thm:main} applies and yields $v=-2\ln u$. An optimal Markovian control reads as 
\[
\alpha^*(t,x)=\frac{1}{\sqrt{t_0-t}}\frac{\varphi\Big(\frac{x-x_1}{\sqrt{t_0-t}}\Big) - \varphi\Big(\frac{x_0-x}{\sqrt{t_0-t}}\Big)}{\Phi\Big(\frac{x-x_1}{\sqrt{t_0-t}}\Big) + \Phi\Big(\frac{x_0-x}{\sqrt{t_0-t}}\Big)}, \quad (t,x)\in[0,t_0)\times\R.
\]
\end{example}

The last example illustrates a methodology that could be extended to more general situations by the simple heuristic observation that, for any $t_0\in[t,T)$, the Dynamic Programming Principle yields
\[
v(t,x)=\inf_{a\in\cA^{\cD}_{t,x}}\E\Big[\int_t^{t_0}\big(f(s,X^{t,x;a}_s)+\big|a_s\big|^2_{d'}\big)\ud s+v(t_0,X^{t,x;a}_{t_0})\Big].
\]
The associated function $u$ reads
\[
u(t,z)=\E_{t,z}\Big[\exp\Big(-\frac12\int_t^{t_0}f(s,Z_s)\ud s-\frac12 v(t_0,Z_{t_0})\Big)1_{\{t_0<\tau_{\cD_0}\}}\Big],
\]
where $\tau_{\cD_0}=\inf\{s\in[t,t_0]:(s,Z_s)\in\cD\}$, with $\inf\varnothing =+\infty$. If for some reason the function $v(t_0,x)$ is known (e.g., it is obtained by solving the problem on $(t_0,T]\times\R^d$) and the function $u$ defined above satisfies Assumption \ref{ass:u} on the restricted domain $[0,t_0]\times\R^d$, then we can solve the problem on that domain using Theorem \ref{thm:main}.

\begin{figure}
\centering
\includegraphics[scale=.4]{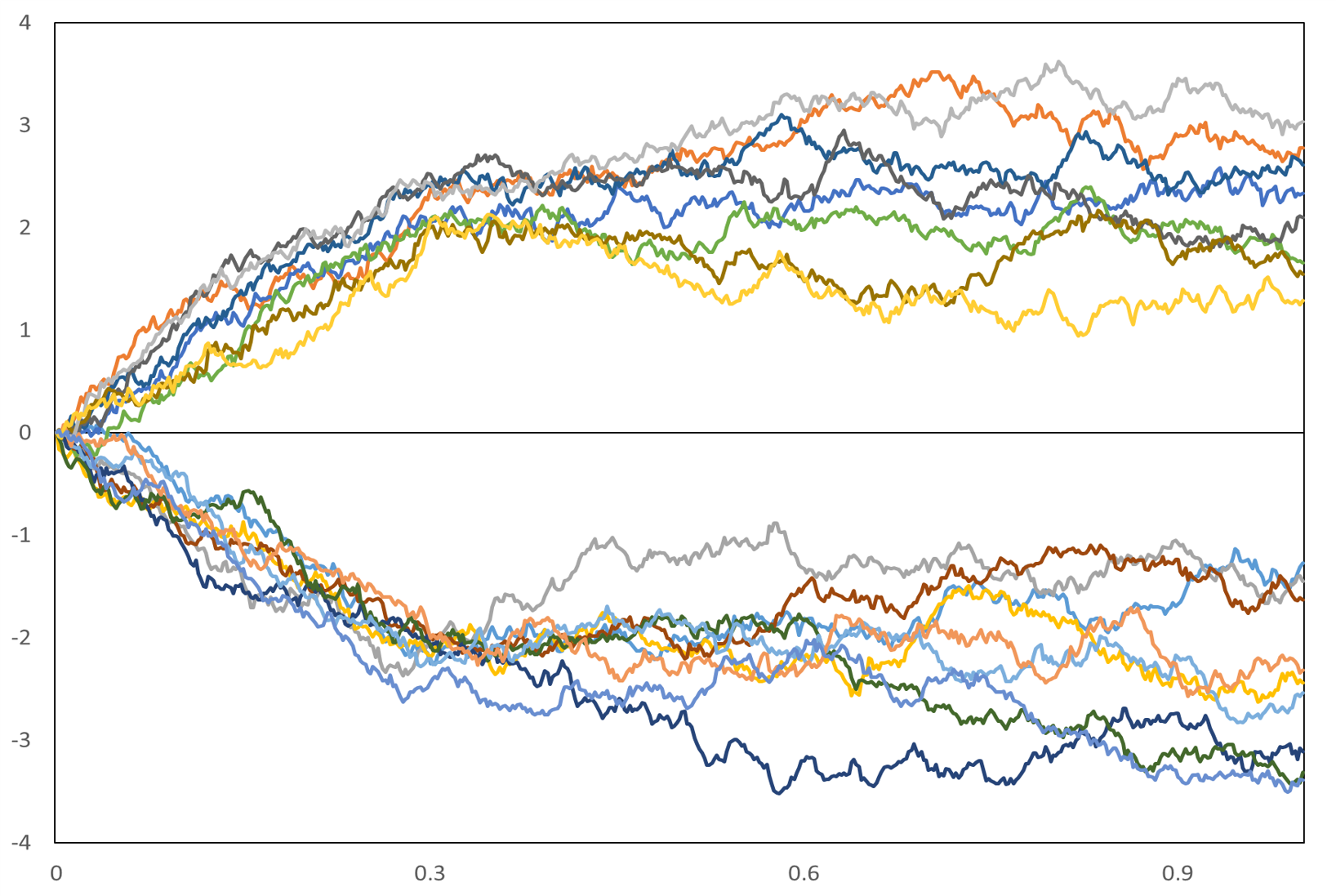}
\caption{Optimal trajectories for Example \ref{ex:3} with $T=1$, $x_0=-2$, $x_1=2$, $t_0=0.2$, obtained by standard Euler-Maruyama method with $X_0^*=0$ and time-step $\Delta t=0.005$.}
\end{figure}

\begin{remark}\label{rem:doob}
It is worth observing that the structure of the optimally controlled dynamics in \eqref{eq:X*} is formally analogous to the so-called Doob's {\em $h$-transform} (see, e.g., \cite{RW1} and \cite{RW2}). 
Fix a (smooth) domain $\Sigma\subset[0,T]\times\R^d$ and consider a Markov process $Y\in\R^d$ on a probability space $(\Omega,\cF,(\cF_t)_{t\in[0,T]},\P)$. The $h$-transform $\hat Y$ of the process $Y$ is obtained by conditioning the process $(t,Y)$ to exit $\Sigma$ via a specific portion of its boundary $\partial \Sigma_0\subset\partial \Sigma$. 

In the special case of $Y$ being solution to a SDE with drift and diffusion coefficients $\mu$ and $\sigma$, the constrained dynamics $\hat Y$ is given by 
\[
\ud \hat Y_t=\big(\mu(t,\hat Y_t)+(\sigma\sigma^\top)(t,\hat Y_t)\tfrac{\nabla h(t,\hat Y_t)}{h(t,\hat Y_t)}\big)\ud t+\sigma(t,\hat Y_t)\ud W_t,
\]
where, at least formally, 
\[
h(t,y)=\P_{t,y}\big((\tau_\Sigma,Y_{\tau_\Sigma})\in \partial \Sigma_0\big),
\] 
with $\tau_\Sigma=\inf\{s\ge t: (s,Y_s)\notin \Sigma\}$. Of course here some care is needed in order to guarantee that $h(t,y)$ is sufficiently smooth and the constrained SDE is well-posed. However, the main conceptual point is that the function $h$ should be harmonic for the process $(t,Y)$, strictly positive inside $\Sigma$ with $h|_{\partial \Sigma_0}=1$ and $h|_{\partial \Sigma\setminus\partial \Sigma_0}=0$. 
In our case $\Sigma=\cC$ and $\partial\Sigma_0=\cC_T$. We observe that our function $u$ performs an analogue task as the $h$-transform but in a stochastic control setting involving running cost and terminal cost. When $f=g\equiv 0$ we recover the $h$-transform as illustrated in the examples above.

Finally, we observe that the well-known Brownian bridge can be obtained as Doob's $h$-transform of a time-space Brownian motion, constrained to leave $\Sigma=[0,T]\times\R$ from the boundary point $\partial\Sigma_0=(T,0)$. As pointed out in \cite{FSh}, this type of process {\em cannot} be obtained as the optimal dynamics of a linear-quadratic stochastic control problem, because the drift of a Brownian bridge is not square integrable and it would yield an infinite cost in \eqref{eq:P1}. That also tallies with the fact that Assumption \ref{ass:ZD} fails.
\end{remark}

\begin{remark}\label{rem:risksensitive}
If we remove the state constraint, i.e., if we take $\cD=\varnothing$ (or more generally $\cD$ such that $\Q_{t,z}(\tau_\cD=\infty)=1$), then we fall back into the theory of logarithmic transformations of risk-sensitive functionals. In particular, the function $u$ in \eqref{eq:u} reduces to
\[
u(t,z)=\E^\Q_{t,z}\Big[\exp\Big(-\frac{1}{2}\int_t^T f(s,Z_s)\ud s-\frac{1}{2}g(Z_T)\Big)\Big]
\]
and it is well-known (see, e.g., \cite{BD} and \cite[Ch.\ VI]{FS}) that $v=-2\ln u$ is the value function of the unconstrained control problem 
\[
v(t,x)=\inf_{a\in\cA_{t,x}}\E\left[\int_t^{T}\!\!\big(f(s,X^{t,x;a}_s)+|a_s|^2_{d'}\big)\ud s+g(X^{t,x;a}_T)\right],
\]
where $\cA_{t,x}\coloneqq\cA^{\varnothing}_{t,x}$, in our notation from Definition \ref{def:admissible}. 
\end{remark}

\section{Proof of Theorem \ref{thm:main}}\label{sec:proof}

Here we provide the proof of Theorem \ref{thm:main} which is enabled by a series of results. Throughout the section we enforce Assumptions \ref{ass:ZD}, \ref{ass:L} and \ref{ass:u}. 

Recall the function 
\[u(t,z)=\E_{t,z}^\Q\Big[\exp\Big(-\frac{1}{2}\int_t^T f(s,Z_s)\ud s-\frac{1}{2}g(Z_T)\Big)1_{\{T<\tau_\cD\}}\Big]\] 
defined in \eqref{eq:u}. It is clear that $u\equiv 0$ on $\cD$, and $u(T,z)=\e^{-\frac{1}{2}g(z)}$ for $(T,z)\in\cC_T$. 

\begin{lemma}\label{lem:u}
The function $u$ satisfies $u\in C^{1,2}(\cC_{[0,T)})$, and it solves 
\begin{equation}\label{eq:PDEu}
\begin{array}{ll}
\partial_t u(t,z)+(\cL u)(t,z)-\frac{1}{2}f(t,z)u(t,z)=0,& (t,z)\in \cC_{[0,T)}.
\end{array}
\end{equation}
Moreover,
\begin{equation}\label{eq:u>0}
u(t,z)>0\iff (t,z)\in\cC.
\end{equation}
\end{lemma}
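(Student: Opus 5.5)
The plan is to prove regularity locally, through the strong Markov property on small cylinders contained in $\cC$, combined with the uniqueness in Assumption \ref{ass:L}. Fix $(t_0,z_0)\in\cC_{[0,T)}$. Since $\cC$ is relatively open in $\cO$, there is a cylinder $U=U^B_{t_1,t_2}$ with $t_1<t_0<t_2<T$, $z_0\in B$ and $\overline U\subset\cC$.

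Let $\varphi\coloneqq u|_{\partial_P U}$. This restriction is continuous by Assumption \ref{ass:u}(i), since $\partial_P U\subset\cC_{[0,T)}\subset\cO\setminus\partial\cC_T$. Let $w$ be the classical solution given by Assumption \ref{ass:L}. For $(t,z)\in U$ define $\rho\coloneqq\inf\{s\ge t:(s,Z_s)\notin U\}$; this is the exit time through $\partial_P U$, and $\rho\le t_2$. Because $\overline U\subset\cC$, we have $\rho<\tau_\cD$. The strong Markov property of $Z$ at $\rho$, together with the multiplicative structure of the exponential functional, gives
\[
u(t,z)=\E^\Q_{t,z}\Big[\e^{-\frac12\int_t^\rho f(s,Z_s)\ud s}\,u(\rho,Z_\rho)\Big].
\]
To see this, split $\int_t^T=\int_t^\rho+\int_\rho^T$ and use $1_{\{T<\tau_\cD\}}=1_{\{T<\tau_\cD\circ\theta_\rho\}}$ on $\{\rho<\tau_\cD\}$.

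Next apply Itô's formula to $\e^{-\frac12\int_t^s f\ud r}\,w(s,Z_s)$ up to $\rho\wedge\rho_n$, where $\rho_n$ is the exit time from slightly shrunk cylinders $U_n\uparrow U$. On $\overline{U_n}$ the function $w$ is $C^{1,2}$ with bounded derivatives, so the stochastic integral is a true martingale and the PDE removes the drift term. Let $n\to\infty$. Continuity of $w$ on $\overline U$, boundedness of $w$ on the compact set $\overline U$ and bounded convergence (recall $f\ge0$) then give
\[
w(t,z)=\E^\Q_{t,z}\Big[\e^{-\frac12\int_t^\rho f\ud s}\,\varphi(\rho,Z_\rho)\Big]=u(t,z).
\]
Hence $u=w$ on $U$, which shows $u\in C^{1,2}$ near $(t_0,z_0)$ and that \eqref{eq:PDEu} holds there. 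The delicate point is that $f$ is only measurable; this is harmless because Assumption \ref{ass:L} already presupposes that the equation makes sense and has a classical solution. One must also check $\rho_n\uparrow\rho$, which holds because $U$ is an open cylinder (open in the space variable, with the top face included in the parabolic boundary).

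For \eqref{eq:u>0}: $u=0$ on $\cD$ is immediate, since $\tau_\cD=t\le T$ there. Now take $(t,z)\in\cC$. Assumption \ref{ass:ZD} gives $\Q_{t,z}(\tau_\cD=\infty)>0$, and $\{\tau_\cD=\infty\}\subseteq\{T<\tau_\cD\}$. On this event the integrand $\exp(-\frac12\int f-\frac12 g(Z_T))$ is strictly positive a.s., because $\int_t^Tf(s,Z_s)\ud s<\infty$ by Assumption \ref{ass:u}(iii) and $g$ is finite-valued. The expectation of a random variable that is strictly positive on a set of positive probability is strictly positive, so $u(t,z)>0$. When $t=T$ this reduces directly to $u(T,z)=\e^{-g(z)/2}>0$.

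The main obstacle I anticipate is the identification $u=w$, in particular justifying the strong Markov decomposition. This requires measurability of $u$ and the shift identity for $\tau_\cD$ on $\{\rho<\tau_\cD\}$, as well as the localisation needed for Itô's formula since $w$ is only $C^{1,2}$ in the interior. I would address the localisation with the shrinking cylinders $U_n$ described above.
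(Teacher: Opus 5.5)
Your proposal is correct and follows essentially the same route as the paper. The strong Markov property on a cylinder $U$ with $\overline U\subset\cC$ gives $u(t,z)=\E^\Q_{t,z}\big[\e^{-\frac12\int_t^\rho f(s,Z_s)\,\ud s}\,u(\rho,Z_\rho)\big]$. Assumption \ref{ass:L} with boundary data $u|_{\partial_P U}$, together with Dynkin's formula, identifies $u$ with the classical solution $w$. Positivity follows from Assumptions \ref{ass:ZD} and \ref{ass:u}-(iii). Your shrinking cylinders $U_n\uparrow U$ simply spell out the localisation behind the Dynkin step that the paper states without detail.
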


\begin{proof}
Condition \eqref{eq:u>0} follows by definition of $u$, continuity of paths of $(s,Z_s)$, Assumption \ref{ass:ZD} and Assumption \ref{ass:u}--(iii). 

Fix a point $(t,z)\in\cC_{[0,T)}$. Let $B\subset \R^d$ be an open ball and set $U=[t_1,t_2)\times B$ with $(t,z)\in U$ and $\overline U\subseteq \cC$.
Denote $\rho\coloneqq\inf\{s\geq t:(s,Z_s)\notin U\}$ the first exit time of $(s,Z_s)$ from $U$, and note that $\rho\leq \tau_\cD$ with $\tau_\cD$ as in \eqref{eq:tauD}.
Let $(\cG^Z_s)_{s\in [t,T]}$ be the filtration generated by the process $Z$, so that $Z$ is strong Markov for such filtration. 
Then $\rho$ is a $(\cG^Z_s)$-stopping time, and the strong Markov property yields
\begin{equation}\label{eq:Ymg}
\begin{aligned}
&\E^\Q_{t,z}\Big[\e^{-\frac{1}{2}\int_t^{\rho} f(r,Z_r)\ud r}u(\rho,Z_{\rho})\Big]\\
&=\E^\Q_{t,z}\Big[\e^{-\frac{1}{2}\int_t^{\rho} f(r,Z_r)\ud r}\E^\Q_{\rho,Z_{\rho}}\Big[\e^{-\frac{1}{2}\int_\rho^T f(s,Z_s)\ud s-\frac{1}{2}g(Z_T)}1_{\{T<\tau_\cD\}}\Big]\Big]\\
&=\E^\Q_{t,z}\Big[\e^{-\frac{1}{2}\int_t^{\rho} f(r,Z_r)\ud r}\E^\Q_{t,z}\Big[\e^{-\frac{1}{2}\int_\rho^T f(s,Z_s)\ud s-\frac{1}{2}g(Z_T)}1_{\{T<\tau_\cD\}}\Big|\cG^Z_\rho\Big]\Big]\\
&=\E^\Q_{t,z}\Big[\e^{-\frac{1}{2}\int_t^T f(s,Z_s)\ud s-\frac{1}{2}g(Z_T)}1_{\{T<\tau_\cD\}}\Big]=u(t,z).
\end{aligned}
\end{equation}

Since $u\in C(\overline U)$ by Assumption \ref{ass:u}, there exists a unique $w\in C(\overline U)\cap C^{1,2}(U)$ that solves
\begin{equation}
\left\{\begin{array}{ll}
\partial_t w(t,z)+(\cL w)(t,z)-\frac{1}{2}f(t,z)w(t,z)=0,& (t,z)\in U,\\[+5pt]
w(t,z)=u(t,z),& (t,z)\in \partial_P U,
\end{array}\right.
\end{equation}
thanks to Assumption \ref{ass:L}.
By Dynkin's formula, for any $(t,z)\in U$ we have 
\begin{equation}
w(t,z)=\E^\Q_{t,z}\Big[\e^{-\frac{1}{2}\int_t^{\rho} f(r,Z_r)\ud r}w(\rho,Z_{\rho})\Big]
=\E^\Q_{t,z}\Big[\e^{-\frac{1}{2}\int_t^{\rho} f(r,Z_r)\ud r}u(\rho,Z_{\rho})\Big]=u(t,z),
\end{equation}
where the second equality holds because $(\rho,Z_\rho)\in\partial_P U$ and the final equality holds by \eqref{eq:Ymg}.
Since $U$ is arbitrary, \eqref{eq:PDEu} holds. 
\end{proof}

Let us introduce the {\em Hamiltonian} $\cH:[0,T]\times\R^d\times\R^d\times\R^{d\times d}\to\R$, defined as
\[\cH(t,x,p,\gamma)\coloneqq \tfrac{1}{2}\mathrm{tr}(\sigma\sigma^\top \gamma)(t,x)+\langle \mu,p \rangle(t,x)
+\inf_{\alpha\in\R^{d'}}\big(\langle \sigma\alpha,p\rangle+|\alpha|^2_{d'}\big)(t,x).
\]
Now we have a simple corollary of the lemma above.
\begin{corollary}\label{cor:V}
Set $V(t,x)\coloneqq-2\ln u(t,x)$ for $(t,x)\in \cO$. Then $|V(t,x)|<\infty$ for $(t,x)\in\cC$ and $V(t,x)=+\infty$ for $(t,x)\in\cD$. Moreover, $V\in C^{1,2}(\cC_{[0,T)})\cap C(\overline{\cC}_{[0,T)}\cup\cC_T)$ and it satisfies the Hamilton-Jacobi-Bellman equation
\begin{equation}\label{eq:HJB}
\left\{
\begin{array}{ll}
\partial_t V(t,x)+\cH(t,x,\nabla V(t,x), D^2 V(t,x)) + f(t,x)=0, & (t,x)\in\cC_{[0,T)},\\[+5pt]
V(T,x)=g(x), & (T,x)\in\cC_T.
\end{array}
\right.
\end{equation}
\end{corollary}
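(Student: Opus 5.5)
The plan is to read everything off Lemma \ref{lem:u} through the logarithmic transformation $V=-2\ln u$.

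\emph{Finiteness and continuity.} By \eqref{eq:u>0}, $u>0$ on $\cC$ and $u=0$ on $\cD$. Since $u\le 1$, we get $0\le V<\infty$ on $\cC$ and $V=+\infty$ on $\cD$. By Lemma \ref{lem:u}, $u\in C^{1,2}(\cC_{[0,T)})$ and is strictly positive there, so $V\in C^{1,2}(\cC_{[0,T)})$ by composition with the smooth function $-2\ln(\cdot)$ on $(0,\infty)$. For continuity on $\overline\cC_{[0,T)}\cup\cC_T$ I use Assumption \ref{ass:u}(i): $u$ is continuous on that set, taking values in $[0,1]$. Hence $V$ is continuous there as a map into $[0,\infty]$, with $V=+\infty$ exactly at the boundary points where $u=0$. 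I read the stated continuity in this extended sense, which is what the singular boundary condition means. The terminal condition is immediate: on $\cC_T$ we have $T<\tau_\cD$ a.s. under $\Q_{T,z}$, so $u(T,z)=\e^{-g(z)/2}$ and thus $V(T,z)=g(z)$.

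\emph{The HJB equation.} On $\cC_{[0,T)}$ write $u=\e^{-V/2}$. Then
\[
\partial_t u=-\tfrac12 u\,\partial_t V,\qquad \nabla u=-\tfrac12 u\nabla V,\qquad D^2u=u\big(-\tfrac12 D^2V+\tfrac14\nabla V\nabla V^\top\big).
\]
Substitute these into \eqref{eq:PDEu} and divide by $-u/2\neq0$. This gives
\[
\partial_t V+\tfrac12\mathrm{tr}(\sigma\sigma^\top D^2V)+\langle\mu,\nabla V\rangle-\tfrac14|\sigma^\top\nabla V|^2+f=0 .
\]
It remains to compute the infimum in $\cH$. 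The map $\alpha\mapsto\langle\sigma\alpha,p\rangle+|\alpha|^2=\langle\alpha,\sigma^\top p\rangle+|\alpha|^2$ is strictly convex. It is minimised at $\alpha=-\tfrac12\sigma^\top p$, with minimum value $-\tfrac14|\sigma^\top p|^2$. With $p=\nabla V$, the displayed equation is exactly \eqref{eq:HJB}.

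The minimiser $-\tfrac12\sigma^\top\nabla V=\sigma^\top\nabla u/u$ is worth recording for later, since it connects to $\alpha^*$ in \eqref{eq:a*}. The only point needing care is the boundary continuity in the extended sense; everything else is direct computation.
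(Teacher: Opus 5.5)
Your proof is correct and follows the paper's argument. Finiteness comes from \eqref{eq:u>0}, and smoothness and continuity are inherited from $u$. The substitution $u=\e^{-V/2}$ into \eqref{eq:PDEu} gives \eqref{eq:PDEV}, and the explicit minimisation \eqref{eq:Ham} identifies the Hamiltonian. Your remark that continuity on $\overline{\cC}_{[0,T)}$ must be read in the extended sense, since $V=+\infty$ at points of $\overline{\cC}_{[0,T)}\cap\cD$, is a fair clarification of a point the paper leaves implicit.
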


\begin{proof}
Finiteness of $V$ follows from \eqref{eq:u>0} and its continuity/smoothness in $\overline{\cC}_{[0,T)}\cup\cC_T$ from that of $u$. Moreover, $V=+\infty$ in $\cD$, because $u=0$ in $\cD$. Differentiating $V$ and using \eqref{eq:PDEu} we obtain
\begin{equation}\label{eq:PDEV}
\partial_t V(t,x)+(\cL V)(t,x)-\tfrac{1}{4}\big|\sigma^\top(t,x)\nabla V(t,x) \big|^2_d+f(t,x)=0,
\end{equation}
for every $(t,x)\in \cC_{[0,T)}$.
Then, \eqref{eq:HJB} is obtained upon noticing that 
\begin{equation}\label{eq:Ham}
-\tfrac{1}{4}\big|\sigma^\top\nabla V \big|^2(t,x)=\inf_{\alpha\in\R^{d'}}\big(\langle \sigma\alpha,\nabla V\rangle+|\alpha|^2_{d'}\big)(t,x).
\end{equation}
The expression for $V(T,x)$ holds by definition of $u(T,x)$. 
\end{proof}

Another consequence of Lemma \ref{lem:u} is the existence of a (local) strong solution of the SDE in \eqref{eq:X*}, that rules the optimal dynamics in Theorem \ref{thm:main}. 
Recall that $\cD=\{(t,x):u(t,x)=0\}$, and for $\eps >0$ let us introduce the sets 
\[
 \ \cC_\eps\coloneqq\{(t,x):u(t,x)>\eps\}\quad\text{and}\quad \cD_\eps\coloneqq \cO\setminus\cC_\eps.
\]
For sufficiently small $\eps>0$ the set $\cC_{\eps}$ is non-empty and contained in $\cC$. 
Since  $\cD\subseteq\cD_{\eps'}\subseteq\cD_{\eps}$ for any $\eps'\le \eps$, the limit $\cD_{0}\coloneqq\lim_{\eps\downarrow 0}\cD_{\eps}$ is well-defined and it is easy to see that $\cD_{0}= \cD$. 
Also, for $\eps,\delta>0$ we set 
\[\cC_{\eps,\delta}\coloneqq\{(t,x)\in\cO:u(t,x)>\eps\}\cap \{(t,x)\in\cO:\vert x\vert< \delta^{-1}\}.\]
Thanks to Assumption \ref{ass:u}--(i), for any $\eps, \delta, \eta>0$ 
\begin{equation}\label{eq:distance}
\inf_{\substack{(t,x)\in\cC_{\eps,\delta}\\ 0\le t\le T-\eta}}\mathrm{dist}\big((t,x),\cD\big)>0.
\end{equation}

\begin{proposition}\label{prop:strong}
Fix $(t,x)\in \cC$ and recall $\alpha^*$ as in \eqref{eq:a*}. For any $\eps,\delta,\eta>0$ there exists a unique strong solution $X^*$ of 
\begin{equation}\label{eq:X*locEE}
\begin{aligned}
X^*_{s\wedge\rho^*_{\eps,\delta,\eta}}&=x\!+\!\int_t^{s\wedge\rho^*_{\eps,\delta,\eta}}\!\big[\mu(u,X^*_u)\!+\!\sigma(u,X^*_u)\alpha^*(u,X^*_u)\big]\ud u\!+\!\int_t^{s\wedge\rho^*_{\eps,\delta,\eta}}\!\! \sigma(u,X^*_u)\ud W_u, 
\end{aligned}
\end{equation}
for $s\in[t,T]$, where 
\[\rho^*_{\eps,\delta,\eta}\coloneqq \inf\{r\in[t,T-\eta]:(r,X^*_r)\notin \cC_{\eps,\delta}\}\wedge (T-\eta),\]
with the usual convention $\inf\varnothing=\infty$.
\end{proposition}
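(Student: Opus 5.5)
The plan is to reduce the stopped SDE \eqref{eq:X*locEE} to an SDE with locally Lipschitz coefficients of linear growth on all of $\cO$, by modifying $\alpha^*$ outside a neighbourhood of $\cC_{\eps,\delta}\cap\{t\le T-\eta\}$. Standard theory (Itô's theorem under local Lipschitz and linear growth) then gives a unique global strong solution, and stopping it at the corresponding exit time will give a solution of \eqref{eq:X*locEE}.

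\textbf{Step 1: a compact set where $\alpha^*$ is regular.} By \eqref{eq:distance}, the set
\[
K\coloneqq\overline{\cC_{\eps,\delta}\cap\{t\le T-\eta\}}
\]
is compact, since it is bounded in $x$ by $\delta^{-1}$. It lies at positive distance $r>0$ from $\cD$, and also from $\{T\}\times\R^d$ because $t\le T-\eta$. Let
\[
K'\coloneqq\{(s,y):\mathrm{dist}((s,y),K)\le r/2\}\cap\{s\le T-\eta/2\}.
\]
Then $K'$ is a compact subset of $\cC_{[0,T)}$. By Lemma \ref{lem:u}, $u\in C^{1,2}(\cC_{[0,T)})$ and $u>0$ on $\cC$. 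Hence on $K'$ we have $u\ge c>0$, and $\nabla u$ and $D^2u$ are continuous, so bounded. It follows that $\nabla u/u$ is $C^1$ in $x$ with bounded derivative on $K'$, and is therefore Lipschitz in $x$ on convex pieces of $K'$. Combined with the local Lipschitz property of $\sigma$, this makes $\sigma\alpha^*=-\frac12\sigma\sigma^\top\nabla u/u$ locally Lipschitz in $x$ and bounded on a neighbourhood of $K$, uniformly in $t$.

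\textbf{Step 2: cut-off and global SDE.} Take a smooth cutoff $\chi:\cO\to[0,1]$ with $\chi=1$ on $K$ and $\mathrm{supp}\,\chi\subset K'$; for instance, mollify the indicator of the $r/4$-neighbourhood of $K$. Set $\tilde\alpha\coloneqq\chi\alpha^*$. Then $\sigma\tilde\alpha$ is bounded and locally Lipschitz in $x$ uniformly in $t$, and it vanishes outside the compact set $K'$. Consequently $\tilde\mu\coloneqq\mu+\sigma\tilde\alpha$ and $\sigma$ are locally Lipschitz with linear growth, and the SDE with coefficients $(\tilde\mu,\sigma)$ started at $(t,x)$ has a unique strong solution $\tilde X$ on $[t,T]$, adapted to the Brownian filtration.

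\textbf{Step 3: existence and uniqueness for the stopped equation.} Define $\tilde\rho$ as $\rho^*_{\eps,\delta,\eta}$ with $X^*$ replaced by $\tilde X$, and set $X^*_s\coloneqq\tilde X_{s\wedge\tilde\rho}$. For $u<\tilde\rho$ the point $(u,\tilde X_u)$ lies in $\cC_{\eps,\delta}$ with $u\le T-\eta$, hence in $K$, where $\tilde\alpha=\alpha^*$. So $X^*$ solves \eqref{eq:X*locEE} with $\rho^*=\tilde\rho$. For uniqueness, let $Y$ be any strong solution of \eqref{eq:X*locEE} with its own stopping time $\rho_Y$. On $[t,\rho_Y)$ the process $Y$ stays in $K$, so it also solves the stopped $\tilde X$-equation. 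Pathwise uniqueness for the locally Lipschitz SDE (Gronwall up to localising times) then gives $Y=\tilde X$ on $[t,\rho_Y]$, which forces $\rho_Y=\tilde\rho$ and $Y=X^*$.

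\textbf{Main obstacle.} The delicate point is Step 1, in particular checking that the Lipschitz bound on $\nabla u/u$ is uniform in $t$ on $K'$. This matters because \eqref{eq:distance} only controls distances in the time–space metric, and $u$ is merely continuous up to $\cC_T$. Keeping $t\le T-\eta/2$ and using compactness of $K'$ inside the open set where $u\in C^{1,2}$ handles this, since continuity of $D^2u$ on a compact set gives the uniform bound. One must also ensure that the cutoff preserves the local Lipschitz property near the boundary of $K'$. This holds because $\chi$ is smooth and $\alpha^*$ is $C^1$ in $x$ on $K'$.
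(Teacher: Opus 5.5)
Your proposal is correct and follows the same route as the paper. The paper's proof simply notes that $\mu$, $\sigma$ and $\alpha^*$ are Lipschitz on $\cC_{\eps,\delta}\cap([0,T-\eta]\times\R^d)$ by \eqref{eq:distance} and Lemma~\ref{lem:u}, then invokes classical SDE theory; your compact neighbourhood $K'$, cut-off extension and stopped pathwise-uniqueness argument spell out exactly that appeal.
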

\begin{proof}
For any $\eps,\delta,\eta>0$ the functions $\mu(t,x)$, $\sigma(t,x)$ and  $\alpha^*(t,x)=\sigma^\top(t,x)(\nabla u/u)(t,x)$ are Lipschitz on $\cC_{\eps,\delta}\cap ([0,T-\eta]\times \R^d)$, thanks to \eqref{eq:distance} and Lemma \ref{lem:u}.
Then the result follows by classical theory for SDEs.
\end{proof}

For fixed $\eps,\delta>0$, the time $\rho^*_{\eps,\delta,\eta}$ increases as $\eta\downarrow 0$, and the solution $X^*$ can be extended so that 
\begin{equation}\label{eq:X*loc}
\begin{aligned}
X^*_{s\wedge\rho^*_{\eps,\delta}}&=x\!+\!\int_t^{s\wedge\rho^*_{\eps,\delta}}\!\big[\mu(u,X^*_u)\!+\!\sigma(u,X^*_u)\alpha^*(u,X^*_u)\big]\ud u\!+\!\int_t^{s\wedge\rho^*_{\eps,\delta}}\!\! \sigma(u,X^*_u)\ud W_u
\end{aligned}
\end{equation}
for $s\in[t,T)$, where 
\begin{equation}\label{eq:sigma**}
\rho^*_{\eps,\delta}\coloneqq \lim_{\eta\downarrow 0}\rho^*_{\eps,\delta,\eta} = \inf\{r\in[t,T):(r,X^*_r)\notin \cC_{\eps,\delta}\}\wedge T.
\end{equation}

We next provide a pathwise uniqueness result 
on the closed time interval $[t,T]$. 

\begin{proposition}\label{prop:pathwise}
Let $\alpha:\cC_{[0,T)}\to\R^{d'}$ be measurable and locally Lipschitz in the spatial variable. For $(t,x)\in\cC$, there can be at most one strong solution of the SDE
\begin{equation}\label{eq:Xaa}
X_s=x+\int_t^s \big[\mu(u,X_u)+\sigma(u,X_u)\alpha(u,X_u)\big]\ud u+\int_t^s \sigma(u,X_u)\ud W_u,\quad s\in[t,T],
\end{equation}
with 
\begin{equation}\label{eq:nonexpl}
\P\big((s,X_s)\in\cC,\,\forall s\in[t,T]\big)=1.
\end{equation} 
\end{proposition}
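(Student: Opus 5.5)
Let $X$ and $Y$ be two strong solutions of \eqref{eq:Xaa} on the same filtered probability space with the same Brownian motion $W$, both satisfying \eqref{eq:nonexpl}. The plan is a localisation argument. On compact subsets of $\cC_{[0,T)}$ the coefficients are Lipschitz, so classical pathwise uniqueness applies there. We then pass to the limit and use the constraint \eqref{eq:nonexpl} to show that the localising times exhaust $[t,T]$.

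\textbf{Step 1: localisation.} Let $(K_n)_{n\ge1}$ be an increasing sequence of compact subsets of $\cC_{[0,T)}$ with $\bigcup_n K_n=\cC_{[0,T)}$. One concrete choice is
\[
K_n=\{(s,y)\in[0,T-1/n]\times\R^d:\ |y|\le n,\ \mathrm{dist}((s,y),\cD)\ge 1/n\}.
\]
Each $K_n$ is compact because $\cD$ is closed. Set
\[
\theta_n\coloneqq\inf\{s\ge t:(s,X_s)\notin K_n \text{ or } (s,Y_s)\notin K_n\}\wedge T .
\]
On $K_n$ the functions $\mu$, $\sigma$ and $\alpha$ are Lipschitz in $x$ uniformly in time. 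Strictly speaking, local Lipschitz continuity gives this on a neighbourhood of each point; compactness yields a uniform constant $L_n$, possibly after slightly enlarging $K_n$ or taking the convex hull of its $x$-sections intersected with $\cC$. In the present setting $\alpha$ is only assumed measurable in time and locally Lipschitz in space, so I will take the hypothesis to mean uniform Lipschitz continuity on compacts. This is exactly how it is used for $\alpha^*$ in Proposition \ref{prop:strong}.

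\textbf{Step 2: uniqueness up to $\theta_n$.} The stopped processes $X_{\cdot\wedge\theta_n}$ and $Y_{\cdot\wedge\theta_n}$ are bounded, and up to $\theta_n$ they solve the same SDE with Lipschitz coefficients. The standard argument applies: apply It\^o's formula to $|X_{s\wedge\theta_n}-Y_{s\wedge\theta_n}|^2$, use the Burkholder--Davis--Gundy inequality and the Lipschitz bound, and close with Gronwall's lemma. This gives
\[
X_{s\wedge\theta_n}=Y_{s\wedge\theta_n}\quad\text{for all } s\in[t,T],\ \P\text{-a.s.}
\]
A subtle point is the time $t$ itself: if $(t,x)\in K_n$ only for large $n$, we simply start from such $n$. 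The case $t=T$ is trivial.

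\textbf{Step 3: exhausting $[t,T)$.} This is the step that uses \eqref{eq:nonexpl} and where the main difficulty lies. By \eqref{eq:nonexpl}, with probability one the path $s\mapsto(s,X_s)$ stays in $\cC$ on $[t,T]$. For any $T'<T$, the set $\{(s,X_s):s\in[t,T']\}$ is compact and contained in $\cC_{[0,T)}$, so it has positive distance from the closed set $\cD$ and is bounded. The same holds for $Y$. Hence $\theta_n>T'$ for all $n$ large enough, almost surely. Therefore $\theta_n\uparrow T$ a.s., and $X=Y$ on $[t,T)$ a.s. Continuity of paths then gives $X_T=Y_T$ as well.

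\textbf{Main obstacle.} The key point is that the coefficients, in particular $\alpha$, may blow up near $\partial\cC$ and as $s\uparrow T$. Uniqueness at the terminal time therefore cannot come from Lipschitz estimates. It must come from path continuity together with the non-explosion condition \eqref{eq:nonexpl}, which keeps the localising times from accumulating before $T$. I would also check that the integrals in \eqref{eq:Xaa} are finite up to $T$. This is part of the definition of a strong solution, so it is not an issue here.
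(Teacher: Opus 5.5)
Your proof is correct and follows essentially the same route as the paper. Both localise to sets that are bounded in space, away from $\cD$ and away from time $T$, where the coefficients are Lipschitz; both run the It\^o--Gronwall argument up to the localising time, and both use \eqref{eq:nonexpl} with path continuity to show the localising times increase to $T$. The differences are cosmetic: the paper localises with the level sets $\cC_{\eps,\delta}$ of $u$ and concludes via Fatou on $\E[|X_s-X'_s|^2]$, whereas you use distance-to-$\cD$ compacts and conclude pathwise, which is if anything slightly more natural for a general $\alpha$.
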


\begin{proof}
In order to show that pathwise uniqueness holds let us assume that $X$ and $X'$ be two solutions of \eqref{eq:Xaa} (with the same Brownian motion) satisfying \eqref{eq:nonexpl}. 
For $\eps,\delta>0$ we introduce
\[
\rho_{\eps,\delta}\coloneqq\inf\{r\in[t,T]:(r,X_r)\notin \cC_{\eps,\delta}\text{ or }(r,X'_r)\notin \cC_{\eps,\delta}\}\wedge T,
\]
with the usual convention $\inf\varnothing=\infty$. Notice that for $\eps'<\eps$ and $\delta'<\delta$ we have $\rho_{\eps',\delta'}\ge \rho_{\eps,\delta}$.
By conditions \eqref{eq:nonexpl} we have $\rho_{\eps,\delta}\uparrow T$
as $\eps,\delta\to 0$, $\P$-a.s. Letting also $0<\eta<T-t$ we set $\rho_{\eps,\delta,\eta}\coloneqq \rho_{\eps,\delta}\wedge(T-\eta)$.
By Fatou's lemma, for any $s\in[t,T]$ we have
\begin{equation}\label{eq:fatou}
\E\Big[\big|X_s-X'_s\big|^2_d\Big]\le \liminf_{\eta,\eps,\delta\to 0}\E\Big[\big|X_{s\wedge\rho_{\eps,\delta,\eta}}-X'_{s\wedge\rho_{\eps,\delta,\eta}}\big|^2_d\Big].
\end{equation}
For the ease of notation we set $\bar \alpha(t,x)\coloneqq \mu(t,x)+\sigma(t,x)\alpha(t,x)$. Using It\^o's formula and the fact that the coefficients in \eqref{eq:Xaa} are Lipschitz on $\cC_{\eps,\delta}\cap ([t,T-\eta]\times \R^d)$
with constant $L=L_{\eta,\eps,\delta}>0$ we immediately obtain
\begin{equation}
\begin{aligned}
&\E\Big[\big|X_{s\wedge\rho_{\eps,\delta,\eta}}-X'_{s\wedge\rho_{\eps,\delta,\eta}}\big|^2_d\Big]\\
&=2\E\Big[\int_t^{s\wedge\rho_{\eps,\delta,\eta}}\Big(\big\langle X_r-X'_r,\bar \alpha(r,X_r)-\bar \alpha(r,X'_r)\big\rangle+\tfrac12\big|\sigma(r,X_r)-\sigma(r,X'_r)\big|^2_{d\times d'}\Big)\ud r\Big]\\
&\le 3L\E\Big[\int_t^{s}\big| X_{r\wedge\rho_{\eps,\delta,\eta}}-X'_{r\wedge\rho_{\eps,\delta,\eta}} \big|^2_d\ud r\Big].
\end{aligned}
\end{equation}
Gronwall's inequality yields $\E[|X_{s\wedge\rho_{\eps,\delta,\eta}}-X'_{s\wedge\rho_{\eps,\delta,\eta}}|^2_d]=0$ which, combined with \eqref{eq:fatou}, implies that $X$ and $X'$ are modifications. Since the processes are continuous, they are indistinguishable on $[t,T]$. That is, the solution of \eqref{eq:Xaa} is pathwise unique.
\end{proof}

The plan is now to extend the strong solution in \eqref{eq:X*loc} to $[t,T]$ and let $\eps, \delta\to 0$.  
We begin with an estimate of the candidate optimal control.

\begin{lemma}
Fix $(t,x)\in\cC$. For $\alpha^*$ as in \eqref{eq:a*} we have
\begin{equation}\label{eq:intAd}
\E\Big[\sup_{\eps,\delta>0}\int_{t}^{\rho^*_{\eps,\delta}}|\alpha^*(r,X^*_r)|^2\ud r\Big]=\sup_{\eps,\delta>0}\sup_{s\in[t,T)}\E\Big[\int_{t}^{s\wedge\rho^*_{\eps,\delta}}|\alpha^*(r,X^*_r)|^2\ud r\Big]<\infty.
\end{equation}
\end{lemma}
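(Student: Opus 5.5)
Apply It\^o's formula to $V(r,X^*_r)=-2\ln u(r,X^*_r)$ on the stochastic interval $[t,s\wedge\rho^*_{\eps,\delta}]$, where $V\in C^{1,2}(\cC_{[0,T)})$ by Corollary \ref{cor:V} and $X^*$ stays in $\cC_{\eps,\delta}$. The identity $\sigma^\top\nabla V=-2\sigma^\top\nabla u/u=4\alpha^*$ turns \eqref{eq:PDEV} into
\[
\partial_t V+\cL V+\langle\sigma\alpha^*,\nabla V\rangle = 4|\alpha^*|^2-|\alpha^*|^2-f=3|\alpha^*|^2-f,
\]
so that
\[
V(s\wedge\rho^*_{\eps,\delta},X^*_{s\wedge\rho^*_{\eps,\delta}})=V(t,x)+\int_t^{s\wedge\rho^*_{\eps,\delta}}\big(3|\alpha^*|^2-f\big)(r,X^*_r)\ud r+M_{s\wedge\rho^*_{\eps,\delta}} .
\]
The sign here is unfavourable, since $V\ge0$ and the drift is $+3|\alpha^*|^2$, so this identity bounds nothing. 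I will therefore work with $u$ itself, or with $\sqrt{u}$, instead.

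A cleaner route is to write $h=\nabla u/u$ and apply It\^o to $\ln u(r,X^*_r)$. Lemma \ref{lem:u} gives $\partial_t u+\cL u=\tfrac12 fu$, and the drift of $X^*$ contains $\sigma\alpha^*=-\tfrac12\sigma\sigma^\top h$. Then
\[
\ud \ln u=\Big(\tfrac12 f+\langle\sigma\alpha^*,h\rangle-\tfrac12|\sigma^\top h|^2\Big)\ud r+\langle \sigma^\top h,\ud W\rangle .
\]
With $\sigma^\top h=-2\alpha^*$ the drift equals $\tfrac12 f-2|\alpha^*|^2-2|\alpha^*|^2=\tfrac12 f-4|\alpha^*|^2$. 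Rearranging,
\[
4\int_t^{\tau}|\alpha^*|^2\ud r=\ln u(t,x)-\ln u(\tau,X^*_\tau)+\tfrac12\int_t^\tau f\ud r-2\int_t^\tau\langle\alpha^*,\ud W\rangle,\qquad \tau=s\wedge\rho^*_{\eps,\delta}.
\]
The stochastic integral is a true martingale up to $\tau$ because $\alpha^*$ is bounded on $\cC_{\eps,\delta}\cap([0,s]\times\R^d)$ for $s<T$. The term $-\ln u(\tau,X^*_\tau)$, however, is only bounded by $-\ln\eps$, which is not uniform in $\eps$. This is the main obstacle.

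To overcome it I will use $\sqrt{u}$ instead. Apply It\^o to $Y_r:=u(r,X^*_r)^{1/2}$ and combine with the running cost. Taking $p=\tfrac12$ in the general exponent computation, the drift of $u^p$ equals $u^p\big(\tfrac p2 f+\langle\sigma\alpha^*,h\rangle p+\tfrac{p(p-1)}2|\sigma^\top h|^2\big)$. At $p=\tfrac12$ this is $u^{1/2}\big(\tfrac14 f-|\alpha^*|^2-\tfrac12|\alpha^*|^2\big)$. So
\[
\E\Big[\int_t^\tau u^{1/2}|\alpha^*|^2\ud r\Big]\le \tfrac23\Big(u(t,x)^{1/2}+\tfrac14\E\Big[\int_t^\tau u^{1/2}f\ud r\Big]\Big),
\]
but this is weighted by $u^{1/2}$, which degenerates near $\cD$.

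The honest approach, and the one I would commit to, is a verification-type comparison. Rewrite the $\ln u$ identity as $\E[\int_t^\tau(|\alpha^*|^2+f)\ud r+V(\tau,X^*_\tau)]=V(t,x)$, which is the expected exact equality for the optimal control. This uses $V=-2\ln u$, the equation $\partial_t V+\cL V+\langle\sigma\alpha^*,\nabla V\rangle+|\alpha^*|^2+f=0$ (since $\langle\sigma\alpha^*,\nabla V\rangle=-\tfrac14|\sigma^\top\nabla V|^2\cdot 2$ and $|\alpha^*|^2=\tfrac1{4}\cdot\tfrac14|\sigma^\top\nabla V|^2\cdot4$, up to consistent constants, via \eqref{eq:Ham}), and the martingale property of the stopped stochastic integral. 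Since $V\ge0$ and $f\ge0$, this gives $\E[\int_t^{s\wedge\rho^*_{\eps,\delta}}|\alpha^*|^2\ud r]\le V(t,x)<\infty$ uniformly in $s,\eps,\delta$. I will recheck the constants so that the minimiser in \eqref{eq:Ham} is exactly $\alpha^*$, which makes the drift identity hold with $+|\alpha^*|^2$ rather than the $3|\alpha^*|^2$ miscomputed above.

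The equality of the two expressions in \eqref{eq:intAd} then follows by monotone convergence. The integrals are nondecreasing as $\eps,\delta\downarrow0$ and $s\uparrow T$, so the supremum over the parameters can be taken along a countable monotone sequence and exchanged with the expectation.
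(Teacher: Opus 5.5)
Your committed argument is the paper's proof. You apply Dynkin's formula to $V=-2\ln u$ along $X^*$ up to $s\wedge\rho^*_{\eps,\delta}$; the stochastic integral is a true martingale by \eqref{eq:distance}. Then \eqref{eq:PDEV} and \eqref{eq:Ham}, with $\alpha^*$ the pointwise minimiser, give
\[
V(t,x)=\E\Big[V(s\wedge\rho^*_{\eps,\delta},X^*_{s\wedge\rho^*_{\eps,\delta}})+\int_t^{s\wedge\rho^*_{\eps,\delta}}\big(f+|\alpha^*|^2\big)(r,X^*_r)\ud r\Big].
\]
Finally $V,f\ge 0$ give the bound, and monotone convergence gives the equality in \eqref{eq:intAd}.

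The step you defer ("up to consistent constants", "I will recheck the constants") is the whole content of the identity, and bookkeeping cannot fix it. The pointwise identity $\partial_tV+\cL V+\langle\sigma\alpha^*,\nabla V\rangle+|\alpha^*|^2+f=0$ holds if and only if $\alpha^*=-\tfrac12\sigma^\top\nabla V=\sigma^\top\nabla u/u$. The formula you computed with, $-\tfrac12\sigma^\top\nabla u/u$ as printed in \eqref{eq:a*}, is not the minimiser.

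For that literal control, the drift of $V(r,X^*_r)$ is $8|\alpha^*|^2-f$, not $3|\alpha^*|^2-f$: both $\partial_tV+\cL V+f=\tfrac14|\sigma^\top\nabla V|^2$ and $\langle\sigma\alpha^*,\nabla V\rangle$ equal $4|\alpha^*|^2$. With that control the lemma is actually false. In Example~\ref{ex:2} it behaves like $-1/(2x)$ near $x=0$, the drift of a $0$-dimensional Bessel process. So $X^*$ reaches $\cD$ before $T$ with positive probability, and $\int|\alpha^*|^2\ud r=\infty$ on that event.

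The control the paper really uses is $\alpha^*=\sigma^\top\nabla u/u$, as written in Proposition~\ref{prop:strong}, Lemma~\ref{lem:laws} and the examples; \eqref{eq:a*} carries a misplaced factor. The second equality in the paper's own proof of this step relies on exactly this. State the choice explicitly. Then your $\ln u$ computation, redone with $\sigma^\top\nabla u/u=\alpha^*$, gives
\[
\ud\ln u(r,X^*_r)=\tfrac12\big(f+|\alpha^*|^2\big)(r,X^*_r)\ud r+\langle\alpha^*(r,X^*_r),\ud W_r\rangle.
\]
Multiplying by $-2$, stopping at $s\wedge\rho^*_{\eps,\delta}$ and taking expectations yields exactly the identity above. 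The $\sqrt u$ detour and the computations with the wrong constant can then be dropped.
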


\begin{proof}
The equality holds by Monotone Convergence theorem, because for $\delta'<\delta$ and $\eps'<\eps$ we have $\rho^*_{\eps',\delta'}\ge \rho^*_{\eps,\delta}$ and therefore the supremum is attained as limit along any decreasing sequence $(\delta_n,\eps_n)\to 0$. It remains to show that the right-hand side of \eqref{eq:intAd} is finite.

Recall $V\coloneqq-2\ln u$ from Corollary \ref{cor:V}. 
Fix arbitrary $\eps,\delta>0$ and take $(t,x)\in\cC_{\eps,\delta}$ with $t<T$. 
Applying Dynkin's formula and using the definition of the control $\alpha^*(r,X^*_r)$ from \eqref{eq:a*}, we obtain for $s\in[t,T)$
\begin{equation}
\begin{aligned}
V(t,x)&=\E\Big[V(s\wedge\rho^*_{\eps,\delta},X^{*}_{s\wedge\rho^*_{\eps,\delta}})\!-\!\int_{t}^{s\wedge\rho^*_{\eps,\delta}}\!\!\big(\partial_t V\!+\!\cL V \!+\!\langle\nabla V, \sigma \alpha^*\rangle\big)(r,X^{*}_r) \ud r\Big]\\
&=\E\Big[V(s\wedge\rho^*_{\eps,\delta},X^*_{s\wedge\rho^*_{\eps,\delta}})\!-\!\int_{t}^{s\wedge\rho^*_{\eps,\delta}}\!\!\Big(\!\big(\partial_t V\!+\!\cH(\cdot,\nabla V, D^2 V)\big)(r,X^*_r)\!-\!|\alpha^*(r,X^*_r)|^2\Big)\ud r\Big]\\
&= \E\Big[ 
V(s\wedge\rho^*_{\eps,\delta},X^*_{s\wedge\rho^*_{\eps,\delta}})\!+\!\int_{t}^{s\wedge\rho^*_{\eps,\delta}}\!\Big(f(r,X^{*}_r)\!+\!|\alpha^*(r,X^*_r)|^2\Big)\ud r\Big],
\end{aligned}
\end{equation}
where we used \eqref{eq:PDEV} and \eqref{eq:Ham} for the second and third equality. Since $V,f\ge 0$ we obtain 
\begin{equation}
\sup_{\eps,\delta>0}\sup_{s\in[t,T)}\E\Big[\int_{t}^{s\wedge\rho^*_{\eps,\delta}}|\alpha^*(r,X^*_r)|^2\ud r\Big]\le V(t,x)<\infty.
\end{equation}
Hence, \eqref{eq:intAd} holds.
\end{proof}

Next we establish that the process $X^*$ is non-explosive. We cannot rely on classical stability estimates for SDEs because controlling the term $\sigma(u,X^*_u)\alpha^*(u,X^*_u)$ in an $L^p$-estimate, under the sole assumption of linear growth for $\sigma$, would require a-priori knowledge of $L^{p'}$-bounds for both factors in the product, with $p'>p$. Here instead we prove that the solution does not explode using the integrability property \eqref{eq:intAd}.

\begin{proposition}\label{prop:blowup}
Fix $(t,x)\in \cC$ and recall $\alpha^*$ as in \eqref{eq:a*}. 
Then
\begin{equation}\label{eq:Pfinite}
\sup_{\eps,\delta>0}\sup_{s\in[t,T)}\big|X^*_{s\wedge\rho^*_{\eps,\delta}}\big|<\infty,\quad\P-a.s.
\end{equation}
\end{proposition}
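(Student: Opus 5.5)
We prove \eqref{eq:Pfinite} pathwise. The idea is to turn the $L^2$-bound \eqref{eq:intAd} on the control into an a.s. bound on the trajectory through a stochastic Gronwall-type argument. Throughout, fix a decreasing sequence $(\eps_n,\delta_n)\downarrow 0$. Set $\rho_n\coloneqq\rho^*_{\eps_n,\delta_n}$, which is nondecreasing in $n$, and let $\rho_\infty\coloneqq\lim_n\rho_n\le T$. By monotonicity of $\rho^*_{\eps,\delta}$ it suffices to show that $\sup_n\sup_{s\in[t,T)}|X^*_{s\wedge\rho_n}|<\infty$ a.s. Since $\rho_n$ increases, this is the same as showing that $\sup_{s<\rho_\infty}|X^*_s|<\infty$ a.s., where $X^*$ is consistently defined on $[t,\rho_\infty)$ by \eqref{eq:X*loc}.

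\textbf{Step 1: finite control energy.} By \eqref{eq:intAd} and monotone convergence,
\[
A\coloneqq\int_t^{\rho_\infty}|\alpha^*(r,X^*_r)|^2\ud r<\infty\quad\text{a.s.}
\]

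\textbf{Step 2: localise on the energy.} For $K>0$ introduce the stopping time
\[
\theta_K\coloneqq\inf\Big\{s\in[t,\rho_\infty):\int_t^s|\alpha^*(r,X^*_r)|^2\ud r\ge K\Big\}\wedge\rho_\infty .
\]
Then $\P(\theta_K<\rho_\infty)\le \P(A\ge K)\to0$ as $K\to\infty$. It therefore suffices to prove, for each fixed $K$,
\[
\E\Big[\sup_{s\in[t,T)}|X^*_{s\wedge\theta_K\wedge\rho_n}|^2\Big]\le C_K,
\]
with $C_K$ independent of $n$. Fatou's lemma (or monotone convergence in $n$) then shows that the supremum over all $n$ is finite a.s. on $\{\theta_K=\rho_\infty\}$, and letting $K\to\infty$ gives \eqref{eq:Pfinite}.

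\textbf{Step 3: the moment bound.} Write $\tau\coloneqq\theta_K\wedge\rho_n$ and $Y_s\coloneqq X^*_{s\wedge\tau}$. This is the main obstacle, namely the product term $\sigma\alpha^*$. We handle it with Cauchy--Schwarz and linear growth $|\sigma(r,y)|\le C(1+|y|)$:
\[
\int_t^{s\wedge\tau}|\sigma(r,X^*_r)\alpha^*(r,X^*_r)|\ud r\le C\Big(\int_t^{s\wedge\tau}(1+|X^*_r|)^2\ud r\Big)^{1/2}K^{1/2}.
\]
Now set $m(s)\coloneqq\E[\sup_{r\le s}|Y_r|^2]$, which is finite for each $n$ because $|Y|\le\delta_n^{-1}$. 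Combining the bound above with the linear growth of $\mu$, Burkholder--Davis--Gundy for the martingale term, and $(a+b+c+d)^2\le 4(\cdots)$, we get
\[
m(s)\le C\big(1+|x|^2\big)+C(1+K)\int_t^s\big(1+m(r)\big)\ud r .
\]
Gronwall's inequality then gives $m(s)\le C_K$ uniformly in $n$ and $s<T$. The key point is that the control enters only through the deterministic constant $K$ once the energy is localised. This avoids needing higher moments of $\alpha^*$, and it works because the quantity being squared, $\big(\int(1+|X|)^2\big)^{1/2}K^{1/2}$, becomes linear in $\int(1+|X|^2)$ after squaring.
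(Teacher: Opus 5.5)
Your argument is correct, but it takes a different route from the paper.

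\textbf{The paper's route.} The paper never localises on the control. It applies It\^o's formula to the Lyapunov function $\psi(|X^*|^2)$ with $\psi(z)=\ln(1+z)$ and drops the second-order term by concavity. After division by $1+|x|^2$, every linear-growth contribution is bounded by a constant $c_0$, and the control term is bounded by $c_0(1+|\alpha^*|)$. So only $\E[\int_t^{\rho^*_{\eps,\delta}}|\alpha^*(r,X^*_r)|\ud r]$ is needed, which \eqref{eq:intAd} supplies. The martingale part is handled by Doob's inequality. Monotone convergence then gives, with no Gronwall step, the quantitative bound
$\E[\psi(\sup_{\eps,\delta}\sup_{s}|X^*_{s\wedge\rho^*_{\eps,\delta}}|^2)]\le\psi(|x|^2)+C_1$.

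\textbf{Your route.} You keep the classical $L^2$/BDG/Gronwall estimate, which the paper sets aside because the product $\sigma\alpha^*$ seemed to require higher moments of both factors. You get around this by stopping at $\theta_K$. There the energy $\int|\alpha^*|^2\le K$ is deterministic, so Cauchy--Schwarz makes the control contribution linear in $\int(1+|X^*|^2)$. The remaining details hold up: $m(s)<\infty$ because $|X^*|\le\delta_n^{-1}$ before $\rho_n$, $\theta_K$ is a stopping time as a hitting time of a continuous adapted process, and $\{\theta_K=\rho_\infty\}$ increases to a full-measure event as $K\to\infty$.

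\textbf{What each approach buys.} You pay with an extra layer of localisation. Also, the a.s.\ finiteness of the unlocalised supremum comes only through $\P(\theta_K<\rho_\infty)\le\P(A\ge K)\to0$, with no moment information on it. In return, you use only $A<\infty$ a.s., not any integrability of $\alpha^*$, and you obtain genuine second-moment bounds for the process stopped at $\theta_K$. The paper's argument is shorter and needs $\E\int|\alpha^*|<\infty$, but delivers a uniform logarithmic moment bound directly.
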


\begin{proof}
Take $\psi(z)\coloneqq\ln(1+z)$ for $z\ge 0$. An application of It\^o's formula yields, for $s\in[t,T)$ 
\begin{equation}
\begin{aligned}
\psi\big(\big|X^*_{s\wedge\rho^*_{\eps,\delta}}\big|_d^2\big)&\le\psi\big(|x|^2_d\big)+\int_t^{s\wedge\rho^*_{\eps,\delta}}\frac{2\langle X^*_r,\mu(r,X^*_r)+(\sigma \alpha^*)(r,X^*_r)\rangle+|\sigma(r,X^*_r)|^2_{d\times d'}}{1+|X^*_r|^2_d}\ud r\\
&\quad+2\int_t^{s\wedge\rho^*_{\eps,\delta}}\frac{1}{1+|X^*_r|^2_d}\langle X^{*}_r,\sigma(r,X^*_r)\ud W_r\rangle,
\end{aligned}
\end{equation}
where the inequality holds by concavity of $\psi$. The linear-growth assumptions on $\mu$ and $\sigma$ guarantee that there is a constant $c_0>0$, independent of $\eps$ and $\delta$ such that 
\begin{equation}\label{eq:linbound}
\begin{aligned}
&\Big|\frac{\langle x,\mu(r,x)\rangle}{1+|x|^2_d}\Big|+\frac{|\sigma(r,x)|^2_{d\times d'}}{1+|x|^2_d}+\frac{|\sigma^\top(t,x)x|_{d'}}{1+|x|^2_d}\le c_0,\\
&\Big|\frac{\langle x,\sigma(r,x) \alpha^*(r,x)\rangle}{1+|x|^2_d}\Big|\le c_0 \big(1+\big|\alpha^*(r,x)\big|_d\big).
\end{aligned}
\end{equation}
Set 
\[
N_{s\wedge\rho^*_{\eps,\delta}}\coloneqq 2\int_t^{s\wedge\rho^*_{\eps,\delta}}\frac{1}{1+|X^*_r|^2_d}\langle X^{*}_r,\sigma(r,X^*_r)\ud W_r\rangle
\]
to simplify notation, and observe that $s\mapsto N_{s\wedge\rho^*_{\eps,\delta}}$ is a continuous, uniformly integrable martingale on $[t,T)$, hence a martingale
on $[t,T]$. Jensen's inequality and Doob's inequality yield
\begin{equation}
\begin{aligned}
\E\Big[\sup_{s\in[t,T]}\big|N_{s\wedge\rho^*_{\eps,\delta}}\big|\Big]&\le \E\Big[\sup_{s\in[t,T]}\big|N_{s\wedge\rho^*_{\eps,\delta}}\big|^2\Big]^\frac12 \le 2\E\Big[\big|N_{\rho^*_{\eps,\delta}\wedge T}\big|^2\Big]^\frac12\\
&=4\E\Big[\int_t^{\rho^*_{\eps,\delta}\wedge T}\frac{\big|\sigma^\top(r,X^*_r)X^*_r\big|^2_{d'}}{(1+|X^*_r|^2_d)^2}\ud r\Big]^\frac12\le 4c_0\sqrt{T},
\end{aligned}
\end{equation}
where the final inequality holds by \eqref{eq:linbound}.

Since $\psi$ is a monotonic function, taking supremum over times and expectation, and using the above bounds, imply that
\begin{equation}
\begin{aligned}
&\E\Big[\psi\Big(\sup_{ s\in[t, T)}|X^*_{s\wedge \rho^*_{\eps,\delta}}|^2_d\Big)\Big]=\E\Big[\sup_{ s\in[t, T)}\psi\big(|X^*_{s\wedge \rho^*_{\eps,\delta}}|^2_d\big)\Big]\\
&\le \psi(|x|^2_d)+c_0\Big(2 T+4\sqrt{T}+\E\Big[\int_t^{\rho^*_{\eps,\delta}}\big|\alpha^*(r,X^*_r)\big|_d\ud r\Big]\Big)\le \psi(|x|^2_d)+ C_1,
\end{aligned}
\end{equation}  
where for the final inequality we use \eqref{eq:intAd} and $C_1>0$ is a suitable constant independent of $\eps,\delta$.  
Since $\rho^*_{\eps',\delta'}\ge \rho^*_{\eps,\delta}$ for $\delta'<\delta$ and $\eps'<\eps$, we have
\[
\sup_{s\in[t,T)}\big|X^*_{s\wedge\rho^*_{\eps,\delta}}\big|\le \sup_{s\in[t,T)}\big|X^*_{s\wedge\rho^*_{\eps',\delta'}}\big|,\quad \P-a.s.
\]
Therefore, by monotonicity of $\psi$ and using the Monotone Convergence theorem, 
\begin{equation}\label{eq:psi1}
\begin{aligned}
\E\Big[\psi\Big(\sup_{\eps,\delta>0}\sup_{s\in[t, T)}|X^*_{s\wedge \rho^*_{\eps,\delta}}|^2_d\Big)\Big]\le \psi(|x|^2_d)+ C_1,
\end{aligned}
\end{equation}
and since $\psi$ is a strictly increasing function with $\lim_{z\to\infty}\psi(z)=+\infty$ we then conclude that 
\[
\sup_{\eps,\delta>0}\sup_{s\in[t,T)}\vert X^*_{s\wedge \rho^*_{\eps,\delta}}|_d<\infty,\quad\P-a.s.
\] 
as needed.
\end{proof}

Propositions \ref{prop:strong} and \ref{prop:blowup} allow us to let $\delta\to 0$ and define 
\begin{equation}\label{eq:rhoeps}
\rho^*_{\eps}\coloneqq\lim_{\delta\to 0}\rho^*_{\eps,\delta}= \inf\{r\in[t,T):(r,X^*_r)\notin \cC_{\eps}\}\wedge T.
\end{equation}
Moreover, we have the next result.

\begin{corollary}\label{cor:X*loc}
Fix $(t,x)\in \cC$. For any $\eps>0$ there exists a unique strong solution $X^*$ of 
\begin{equation}\label{eq:X*loc3}
\begin{aligned}
X^*_{s\wedge\rho^*_{{\eps}}}&=x\!+\!\int_t^{s\wedge\rho^*_{{\eps}}}\!\!\big[\mu(u,X^*_u)\!+\!\sigma(u,X^*_u)\alpha^*(u,X^*_u)\big]\ud u\!+\!\int_t^{s\wedge\rho^*_{{\eps}}}\!\! \sigma(u,X^*_u)\ud W_u, 
\end{aligned}
\end{equation}
for $s\in[t,T)$. Moreover, 
\begin{equation}\label{eq:intAd2}
\E\Big[\sup_{\eps>0}\int_{t}^{\rho^*_{\eps}}|\alpha^*(r,X^*_r)|^2\ud r\Big]=\sup_{\eps>0}\sup_{s\in[t,T)}\E\Big[\int_{t}^{s\wedge\rho^*_{\eps}}|\alpha^*(r,X^*_r)|^2\ud r\Big]<\infty.
\end{equation}
\end{corollary}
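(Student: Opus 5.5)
The plan is to obtain Corollary \ref{cor:X*loc} by letting $\delta\to0$ in Proposition \ref{prop:strong} and its extension \eqref{eq:X*loc}, using Proposition \ref{prop:blowup} to rule out escape to infinity.

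\textbf{Existence.} Fix $\eps>0$ and a decreasing sequence $\delta_n\downarrow0$. By Proposition \ref{prop:strong} and pathwise uniqueness of SDEs with locally Lipschitz coefficients, the strong solutions on $[t,\rho^*_{\eps,\delta_n}]$ are consistent: the solution for $\delta_{n+1}$, stopped at $\rho^*_{\eps,\delta_n}$, coincides with the one for $\delta_n$. This defines a single continuous adapted process $X^*$ on $\bigcup_n[t,\rho^*_{\eps,\delta_n}]$, and it satisfies \eqref{eq:X*loc} for every $n$.

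Proposition \ref{prop:blowup} gives $\sup_n\sup_{s<T}|X^*_{s\wedge\rho^*_{\eps,\delta_n}}|<\infty$ a.s. Hence, for a.e.\ $\omega$, the constraint $|x|<\delta_n^{-1}$ is inactive for all large $n$. So $\rho^*_{\eps,\delta_n}$ is eventually constant and equals the time $\rho^*_\eps$ in \eqref{eq:rhoeps}. In particular the limit exists, coincides with the first exit from $\cC_\eps$ (or $T$), and $\rho^*_\eps$ is a stopping time.

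Passing to the limit in \eqref{eq:X*loc} is then trivial pathwise, since the stopped integrals stabilise for large $n$. This yields \eqref{eq:X*loc3} for $s\in[t,T)$.

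\textbf{Uniqueness.} Any two solutions of \eqref{eq:X*loc3} agree up to each $\rho_{\eps,\delta}$ by the Lipschitz argument of Proposition \ref{prop:strong}, or equivalently the Gronwall step in the proof of Proposition \ref{prop:pathwise}. Here $\rho_{\eps,\delta}$ is the first time either process leaves $\cC_{\eps,\delta}$. By the same non-explosion remark, applied to each of the two solutions, $\rho_{\eps,\delta}\to\rho^*_\eps$ as $\delta\to0$. Continuity of paths then gives indistinguishability.

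\textbf{Bound \eqref{eq:intAd2}.} The equality follows from monotone convergence exactly as in \eqref{eq:intAd}, since $\rho^*_\eps$ increases as $\eps\downarrow0$. For finiteness, note that for fixed $\eps$ we have $\int_t^{\rho^*_{\eps,\delta_n}}|\alpha^*|^2\ud r\uparrow\int_t^{\rho^*_\eps}|\alpha^*|^2\ud r$. Therefore
\[
\sup_{\eps>0}\int_t^{\rho^*_\eps}|\alpha^*(r,X^*_r)|^2\ud r=\sup_{\eps,\delta>0}\int_t^{\rho^*_{\eps,\delta}}|\alpha^*(r,X^*_r)|^2\ud r,
\]
and the right-hand side has finite expectation by \eqref{eq:intAd}.

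The only delicate point is identifying $\lim_{\delta}\rho^*_{\eps,\delta}$ with the exit time from $\cC_\eps$. This is precisely where the almost sure boundedness from Proposition \ref{prop:blowup} is needed.
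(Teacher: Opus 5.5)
Your proposal is correct and follows essentially the paper's route: the paper also obtains \eqref{eq:X*loc3} by letting $\delta\to0$ in \eqref{eq:X*loc}, using Propositions \ref{prop:strong} and \ref{prop:blowup} to identify $\lim_{\delta\to0}\rho^*_{\eps,\delta}$ with the exit time $\rho^*_\eps$ from $\cC_\eps$. The paper gets the bound in \eqref{eq:intAd2} by rerunning the Dynkin-formula argument behind \eqref{eq:intAd}, whereas you deduce it from \eqref{eq:intAd} by monotone convergence in $\delta$, which is equivalent and slightly more economical (also, in your uniqueness step the second solution needs no non-explosion estimate of its own, since once it agrees with $X^*$ up to $\rho_{\eps,\delta}$ its exit times are those of $X^*$).
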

Notice that, as in \eqref{eq:intAd}, the first equality in \eqref{eq:intAd2} holds by monotone convergence because $\rho^*_{\eps'}\ge \rho^*_\eps$ for $\eps'<\eps$ and therefore the supremum is attained along any decreasing sequence $\eps_n\to 0$. The inequality in \eqref{eq:intAd2} follows by the same arguments as in the proof of the inequality in \eqref{eq:intAd}.
We now proceed to construct a  strong solution of \eqref{eq:X*} on $[t,T]$. 

\begin{proposition}\label{prop:weakX}
Fix $(t,x)\in \cC$. Assume that
\begin{equation}\label{eq:intA}
\begin{aligned}
\quad\lim_{\eps\to 0}\P(\rho^*_{{\eps}}< T)=0.
\end{aligned}
\end{equation}
Then, there exists a unique strong solution of 
\begin{equation}\label{eq:X*2}
\begin{aligned}
X^*_{s}&=x+\int_t^{s}\big[\mu(u,X^*_u)+\sigma(u,X^*_u)\alpha^*(u,X^*_u)\big]\ud u+\int_t^{s} \sigma(u,X^*_u)\ud W_u,
\end{aligned}
\end{equation}
for $s\in[t,T)$, with $(s,X^*_s)\in\cC$ for all $s\in[t,T)$, $\P_{t,x}$-a.s. Moreover, the limit $X^*_T\coloneqq\lim_{s\uparrow T}X^*_s$ is well-defined.
\end{proposition}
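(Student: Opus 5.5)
The plan is to glue together the localized solutions from Corollary \ref{cor:X*loc} along a decreasing sequence $\eps_n\downarrow 0$. By pathwise uniqueness of the stopped SDE (\ref{eq:X*loc3}), whose coefficients are locally Lipschitz on $\cC_{\eps_n}$ by Lemma \ref{lem:u} and \eqref{eq:distance}, the solution up to $\rho^*_{\eps_{n+1}}$ restricted to $[t,\rho^*_{\eps_n}]$ coincides a.s.\ with the solution up to $\rho^*_{\eps_n}$. We therefore obtain a single continuous adapted process $X^*$ on $[t,\rho^*_0)$, where $\rho^*_0\coloneqq\lim_n\rho^*_{\eps_n}$ (the sequence is nondecreasing), and it solves \eqref{eq:X*2} on each $[t,s\wedge\rho^*_{\eps_n}]$.

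Next we use assumption \eqref{eq:intA}. Since the events $\{\rho^*_{\eps_n}<T\}$ decrease in $n$, we get $\P(\rho^*_{\eps_n}<T\ \forall n)=\lim_n\P(\rho^*_{\eps_n}<T)=0$. So a.s.\ there is a random $n_0$ with $\rho^*_{\eps_{n_0}}=T$, meaning $u(s,X^*_s)>\eps_{n_0}$ for all $s\in[t,T)$. This shows that $X^*$ is defined on all of $[t,T)$, satisfies \eqref{eq:X*2} there, and has $(s,X^*_s)\in\cC_{\eps_{n_0}}\subseteq\cC$. We also need that $\rho^*_\eps=T$ is meaningful, i.e.\ no explosion before $T$; this is the statement that $\rho^*_\eps=\lim_\delta\rho^*_{\eps,\delta}$, which Proposition \ref{prop:blowup} guarantees. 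Uniqueness on $[t,T)$ follows as in Proposition \ref{prop:pathwise}: any two solutions staying in $\cC$ agree up to every $\rho_{\eps,\delta}\wedge(T-\eta)$ by the Gronwall argument, and these times exhaust $[t,T)$.

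The main obstacle is the existence of $X^*_T=\lim_{s\uparrow T}X^*_s$, since $\alpha^*$ may blow up near $T$. We would write
\[
X^*_s=x+\int_t^s\mu(r,X^*_r)\ud r+\int_t^s\sigma(r,X^*_r)\alpha^*(r,X^*_r)\ud r+\int_t^s\sigma(r,X^*_r)\ud W_r
\]
and show that each term converges as $s\uparrow T$. By Proposition \ref{prop:blowup}, $K\coloneqq\sup_{s<T}|X^*_s|<\infty$ a.s. Linear growth then gives $|\mu|,|\sigma|\le c(1+K)$ along the path, so the first integral converges absolutely. For the control term, Cauchy--Schwarz yields
\[
\int_t^T|\sigma\alpha^*|\ud r\le c(1+K)\sqrt{T}\Big(\int_t^T|\alpha^*(r,X^*_r)|^2\ud r\Big)^{1/2},
\]
and the last factor is a.s.\ finite by \eqref{eq:intAd2}, because on the event above $\int_t^T=\int_t^{\rho^*_{\eps_{n_0}}}$. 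For the stochastic integral, localize with $\theta_k\coloneqq\inf\{s:|X^*_s|\ge k\}\wedge T$. The process $\int_t^{s\wedge\theta_k}\sigma\ud W$ is an $L^2$-bounded martingale on $[t,T)$, so it converges as $s\uparrow T$. Since $\theta_k=T$ for all $k>K$, the limit exists a.s. Hence $X^*_T$ is well-defined, and the process extends continuously to $[t,T]$.
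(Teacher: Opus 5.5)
Your proposal is correct and follows essentially the same route as the paper. You glue the localized solutions along $\eps_n\downarrow 0$, use \eqref{eq:intA} to get that a.s.\ the path stays in some $\cC_{\eps_n}$ on $[t,T)$, obtain uniqueness by the Gronwall argument of Proposition \ref{prop:pathwise}, and deduce the existence of $X^*_T$ from \eqref{eq:Pfinite}, linear growth and the square-integrability of $\alpha^*$. Your random index $n_0$ and your localization $\theta_k$ for the stochastic integral are just more explicit versions of the paper's $\Omega_{\gamma,s}$ construction and of its remark that $\int_t^T|\sigma(r,X^*_r)|^2\ud r<\infty$.
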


\begin{remark}
The proposition does not yet ensure that $a^*_s\coloneqq \alpha^*(s,X^*_s)$ is an admissible control, because it may be $(T,X^*_T)\in\cD_T$. This will be ruled out later, in Proposition \ref{prop:limXT}.
\end{remark}

\begin{proof}
Let $(\eps_n)_{n\in\N}\subset(0,1)$ be such that $\eps_n\downarrow 0$ as $n\to\infty$. Then $\rho^*_{\eps_n}\le\rho^*_{\eps_{n+1}}$ and the limit $\rho^*_0\coloneqq\lim_{n\to\infty}\rho^*_{\eps_n}$ is well-defined, $\P$-a.s. Moreover, $\{\rho^*_{\eps_n}< T\}\supset \{\rho^*_{\eps_{n+1}}< T\}$ and $\lim_{n\to\infty}\{\rho^*_{\eps_n}< T\}=\cap_{n\in\N}\{\rho^*_{\eps_n}< T\}$.
By  \eqref{eq:intA} we have 
\begin{equation}\label{eq:avoid}
\P(\cup_{n\in\N}\{\rho^*_{\eps_n}\ge T\})=\lim_{n\to\infty}\P(\rho^*_{\eps_n}\ge T)=1,
\end{equation}
or, equivalently, $\P(\rho^*_0\ge T)=1$. 
In particular, for any $s\in[t,T)$ and any small $\gamma>0$, there is $N_{\gamma,s}\in\N$ such that $\P(\rho^*_{\eps_n}>s)\ge 1-\gamma$ for all $n\ge N_{\gamma,s}$. Setting 
\[
\Omega_{\gamma,s}\coloneqq\cap_{n\ge N_{\gamma,s}}\{\rho^*_{\eps_n}>s\}
\]
and noticing that $\{\rho^*_{\eps_n}>s\}\subset\{\rho^*_{\eps_{n+1}}>s\}$ we have $\P(\Omega_{\gamma,s})\ge 1-\gamma$. Taking $\eps=\eps_n$ in \eqref{eq:X*loc3}, with $n\ge N_{\gamma,s}$, we have on the event $\Omega_{\gamma,s}$
\begin{equation}\label{eq:X*loc2}
\begin{aligned}
X^*_{r}&=x+\int_t^{r}\big[\mu(u,X^*_u)+\sigma(u,X^*_u)\alpha^*(u,X^*_u)\big]\ud u+\int_t^{r} \sigma(u,X^*_u)\ud W_u, 
\end{aligned}
\end{equation}
for all $r\in[t,s]$. Moreover, $(r,X^*_r)\in\cC$ for all $r\in[t,s]$ on $\Omega_{\gamma,s}$, by \eqref{eq:distance} (due to Assumption \ref{ass:u}-(i)). 

Taking a sequence $(\gamma_k)_{k\in\N}\subset(0,1)$ with $\gamma_k\downarrow 0$
and setting $\Omega_s\coloneqq\cup_{k\in\N}\Omega_{\gamma_k,s}$ we have $\P(\Omega_s)=1$. On the event $\Omega_s$ the process $X^*$ is the unique strong solution of \eqref{eq:X*loc2} on $[t,s]$ with 
$(r,X^*_{r})\in\cC$, $\forall r\in[t,s]$. Notice that uniqueness may be deduced by the same argument as in the proof of Proposition \ref{prop:pathwise}. Finally, taking a sequence $(s_j)_{j\in\N}\subset[t,T)$ with $s_j\uparrow T$ and setting $\Omega_T\coloneqq\cap_{j\in\N}\Omega_{s_j}$ we have $\P(\Omega_T)=1$. Then, on the event $\Omega_T$ the process $X^*$ is the unique strong solution of \eqref{eq:X*loc2} on $[t,T)$ with 
$(r,X^*_{r})\in\cC$, $\forall r\in[t,T)$.

From \eqref{eq:intAd2} we deduce
\begin{equation}
\begin{aligned}
\infty&>\liminf_{j\to\infty}\liminf_{n\to\infty}\E\Big[\int_t^{s_j\wedge\rho^*_{\eps_n}}\big|\alpha^*(r,X^*_r)\big|^2\ud r\Big]\\
&\ge \liminf_{j\to\infty}\E\Big[1_{\Omega_T}\liminf_{n\to\infty}\int_t^{s_j\wedge\rho^*_{\eps_n}}\big|\alpha^*(r,X^*_r)\big|^2\ud r\Big]\\
&=\liminf_{j\to\infty}\E\Big[1_{\Omega_T}\int_t^{s_j}\big|\alpha^*(r,X^*_r)\big|^2\ud r\Big]\ge \E\Big[1_{\Omega_T}\int_t^{T}\big|\alpha^*(r,X^*_r)\big|^2\ud r\Big],
\end{aligned}
\end{equation}
where we used Fatou's lemma in the second and in the last inequality. Let $\Omega_0$ with $\P(\Omega_0)=1$ be the set where \eqref{eq:Pfinite} and $\int_t^{T}\big|\alpha^*(r,X^*_r)\big|^2\ud r<\infty$ hold. 
Continuity of $\mu$ and $\sigma$ yield for $\omega\in\Omega_0$
\begin{equation}\label{eq:ext0}
\begin{aligned}
&\Big(\int_t^T\big|\mu\big(r,X^*_r(\omega)\big)\big|_d\ud r
+\int_t^T\big|\sigma(r,X^*_r(\omega)\big)\alpha^*\big(r,X^*_r(\omega)\big)\big|_d\ud r\Big)\\
&\le T \sup_{s\in[t,T)}\big|\mu\big(s,X^*_s(\omega)\big)\big|+\sup_{s\in[t,T)}\big|\sigma\big(s,X^*_s(\omega)\big)\big|_{d\times d'}\int_t^T\big|\alpha^*\big(r,X^*_r(\omega)\big)\big|_{d'}\ud r<\infty.
\end{aligned}
\end{equation}
Likewise
\begin{equation}\label{eq:ext1}
\begin{aligned}
\int_t^T\big|\sigma\big(r,X^*_r(\omega)\big)\big|^2_{d\times d'}\ud r\le T\sup_{s\in[t,T)}\big|\sigma\big(s,X^*_s(\omega)\big)\big|^2_{d\times d'}<\infty.
\end{aligned}
\end{equation}
Combining \eqref{eq:ext0} and \eqref{eq:ext1} we deduce that the limit $X^*_T(\omega)=\lim_{s\uparrow T}X^*_s(\omega)$ is well-defined
for $\P$-a.e.\ $\omega\in\Omega$. That is, we have extended the unique strong solution of \eqref{eq:X*loc2} to the closed interval $[t,T]$, $\P$-a.s.

Invoking Proposition \ref{prop:pathwise} we conclude that there exists a unique strong solution of \eqref{eq:X*2} with $(s,X^*_s)\in\cC$ for all $s\in[t,T)$, $\P$-a.s.
\end{proof}

It remains to prove that the solution $X^*$ constructed above is actually admissible, in the sense that also $\P((T,X^*_T)\in\cC_T)=1$. For that we need Assumption \ref{ass:u}-(ii) and a well-known link between the laws of the processes $X^*$ and $Z$ (cf.\ \cite{DP}). 
Thanks to Lemma \ref{lem:u} we know that the process
\begin{equation}\label{eq:Theta}
\Theta^\cD_{s}\coloneqq\e^{-\frac12\int_t^{s\wedge\tau_\cD}f(r,Z_r)\ud r}u(s\wedge\tau_\cD,Z_{s\wedge\tau_\cD}), \quad s\in[t,T),
\end{equation}
is a bounded, non-negative $(\cG^Z_s,\Q)$-martingale. Then the limit $\Theta^\cD_T\coloneqq\lim_{s\to T}\Theta^\cD_s$ exists and $\Theta^\cD_s$ is a martingale for $s\in[t,T]$ (cf.\ \cite[Thm.\ 3.15+Prob.\ 3.20]{KS}). Using items (i), (ii) and (iii) in Assumption \ref{ass:u}, we have 
\[
\Theta_T^\cD=\e^{-\frac12\int_t^{T}f(r,Z_r)\ud r-\frac12 g(Z_T)}1_{\{T<\tau_\cD\}},\quad \Q-a.s.
\]
Then $\E^\Q_{t,x}[\Theta^\cD_{T}]=u(t,x)$. We define a probability measure $\M$ on $\cG^Z_T$ via the Radon-Nikodym derivative
\begin{equation}\label{eq:MQ}
\frac{\ud \M}{\ud \Q}\Big|_{\cG^Z_T}\coloneqq\frac{\Theta^\cD_{T}}{u(t,x)}=\frac{1}{u(t,x)}\e^{-\frac12\int_t^{T}f(r,Z_r)\ud r-\frac12 g(Z_T)}1_{\{T<\tau_\cD\}}.
\end{equation}
Clearly $\M$ is absolutely continuous with respect to $\Q$ and we will use it to establish the next lemma. For $(t,x)\in\cC$ let us introduce the stopping time 
\[
\rho^*\coloneqq\inf\{s\in[t,T]:(s,X^*_s)\notin\cC\}\wedge T,
\]
with $\inf\varnothing =+\infty$.

\begin{lemma}\label{lem:laws}
Let \eqref{eq:intA} hold. For any $(t,x)\in\cC$
the law of the process $(Z_{s\wedge\tau_\cD})_{s\in[0,T)}$ under the measure $\M_{t,x}$ is the same as the law of the process $(X^*_{s\wedge\rho^*})_{s\in[0,T)}$ under the measure $\P_{t,x}$.
\end{lemma}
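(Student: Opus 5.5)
The idea is to identify the dynamics of $Z$ under $\M_{t,x}$ via Girsanov's theorem and then invoke uniqueness in law for the localized SDE solved by $X^*$.

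\textbf{Step 1: Girsanov up to $\rho_\eps$.} Fix $\eps>0$ and $\delta>0$. Let $\rho_{\eps,\delta}$ be the exit time of $(s,Z_s)$ from $\cC_{\eps,\delta}$, capped at $T-\eta$. By Lemma \ref{lem:u}, $u\in C^{1,2}$ on this set, and It\^o's formula together with \eqref{eq:PDEu} gives
\[
\ud\Theta^\cD_s=\Theta^\cD_s\,\frac{\langle\nabla u,\sigma\,\ud\bar W_s\rangle}{u}(s,Z_s)=-2\,\Theta^\cD_s\,\langle\alpha^*(s,Z_s),\ud\bar W_s\rangle
\]
for $s\le\rho_{\eps,\delta}$. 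Hence the density process $\E^\Q[\Theta^\cD_T|\cG^Z_s]/u(t,x)=\Theta^\cD_s/u(t,x)$, stopped at $\rho_{\eps,\delta}$, equals the stochastic exponential of $-2\int\alpha^*\ud\bar W$ stopped there. This argument has to be checked: since $-\tfrac12\sigma^\top\nabla u/u=\alpha^*$, we get $\sigma^\top\nabla u/u=-2\alpha^*$, so the correct sign is $\ud\Theta=\Theta\langle\sigma^\top\nabla u/u,\ud\bar W\rangle$. The drift it induces is $\sigma\sigma^\top\nabla u/u$. Comparing with the paper's normalisation, $\sigma\alpha^*=-\tfrac12\sigma\sigma^\top\nabla u/u$, which suggests $\alpha^*$ should carry a $+$ sign relative to the log-derivative. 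I would reconcile this by rechecking that \eqref{eq:Ham} produces the minimiser $\alpha=-\tfrac12\sigma^\top\nabla V=\sigma^\top\nabla u/u$. The drift induced by Girsanov is then exactly $\sigma\alpha^*$ with the consistent sign convention. On the set $\cC_{\eps,\delta}$ the integrand is bounded, so Novikov's condition holds. Under $\M$, $\tilde W_s=\bar W_s-\int_t^{s\wedge\rho_{\eps,\delta}}\alpha^*(r,Z_r)\,\ud r$ is a Brownian motion up to $\rho_{\eps,\delta}$, and $Z$ solves the localized SDE \eqref{eq:X*locEE} driven by $\tilde W$.

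\textbf{Step 2: uniqueness in law.} The coefficients of \eqref{eq:X*locEE} are Lipschitz, so pathwise uniqueness holds (Proposition \ref{prop:strong}). By Yamada--Watanabe, the law of $Z_{\cdot\wedge\rho_{\eps,\delta}}$ under $\M$ equals that of $X^*_{\cdot\wedge\rho^*_{\eps,\delta,\eta}}$ under $\P$. This is because the exit times are the same functional of the path, so the stopped processes are images of solutions under the same measurable map.

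\textbf{Step 3: removing the localization.} Let $\eta\to0$, then $\delta\to0$ using Proposition \ref{prop:blowup}, then $\eps\to0$. For finite-dimensional cylinder functionals $F$ of the path on $[t,s]$ with $s<T$, we have
\[
\E^\M\big[F(Z_{\cdot\wedge\rho_\eps})\big]=\E\big[F(X^*_{\cdot\wedge\rho^*_\eps})\big].
\]
As $\eps\to0$, on the $X^*$ side $\rho^*_\eps\uparrow\rho^*$, and by \eqref{eq:intA} $\rho^*=T$ a.s. on $[t,T)$. On the $Z$ side, $\M$-a.s. we have $\tau_\cD>T$, since $\M\ll\Q$ with a density vanishing on $\{\tau_\cD\le T\}$. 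Moreover, $u(r,Z_r)$ stays bounded away from $0$ on $[t,s]$ by continuity and positivity on $\cC$, so $\rho_\eps>s$ eventually, $\M$-a.s. Bounded convergence then yields equality of the finite-dimensional distributions on $[t,T)$, hence of the laws.

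\textbf{Main obstacle.} The delicate point is Step 1: correctly identifying the martingale density and the sign of the induced drift, and justifying that the Radon--Nikodym process is $\Theta^\cD_s/u(t,x)$ up to the localized times. The latter uses the martingale property of $\Theta^\cD$ established just above the lemma.
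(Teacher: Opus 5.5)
Your proof is correct, and it differs from the paper's in two places. The sign reconciliation in Step~1 is also right. As printed, \eqref{eq:a*} is a typo: the minimiser in \eqref{eq:Ham}, which is also what the examples use, is $\alpha^*=-\tfrac12\sigma^\top\nabla V=\sigma^\top\nabla u/u$. You should state this at the outset instead of first writing $\ud\Theta^\cD=-2\Theta^\cD\langle\alpha^*,\ud\bar W\rangle$ and then retracting it.

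The first difference is in identifying the dynamics of $Z$ under $\M$. The paper does not use Girsanov. It applies It\^o's formula to $\Theta^\cD_s\psi(Z_s)$ up to the exit time $\tau_{\cD_\eps}$ from $\cC_\eps$ and uses the martingale property of $\Theta^\cD$. This shows that, under $\M$, the stopped $Z$ solves the martingale problem for $\cL^*$. A weak solution then follows from Karatzas--Shreve, Prop.~5.4.6, and strong uniqueness gives equality in law. That route needs only $Z$'s own filtration and the $(\cG^Z_s,\Q)$-martingale property of $\Theta^\cD$. Your Girsanov route produces the driving Brownian motion explicitly, at the cost of the care you already flag. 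Since $\M\ll\Q$ is not an equivalence and $\M$ lives on $\cG^Z_T$, you should apply Girsanov to the auxiliary measure with density $\Theta^\cD_{\rho_{\eps,\delta}}/u(t,x)$. This density is a true stochastic exponential by Novikov, and the auxiliary measure agrees with $\M$ on $\cG^Z_{\rho_{\eps,\delta}}$ by optional sampling. From there both proofs use the same pathwise-uniqueness-implies-uniqueness-in-law principle.

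The second difference is in removing the localisation. The paper passes to the limit inside the martingale-problem identity, using \eqref{eq:intAd2} and \eqref{eq:intA}. It also proves $\tau_{\cD_\eps}\wedge s\to\tau_\cD\wedge s$ $\Q$-a.s., including on $\{\tau_\cD\le s\}$. You instead read off from \eqref{eq:MQ} that $\M(\tau_\cD\le T)=0$, so only the easy event $\{\tau_\cD>s\}$ matters. On that event the path on $[t,s]$ is a compact subset of $\cC$, where $u$ is continuous and positive. You then compare finite-dimensional distributions by bounded convergence, using \eqref{eq:intA} only on the $X^*$ side. This makes your final step shorter than the paper's, and the paper's treatment of $\{\tau_\cD\le s\}$ is not needed, since that event is $\M$-null.
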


\begin{proof}
Fix $(t,x)\in\cC_\eps$ and let us define
$\tau_{\cD_\eps}\coloneqq\inf\{r\in[t,T]:(r,Z_r)\notin \cC_\eps\}$ with $\inf\varnothing=+\infty$.
Letting $\E^\M_{t,x}$ be the expectation under the measure $\M_{t,x}$ and $\psi\in C^\infty_c(\R^d)$ an arbitrary test function, we have for any 
$t\le s< T$,
\begin{equation}
\begin{aligned}
&\E^\M_{t,x}\Big[\psi(Z_{s\wedge\tau_{\cD_\eps}})-\psi(x)\Big]\\
&=\frac{1}{u(t,x)}\E^\Q_{t,x}\Big[\Theta^\cD_{s\wedge\tau_{\cD_\eps}}\psi(Z_{s\wedge\tau_{\cD_\eps}})-u(t,x)\psi(x)\Big]\\
&=\frac{1}{u(t,x)}\E^\Q_{t,x}\Big[\int_t^{s\wedge\tau_{\cD_\eps}}\!\!\Big(\Theta^\cD_r (\cL\psi)(r,Z_r)+\e^{-\frac12\int_t^r f(v,Z_v)\ud v}\langle\sigma\sigma^\top\nabla u(r,Z_r),\nabla\psi(Z_r)\rangle\Big)\ud r\Big]\\
&=\frac{1}{u(t,x)}\E^\Q_{t,x}\Big[\int_t^{s\wedge\tau_{\cD_\eps}}\!\!\Theta^\cD_r\Big( (\cL\psi)(r,Z_r)+\langle\sigma\sigma^\top\nabla\ln u(r,Z_r),\nabla\psi(Z_r)\rangle\Big)\ud r\Big]\\
&=\frac{1}{u(t,x)}\E^\Q_{t,x}\Big[\Theta^\cD_{T\wedge\tau_{\cD_\eps}}\int_t^{s\wedge\tau_{\cD_\eps}}\!\!\Big( (\cL\psi)(r,Z_r)+\langle\sigma\sigma^\top\nabla\ln u(r,Z_r),\nabla\psi(Z_r)\rangle\Big)\ud r\Big]\\
&=\E^\M_{t,x}\Big[\int_t^{s\wedge\tau_{\cD_\eps}}\!\!\Big( (\cL\psi)(r,Z_r)+\langle\sigma\sigma^\top\nabla\ln u(r,Z_r),\nabla\psi(Z_r)\rangle\Big)\ud r\Big],
\end{aligned}
\end{equation}
where we can apply It\^o's formula in the second equality because $u\in C^{1,2}(\cC_{[0,T)})$ and in the third equality we used \eqref{eq:Theta}. The fourth equality holds by the martingale property of $\Theta^\cD_r$. Recalling the definition of $\alpha^*(t,x)=\sigma^\top(t,x) \nabla\ln u(t,x)$ for $(t,x)\in\cC_{[0,T)}$, the equation above can be rewritten as 
\begin{equation}\label{eq:X*b}
\E^\M_{t,x}\Big[\psi(Z_{s\wedge\tau_{\cD_\eps}})-\psi(x)-\int_t^{s\wedge\tau_{\cD_\eps}}(\cL^*\psi)(r,Z_r)\ud r\Big]=0,
\end{equation}
with
\begin{equation}\label{eq:L*}
(\cL^* \psi)(t,z)\coloneqq\tfrac{1}{2}\mathrm{tr}(\sigma\sigma^\top(t,z) D^2 \psi(z))+\langle \mu(t,z)+\sigma(t,z) \alpha^*(t,z),\nabla \psi (z)\rangle.
\end{equation}
Thus, the law of the process $(Z_{r\wedge\tau_{\cD_\eps}})_{r\in[t,s]}$ under the measure $\M$ solves the martingale problem associated to the operator $\cL^*$. Then, on the filtered probability space $(\Omega',\cG,(\cG^Z_{r\wedge\tau_{\cD_\eps}})_{r\in[t,T]},\M)$ there is a Brownian motion $B$ such that the pair $(Z_{r\wedge\tau_{\cD_\eps}},B_{r\wedge\tau_{\cD_\eps}})_{r\in[t,T)}$ is a weak solution of \eqref{eq:X*loc3} (cf.\ \cite[Prop.\ 5.4.6]{KS}). However, the SDE in \eqref{eq:X*loc3} admits a unique strong solution (cf.\ Proposition \ref{prop:weakX})
and therefore for any $s\in[0,T)$ the law of the process $(Z_{r\wedge\tau_{\cD_\eps}})_{r\in[t,s]}$ is the same as the law of the process $(X^*_{r\wedge\rho^*_{\cD_\eps}})_{r\in[t,s]}$. We can let $\eps\to 0$ in \eqref{eq:X*b}, use dominated convergence and the bound in \eqref{eq:intAd2} along with the limit in \eqref{eq:intA} to extend the equivalence in law up to the stopping time $\tau_\cD$, because
\[
\lim_{\eps\to 0}\tau_{\cD_\eps}\wedge s=\tau_{\cD}\wedge s,\quad \P-a.s.
\]

The latter claim can be shown as follows. With probability one $\tau_{\cD_\eps}\le \tau_{\cD_{\eps'}}\le \tau_\cD$ for all $\eps'<\eps$. Then $\lim_{\eps\to 0}\tau_{\cD_\eps}\wedge s\le \tau_\cD\wedge s$, $\P$-a.s. In order to prove the reverse inequality we observe that on the event $\{\tau_\cD>s\}$ it must be $\inf_{0\le r\le s}\mathrm{dist}((r,Z_r),\cD)>0$ and therefore, by Assumption \ref{ass:u}-(i), for each $\omega\in\{\tau_\cD>s\}$ there is $\eps_\omega>0$ such that $u(r,Z_r(\omega))>\eps_\omega$, for all $r\in[t,s]$ (i.e., $(r,Z_r(\omega))\in\cC_{\eps_\omega}$ for all $r\in[t,s]$); that yields, $\lim_{\eps\to 0}\tau_{\cD_\eps}\wedge s = s$ on $\{\tau_\cD>s\}$. Instead, on the event $\{\tau_\cD\le s\}$, we argue differently. Let $\omega\in\{\tau_\cD\le s\}$. If $\tau_\cD(\omega)=t$, it is clear that $\lim_{\eps\to 0}\tau_{\cD_\eps}(\omega)\wedge s \ge \tau_\cD(\omega)\wedge s$. If $\tau_\cD(\omega)>t$, for any $t<\delta<\tau_\cD(\omega)$ we have $\inf_{0\le r\le \delta}\mathrm{dist}((r,Z_r(\omega)),\cD)>0$. Then, by Assumption \ref{ass:u}-(i) there is $\eps_\omega>0$ such that $u(r,Z_r(\omega))>\eps_\omega$, for all $r\in[t,\delta]$. That yields $\lim_{\eps\to 0}\tau_{\cD_\eps}(\omega)\wedge s \ge \delta\wedge s=\delta$ and letting $\delta\uparrow \tau_\cD(\omega)$ we deduce $\lim_{\eps\to 0}\tau_{\cD_\eps}(\omega)\wedge s \ge \tau_\cD(\omega)\wedge s=\tau_{\cD}(\omega)$. Since $\omega\in\{\tau_\cD\le s\}$ was arbitrary, combining the two events $\{\tau_\cD>s\}$ and $\{\tau_\cD\le s\}$ we conclude that $\lim_{\eps\to 0}\tau_{\cD_\eps}\wedge s = \tau_\cD(\omega)\wedge s$ as needed.

Since $s\in[0,T)$ was arbitrary, we conclude the proof of the lemma.
\end{proof}

\begin{proposition}\label{prop:limXT}
Fix $(t,x)\in\cC$ and let \eqref{eq:intA} hold. Then $\P((T,X^*_T)\in\cC_T)=1$ and therefore $\P((s,X^*_s)\in\cC,\,\forall s\in[t,T])=1$, by Proposition \ref{prop:weakX}.
\end{proposition}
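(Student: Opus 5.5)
The plan is to transfer the question to the auxiliary process $Z$ under the measure $\M_{t,x}$ by means of Lemma \ref{lem:laws}. Under $\M_{t,x}$ the event $\{T<\tau_\cD\}$ has full measure, because the density in \eqref{eq:MQ} vanishes off this event. Under $\Q_{t,x}$, Assumption \ref{ass:u}-(ii) says that $(T,Z_T)\notin\partial\cC_T$ almost surely, and this transfers to $\M_{t,x}\ll\Q_{t,x}$. Moreover, on $\{T<\tau_\cD\}$ we have $(T,Z_T)\in\cC_T$ by definition of $\tau_\cD$. Therefore $\M_{t,x}((T,Z_T)\in\cC_T)=1$. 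It remains to carry this over to $X^*_T$.

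Lemma \ref{lem:laws} only identifies the laws on $[t,T)$, so I would pass to the limit $s\uparrow T$. By Proposition \ref{prop:weakX}, under \eqref{eq:intA} we have $\rho^*=T$ with $(s,X^*_s)\in\cC$ for $s<T$, $\P$-a.s., and $X^*_T=\lim_{s\uparrow T}X^*_s$ exists. On the $Z$ side, $\tau_\cD>T$ holds $\M$-a.s., and $Z$ has continuous paths. Since the laws of the continuous processes $(Z_s)_{s\in[t,T)}$ under $\M$ and $(X^*_s)_{s\in[t,T)}$ under $\P$ coincide on path space $C([t,T))$, the existence of the left limit at $T$ is a measurable property of the path. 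The map sending a path to its limit at $T$ (defined on the measurable set where the limit exists) is measurable. Hence the law of $X^*_T$ under $\P$ equals the law of $Z_T$ under $\M_{t,x}$, and $\P((T,X^*_T)\in\cC_T)=\M_{t,x}((T,Z_T)\in\cC_T)=1$.

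A more robust alternative avoids the path-space identification. It uses the function $u$, which is continuous on $\cO\setminus\partial\cC_T$ and positive exactly on $\cC$. For this I would show $\P(\lim_{s\uparrow T}u(s,X^*_s)>0)=1$. By Lemma \ref{lem:laws}, $u(s,X^*_s)$ under $\P$ has the same law as $u(s,Z_s)$ under $\M$. The latter converges $\M$-a.s. to $u(T,Z_T)=\e^{-g(Z_T)/2}>0$, because $(T,Z_T)\in\cC_T$ where $u$ is continuous. Similarly, the event $\{(T,X^*_T)\in\partial\cC_T\}$ can be handled through finite-dimensional approximation: $\mathrm{dist}((s,X^*_s),\partial\cC_T)$ has the same law as its $Z$-analogue, whose limit is positive $\M$-a.s. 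If $(T,X^*_T)\in\cD^\circ_T$, continuity of $u$ there would force $u(s,X^*_s)\to0$, which has probability zero. If $(T,X^*_T)\in\partial\cC_T$, the distance argument excludes it with probability one. Either way $(T,X^*_T)\in\cC_T$ almost surely.

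The main obstacle is the limiting step at $T$. Lemma \ref{lem:laws} is stated only for $s<T$, while the conclusion concerns the boundary time, where $u$ may be discontinuous, namely at $\partial\cC_T$. I would first try the path-space argument of the second paragraph, since equality of laws of continuous processes on $[t,T)$ determines the law of any measurable functional of the path, including the left limit. I would fall back on the $u$/distance argument only if measurability needed care.
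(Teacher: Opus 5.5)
Your main argument is correct, but it takes a different route from the paper.

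\textbf{Your route.} You identify the terminal law directly. Proposition \ref{prop:weakX} gives $\rho^*=T$ $\P$-a.s., and $\tau_\cD=\infty$ holds $\M_{t,x}$-a.s., so Lemma \ref{lem:laws} makes $X^*_T$ under $\P$ distributed as $Z_T$ under $\M_{t,x}$. Admissibility then follows because $\ud\M/\ud\Q$ vanishes off $\{T<\tau_\cD\}$, and on that event $(T,Z_T)\in\cC_T$ by the very definition of $\tau_\cD$. The trivial case $t=T$ should be set aside first. Two simplifications are available:
\begin{itemize}
\item[(i)] The path-space measurability discussion is not needed. Since $X^*_s\to X^*_T$ $\P$-a.s.\ and $Z_s\to Z_T$ pointwise, equality of the one-dimensional marginals for $s<T$ already forces equality of the limit laws, because limits in distribution are unique.
\item[(ii)] Your appeal to Assumption \ref{ass:u}-(ii) in the first paragraph is redundant. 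That assumption has already been used in identifying $\Theta^\cD_T$, which defines $\M$. Likewise \eqref{eq:intA} has already been used in Proposition \ref{prop:weakX} and Lemma \ref{lem:laws}.
\end{itemize}

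\textbf{The paper's route.} The paper never identifies the law of $X^*_T$. Instead it splits $\cD_T=\cD^\circ_T\cup\partial\cC_T$ and bounds the two bad events separately.
\begin{itemize}
\item[(i)] The event $\{(T,X^*_T)\in\cD^\circ_T\}$ is excluded pathwise, with no change of measure. On that event, continuity of $u$ on $\cD^\circ_T$ forces $\rho^*_\eps<T$ for every $\eps>0$, which has vanishing probability by \eqref{eq:intA}.
\item[(ii)] The event $\{(T,X^*_T)\in\partial\cC_T\}$ is controlled by a Fatou/Portmanteau estimate on the events $\{\mathrm{dist}((t_m,X^*_{t_m}),\partial\cC_T)\le 1/n\}$ at times $t_m<T$. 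These are transferred to $Z$ through the localized identity $\P(\,\cdot\,,t_m<\rho^*_\eps)=\M(\,\cdot\,,t_m<\tau_{\cD_\eps})$. They are then dominated by $\Q$-probabilities using $\Theta^\cD\le 1$ and $u(t,x)>0$, and finally shown to vanish by Assumption \ref{ass:u}-(ii) under $\Q$.
\end{itemize}

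\textbf{Comparison.} Your route is shorter and gives a stronger by-product, namely the law of the optimally controlled terminal state. It also makes transparent that admissibility is just the statement that $\M$ is carried by $\{T<\tau_\cD\}$. The paper's route evaluates the measure change only on events observed strictly before $T$, and only in the $\eps$-localized form obtained directly from strong uniqueness in the proof of Lemma \ref{lem:laws}. It also shows explicitly which hypothesis rules out which part of $\cD_T$. Your fallback argument is essentially the paper's strategy.
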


\begin{proof}
The claim is trivial for $(t,x)\in\cC_T$, thus we may assume with no loss of generality that $(t,x)\in\cC_{[0,T)}$.

We prove  that $\P((T,X^*_T)\in\cD_T)=0$. First we observe that 
\[
\P\big((T,X^*_T)\in\cD_T\big)\le \P\big((T,X^*_T)\in\cD^\circ_T\big)+\P\big((T,X^*_T)\in\partial\cC_T\big),
\]
with the notation introduced just before Assumption \ref{ass:u}. By Assumption \ref{ass:u}-(i) we know that for every $\omega\in\{(T,X^*_T)\in\cD^\circ_T\}$ it must be 
$\lim_{s\uparrow T}u\big(s,X^*_s(\omega)\big)=0$,
and, therefore $\rho^*_{\eps}(\omega)< T$ for every $\eps>0$. Taking a sequence $(\eps_n)_{n\in\N}$ decreasing to zero, we have $\omega\in\cap_{n\in\N}\{\rho^*_{\eps_n}<T\}$. Then,
\begin{equation}\label{eq:X*Dcirc}
\P\big((T,X^*_T)\in\cD^\circ_T\big)\le \P\big(\cap_{n\in\N}\{\rho^*_{\eps_n}<T\}\big)=\lim_{n\to\infty}\P(\rho^*_{\eps_n}<T)=0,
\end{equation}
by \eqref{eq:intA}.

Next, we write the event $\{(T,X^*_T)\in\partial\cC_T\}$ as
\[
\big\{(T,X^*_T)\in\partial\cC_T\big\}=\bigcap_{n\in\N}\bigcup_{m\in\N}\bigcap_{k\ge m}\Big\{\mathrm{dist}\big((t_k,X^*_{t_k}),\partial\cC_T\big)\le\tfrac1n\Big\},
\]
where $(t_k)_{k\in\N}\subset[0,T)$ is an arbitrary sequence that increases (strictly) to $T$. By monotone convergence and Fatou's lemma for probability measures we have
\[
\P\big((T,X^*_T)\in\partial\cC_T\big)\le \lim_{n\to\infty}\liminf_{m\to\infty}\P\Big(\mathrm{dist}\big((t_m,X^*_{t_m}),\partial\cC_T\big)\le\tfrac1n\Big).
\]
For fixed $m\in\N$ we have by \eqref{eq:intA}
\[
\P\Big(\mathrm{dist}\big((t_m,X^*_{t_m}),\partial\cC_T\big)\le\tfrac1n\Big)=\lim_{\eps\to 0}\P\Big(\mathrm{dist}\big((t_m,X^*_{t_m}),\partial\cC_T\big)\le\tfrac1n,t_m<\rho^*_{\eps}\Big).
\]
Then, recalling the notation $\tau_{\cD_\eps}$ from the proof of Lemma \ref{lem:laws}, the same lemma implies 
\begin{equation}
\begin{aligned}
&\P\Big(\mathrm{dist}\big((t_m,X^*_{t_m}),\partial\cC_T\big)\le\tfrac1n,\, t_m<\rho^*_{\eps}\Big)\\
&=\M\Big(\mathrm{dist}\big((t_m,Z_{t_m}),\partial\cC_T\big)\le\tfrac1n,\, t_m<\tau_{\cD_\eps}\Big)\\
&=\frac{1}{u(t,x)}\E^\Q\Big[\Theta^\cD_{t_m\wedge\tau_{\cD_\eps}}1_{\big\{\mathrm{dist}\big((t_m,Z_{t_m}),\partial\cC_T\big)\le\tfrac1n,\, t_m<\tau_{\cD_\eps}\big\}}\Big]\\
&\le \frac{1}{u(t,x)}\Q\Big(\mathrm{dist}\big((t_m,Z_{t_m}),\partial\cC_T\big)\le\tfrac1n,\, t_m<\tau_{\cD_\eps}\Big)\\
&\le \frac{1}{u(t,x)}\Q\Big(\mathrm{dist}\big((t_m,Z_{t_m}),\partial\cC_T\big)\le\tfrac1n\Big),
\end{aligned}
\end{equation}
where in the penultimate inequality we use that $0\le u\le 1$ and $u(t,x)>0$.

Combining the bounds above we obtain
\begin{equation}
\begin{aligned}
\P\big((T,X^*_T)\in\partial\cC_T\big)
&\le\lim_{n\to\infty}\liminf_{m\to\infty}\frac{1}{u(t,x)}\Q\Big(\mathrm{dist}\big((t_m,Z_{t_m}),\partial\cC_T\big)\le\tfrac1n\Big)\\
&\le\lim_{n\to\infty}\limsup_{m\to\infty}\frac{1}{u(t,x)}\Q\Big(\mathrm{dist}\big((t_m,Z_{t_m}),\partial\cC_T\big)\le\tfrac1n\Big)\\
&\le\lim_{n\to\infty}\frac{1}{u(t,x)}\Q\Big(\limsup_{m\to\infty}\Big\{\mathrm{dist}\big((t_m,Z_{t_m}),\partial\cC_T\big)\le\tfrac1n\Big\}\Big).
\end{aligned}
\end{equation}
By continuity of paths of $(s,Z_s)$ we deduce
\[
\limsup_{m\to\infty}\Big\{\mathrm{dist}\big((t_m,Z_{t_m}),\partial\cC_T\big)\le\tfrac1n\Big\}\subseteq\Big\{\mathrm{dist}\big((T,Z_{T}),\partial\cC_T\big)\le\tfrac2n\Big\}.
\]
Hence, 
\[
\P\big((T,X^*_T)\in\partial\cC_T\big)\le \frac{1}{u(t,x)}\lim_{n\to\infty}\Q\Big(\mathrm{dist}\big((T,Z_{T}),\partial\cC_T\big)\le\tfrac2n\Big)=0,
\]
where the final equality holds by Assumption \ref{ass:u}-(ii).

Combining the latter bound with \eqref{eq:X*Dcirc} we have shown that $\P((T,X^*_T)\in\cD_T)=0$, thus concluding the proof of the proposition.
\end{proof}

Now we are ready to prove Theorem \ref{thm:main}. In its proof we are going to verify condition \eqref{eq:intA} so that we will be able to use Propositions \ref{prop:weakX} and \ref{prop:limXT}.

\begin{proof}[{\bf Proof of Theorem \ref{thm:main}}]
Recall $V=-2\ln u$ from Corollary \ref{cor:V}. 
Fix arbitrary $\eps>0$ and take $(t,x)\in\cC_{\eps}$. Recall the notation $\rho^*_{\eps,\delta}$ introduced after Proposition \ref{prop:strong}. The same proposition guarantees that the process $X^*_{s\wedge\rho^*_{\eps,\delta}}$ is the unique strong solution of \eqref{eq:X*loc} for $s\in[t,T)$ with $X^*_t=x$, for any $\delta,\eps>0$. 

For any $s\in[t,T)$ we simplify notation by setting $\tau^{\,s}_{\eps,\delta}\coloneqq s\wedge\rho^*_{\eps,\delta}$. Applying Dynkin's formula and using the definition of the control $\alpha^*(r,X^*_r)$ from \eqref{eq:a*}, we obtain 
\begin{equation}\label{eq:V1}
\begin{aligned}
V(t,x)&=\E\Big[V(\tau^{\,s}_{\eps,\delta},X^{*}_{\tau^{\,s}_{\eps,\delta}})\!-\!\int_{t}^{\tau^{\,s}_{\eps,\delta}}\!\!\big(\partial_t V\!+\!\cL V \!+\!\langle\nabla V, \sigma \alpha^*\rangle\big)(r,X^{*}_r) \ud r\Big]\\
&=\E\Big[V(\tau^{\,s}_{\eps,\delta},X^*_{\tau^{\,s}_{\eps,\delta}})\!-\!\int_{t}^{\tau^{\,s}_{\eps,\delta}}\!\!\Big(\!\big(\partial_t V\!+\!\cH(\cdot,\nabla V, D^2 V)\big)(r,X^*_r)\!-\!|\alpha^*(r,X^*_r)|^2\Big)\ud r\Big]\\
&= \E\Big[ 
V(\tau^{\,s}_{\eps,\delta},X^*_{\tau^{\,s}_{\eps,\delta}})\!+\!\int_{t}^{\tau^{\,s}_{\eps,\delta}}\!\Big(f(r,X^{*}_r)\!+\!|\alpha^*(r,X^*_r)|^2\Big)\ud r\Big],
\end{aligned}
\end{equation}
where we used \eqref{eq:PDEV} and \eqref{eq:Ham} for the second and third equality. 

For $s\in[t,T)$ we have $\eps \le u(\tau^{\, s}_{\eps,\delta},X^*_{\tau^{\, s}_{\eps,\delta}})\le 1$ by Assumption \ref{ass:u}--(i), thus $0\le V(\tau^{\, s}_{\eps,\delta},X^*_{\tau^{\, s}_{\eps,\delta}})\le -2\ln\eps$. 
Letting $\delta\to 0$ in \eqref{eq:V1} we have $\tau^{\,s}_{\eps,\delta}\uparrow \rho^*_{\eps}\wedge s$ (cf.\ \eqref{eq:rhoeps}). We can use dominated convergence and monotone convergence 
to pass to the limit in \eqref{eq:V1}, along with continuity of the mapping $u\mapsto V(s\wedge u\wedge\rho^*_{\eps},X^*_{s\wedge u\wedge\rho^*_{\eps}})$, which is guaranteed by Assumption \ref{ass:u}-(i). 
Then we get
\begin{equation}\label{eq:Veps}
\begin{aligned}
V(t,x)=\E\Big[ 
V(s\wedge\rho^*_{\eps},X^*_{s\wedge\rho^*_{\eps}})\!+\!\int_{t}^{s\wedge\rho^*_{\eps}}\!\!\Big(f(r,X^{*}_r)\!+\!|\alpha^*(r,X^*_r)|^2\Big)\ud r\Big].
\end{aligned}
\end{equation}
Moreoever,
\[
1_{\{\rho^*_{\eps}\le s\}}V(s\wedge\rho^*_{\eps},X^*_{s\wedge\rho^*_{\eps}})=1_{\{\rho^*_{\eps}\le s\}}V(\rho^*_{\eps},X^*_{\rho^*_{\eps}})=-2\ln\eps 1_{\{\rho^*_{\eps}\le s\}}.
\]
Plugging the above into \eqref{eq:Veps} and recalling $V,f\ge 0$ yields $V(t,x)\ge -2\ln \eps\, \P(\rho^*_{\eps}\le s)$. 
Letting $s\uparrow T$ yields $-2\ln \eps\, \P(\rho^*_{\eps}< T)\le V(t,x)$ and then, taking limits as $\eps\to 0$, we obtain 
\begin{equation}
\lim_{\eps\to 0}\P(\rho^*_{\eps}< T)=0.
\end{equation}
Thus, \eqref{eq:intA} holds and therefore Propositions \ref{prop:weakX} and \ref{prop:limXT} apply.

Letting $s\uparrow T$ in \eqref{eq:Veps} and using Fatou's lemma we obtain 
\begin{equation}
\begin{aligned}
V(t,x)&\ge \E\Big[ 
V(T\wedge\rho^*_{\eps},X^*_{T\wedge\rho^*_{\eps}})\!+\!\int_{t}^{T\wedge\rho^*_{\eps}}\!\!\Big(f(r,X^{*}_r)\!+\!|\alpha^*(r,X^*_r)|^2\Big)\ud r\Big]\\
&\ge \E\Big[ 
1_{\{\rho^*_{\eps}>T\}}g(X^*_{T})\!+\!\int_{t}^{T\wedge\rho^*_{\eps}}\!\!\Big(f(r,X^{*}_r)\!+\!|\alpha^*(r,X^*_r)|^2\Big)\ud r\Big],
\end{aligned}
\end{equation}
where the first inequality holds because $(s,X^*_s)\in\cC$ for all $s\in[t,T]$ and $V\in C(\cC)$ and
the second one holds because $V\ge 0$. By Proposition \ref{prop:limXT}, we have $\P(\rho^*=T)=1$ and $\rho^*_{\eps}\uparrow \rho^*$ when $\eps\downarrow 0$ (possibly along a subsequence). We use Fatou's lemma once again to get
\begin{equation}\label{eq:Veps2}
\begin{aligned}
V(t,x)\ge \E\Big[ 
g(X^*_{T})\!+\!\int_{t}^{T}\!\!\Big(f(r,X^{*}_r)\!+\!|\alpha^*(r,X^*_r)|^2\Big)\ud r\Big]\ge v(t,x).
\end{aligned}
\end{equation}

Next we prove the reverse inequality.
We define $V^\lambda(t,x)\coloneqq-2\ln (u(t,x)+\lambda)$ for $\lambda\in (0,1)$. Notice that $V^\lambda\in C^{1,2}(\cC_{[0,T)})\cap C(\cC)$ and $V^\lambda\uparrow V$ as $\lambda\downarrow 0$. Moreover, the convergence is uniform on compact subsets of $\cC$ by Dini's theorem because both $V^\lambda$ and $V$ are continuous on $\cC$. 
From direct calculations it is not difficult to show that 
\begin{equation}\label{eq:HJBeps}
\left\{
\begin{array}{ll}
\partial_t V^\lambda(t,x)+\cH\big(t,x,\nabla V^\lambda(t,x),D^2 V^\lambda(t,x)\big)+\big(\frac{u}{u+\lambda}f\big)(t,x)=0,& (t,x)\in\cC_{[0,T)},\\[+3pt]
V^\lambda(T,x)=-2\ln(\e^{-\frac12 g(x)}+\lambda), & (T,x)\in\cC_T,\\[+3pt]
-2\ln (1+\lambda)\le V^\lambda(t,x)\le -2\ln \lambda, &(t,x)\in\cO,
\end{array}
\right.
\end{equation}
where we recall
\[
-\tfrac{1}{4}\big|\sigma^\top\nabla V^\lambda \big|^2(t,x)=\inf_{\alpha\in\R^{d'}}\big(|\alpha|^2+\langle\nabla V^\lambda,\sigma\alpha\rangle\big)(t,x).
\]
Notice that by definition $V^\lambda =-2\ln \lambda$ on $\cD$. 

Let $a\in\cA^\cD_{t,x}$ be arbitrary (cf.\ Definition \ref{def:admissible}). Recall the dynamics of $X^a$ in \eqref{eq:Xa} and notice that by admissibility it must be $(s,X^{a}_s)\in \cC$ for all $s\in[t,T]$, $\P$-a.s. 
Let 
\[
\rho^a_{\eps,\delta}\coloneqq\inf\{s\in[t,T]:(s,X^a_s)\notin\cC_{\eps,\delta}\}\wedge T\quad\text{and}\quad\rho^a_{\eps}\coloneqq\inf\{s\in[t,T]:(s,X^a_s)\notin\cC_{\eps}\}\wedge T,
\] 
with $\inf\varnothing=+\infty$.  
Since $X^a_t=x$, an application of Dynkin's formula and \eqref{eq:HJBeps} give, for $s\in[t,T)$
\begin{equation}
\begin{aligned}
V^\lambda(t,x)
=&\E\Big[V^\lambda(s\wedge\rho^a_{\eps,\delta},X^{a}_{s\wedge\rho^a_{\eps,\delta}})\Big]\\
&-\E\Big[\int_{t}^{s\wedge\rho^a_{\eps,\delta}}\Big(\big(\partial_t V^\lambda+\cL V^\lambda\big)(r,X^{a}_r)+\langle\nabla V^\lambda(r,X^{a}_r),\sigma(r,X^{a}_r) a_r\rangle\Big)\ud r\Big]\\
\le& \E\Big[V^\lambda(s\wedge\rho^a_{\eps,\delta},X^{a}_{s\wedge\rho^a_{\eps,\delta}})\!-\!\int_{t}^{s\wedge\rho^a_{\eps,\delta}}\!\!\Big(\!\big(\partial_t V^\lambda\!+\!\cH\big(\cdot,\nabla V^\lambda,D^2 V^\lambda\big) \big)(r,X^{a}_r)\!-\!|a_r|^2\Big)\ud r\Big]\\
=& \E\Big[V^\lambda(s\wedge\rho^a_{\eps,\delta},X^{a}_{s\wedge\rho^a_{\eps,\delta}})\!+\!\int_{t}^{s\wedge\rho^a_{\eps,\delta}}\!\!\Big(\big(\frac{u}{u+\lambda}f\big)(r,X^{a}_r)\!+\!|a_r|^2\Big)\ud r\Big].
\end{aligned}
\end{equation}
By continuity of $u$ (hence of $V^\lambda$) in $\cC$ and admissibility of $a\in\cA_{t,x}^\cD$, letting first $s\to T$ and then $\delta,\eps\to 0$ we get $\rho^a_{\eps,\delta}\wedge s \uparrow T$ with $(T,X_T^{a})\in\cC_T$ and therefore
\[
\lim_{\delta,\eps\to 0}\lim_{s\to T}V^\lambda(s\wedge\rho^a_{\eps,\delta},X^{a}_{s\wedge\rho^a_{\eps,\delta}})=-2\ln\big( \e^{-\frac12 g(X^{a}_T)}+\lambda\big),\quad\P-a.s.
\]
Thanks to boundedness of $V^\lambda$ and positivity of the integral, letting first $s\to T$ and then $\delta,\eps\to 0$, we apply dominated convergence and monotone convergence to arrive at 
\begin{equation}
\begin{aligned}
V^\lambda(t,x)
\le \E\Big[ 
-2\ln\big( \e^{-\frac12 g(X^{a}_T)}+\lambda\big)\!+\!\int_{t}^{T}\!\!\Big(\big(\frac{u}{u+\lambda}f\big)(r,X^{a}_r)\!+\!|a_r|^2\Big)\ud r\Big].
\end{aligned}
\end{equation}
Finally, letting $\lambda\downarrow 0$ we invoke monotone convergence to obtain
\[
V(t,x)\le \E\Big[ g(X^{t,x;a}_T)\!+\!\int_{t}^{T}\!\!\Big(f(r,X^{a}_r)\!+\!|a_r|^2\Big)\ud r\Big]
\]
and by arbitrariness of $a\in\cA^{\cD}_{t,x}$ we obtain $V\le v$ as needed. Combining that with \eqref{eq:Veps2} gives $V=v$ and optimality of the dynamics $(X^*_s)_{s\in[t,T]}$.
\end{proof}

\section{Sufficient conditions for the continuity of $u$}\label{sec:suffcond}

Assumption \ref{ass:u}-(i) may be difficult to check in practice, when the form of the function $u$ is not known explicitly. In those cases, a sufficient condition would be the continuity of the function $u$ in $\mathcal O$. Unfortunately that requirement may be too restrictive
because $u(t,z)=0$ for $(T,z)\in\cD_T$ but $u(T,z)=\exp(-\frac12 g(z))$ for $(T,z)\in\cC_T$. Unless $g(z)\to\infty$ for $z$ approaching any $z_0$ such that $(T,z_0)\in\cD_T\cap\overline{\cC_T}=\partial \cC_T$, there is always going to be a discontinuity at points of $\partial\cC_T$ (compare with Examples \ref{ex:1} and \ref{ex:2}). 

In this section we provide some results that address this issue. Recall the notations $\cD^\circ$, $\cD_{[0,T)}^\circ$ and $\cD^\circ_T$ introduced in \eqref{eq:setnot}. 
Further, we denote 
\[
\partial\cD=\overline\cC\cap\cD\quad\text{and}\quad\partial\cD_{[0,T)}=(\overline\cC\cap\cD)_{[0,T)}.
\]
\begin{proposition}\label{prop:suffcond}
Assume $f\in C(\overline\cC)$ and $g\in C(\overline{\cC_T})$. Assume that 
for any $\delta>0$ and any sequence $(t_n,z_n)_{n\in\N}\subset\cO$ converging to $(t,z)\in \cO$
\begin{equation}\label{eq:Zcont}
\lim_{n\to\infty}\E^\Q\Big[\sup_{s\in[0,\delta]}\big|Z^{t_n,z_n}_{(t_n+s)\wedge T}-Z^{t,z}_{(t+s)\wedge T}\big|\Big]=0.
\end{equation}
Assume further that Assumption \ref{ass:u}-(ii) holds.
\begin{itemize}
\item[(a)] If $(t,z)\in\cC$ is such that 
\begin{equation}\label{eq:regD}
\lim_{n\to\infty}\Q\big(\big|\tau^{t_n,z_n}_\cD\wedge (2T)-\tau^{t,z}_\cD\wedge (2T)\big|>\eps\big)=0,\quad \forall \eps>0,
\end{equation}
for any sequence $(t_n,z_n)_{n\in\N}\subset\cC$ converging to $(t,z)$, 
then $\lim_{n\to\infty}u(t_n,z_n)=u(t,z)$. 

\item[(b)] If $(t,z)\in\partial\cD_{[0,T)}\cup\cD^\circ_T$ is such that \eqref{eq:regD} holds for any any sequence $(t_n,z_n)_{n\in\N}\subset\cC$ converging to $(t,z)$, then $\lim_{n\to\infty}u(t_n,z_n)=u(t,z)=0$.
\end{itemize}

If \eqref{eq:regD} holds for any $(t,z)\in \cC\cup\partial \cD_{[0,T)}\cup \cD^\circ_T$ and any sequence $(t_n,z_n)_{n\in\N}\subset\cC$ converging to $(t,z)$, then Assumption \ref{ass:u}-(i) holds. 
\end{proposition}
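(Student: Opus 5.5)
We work directly with the representation \eqref{eq:u}: for $(t_n,z_n)\to(t,z)$, write
\[
u(t_n,z_n)=\E^\Q\big[F_n\,1_{\{T<\tau^{t_n,z_n}_\cD\}}\big],\qquad F_n\coloneqq\exp\Big(-\tfrac12\int_{t_n}^T f(s,Z^{t_n,z_n}_s)\ud s-\tfrac12 g(Z^{t_n,z_n}_T)\Big),
\]
with $0\le F_n\le 1$, and set $F$ and $\tau_\cD\coloneqq\tau^{t,z}_\cD$ analogously for the limit point. Since the integrand is bounded, dominated convergence reduces everything to convergence in probability of $F_n1_{\{T<\tau^{t_n,z_n}_\cD\}}$ to $F1_{\{T<\tau_\cD\}}$. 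We pass to a subsequence along which the relevant quantities converge a.s. Condition \eqref{eq:Zcont}, applied with $\delta=T$, gives $\sup_s|Z^{t_n,z_n}_{(t_n+s)\wedge T}-Z^{t,z}_{(t+s)\wedge T}|\to0$ in $L^1$, so a further subsequence converges uniformly a.s. Condition \eqref{eq:regD} gives $\tau^{t_n,z_n}_\cD\wedge 2T\to\tau_\cD\wedge 2T$ in probability, so yet another subsequence converges a.s.

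For part (a) we split according to the events $\{\tau_\cD\le T\}$ and $\{\tau_\cD=\infty\}$. On $\{\tau_\cD=\infty\}$ we have $\tau_\cD\wedge 2T=2T$, so $\tau^{t_n,z_n}_\cD\wedge 2T>T$ eventually, which forces $\tau^{t_n,z_n}_\cD=\infty$ because hitting times are either $\le T$ or $=\infty$. Thus $1_{\{T<\tau^{t_n,z_n}_\cD\}}\to1$ on this event. Moreover, the whole limiting path $(s,Z^{t,z}_s)$, $s\in[t,T]$, stays in $\cC$. By Assumption \ref{ass:u}-(ii), a.s.\ $(T,Z_T)\notin\partial\cC_T$, so $(T,Z^{t,z}_T)\in\cC_T$ lies in the relatively open part where $g$ is continuous. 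Uniform convergence of the paths together with $f\in C(\overline\cC)$ then gives $\int_{t_n}^T f(s,Z^{t_n,z_n}_s)\ud s\to\int_t^T f(s,Z^{t,z}_s)\ud s$; here we use that $f$ is bounded on a compact neighbourhood of the limit path, which is obtained after time-shifting the paths appropriately. Likewise $g(Z^{t_n,z_n}_T)\to g(Z^{t,z}_T)$, since $g\in C(\overline{\cC_T})$ and $Z^{t_n,z_n}_T\in\cC_T$ on $\{T<\tau^{t_n,z_n}_\cD\}$. On $\{\tau_\cD\le T\}$ we have $\tau_\cD<2T$, so $\tau^{t_n,z_n}_\cD\le T$ eventually. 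Hence $1_{\{T<\tau^{t_n,z_n}_\cD\}}\to0=1_{\{T<\tau_\cD\}}$, and both sides vanish. Since every subsequence admits a further subsequence along which the limit is $u(t,z)$, the full sequence converges.

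For part (b), with $(t,z)\in\partial\cD_{[0,T)}\cup\cD^\circ_T$, we have $\tau_\cD=t\le T$ identically. Condition \eqref{eq:regD} then yields $\Q(\tau^{t_n,z_n}_\cD>T)\le\Q(|\tau^{t_n,z_n}_\cD\wedge2T-t|>T-t+\eta)$, where we may take $\eta>0$ small (or argue directly with $\eps<T-t$). Since $\tau^{t_n,z_n}_\cD\in[t_n,T]\cup\{\infty\}$ and $\infty\wedge2T=2T$ lies at distance at least $T$ from $t$, this probability tends to $0$. Therefore $u(t_n,z_n)\le\Q(\tau^{t_n,z_n}_\cD>T)\to0=u(t,z)$. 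The final claim assembles the pieces. Continuity at points of $\cC$ and at the listed boundary points is tested by sequences in $\cC$ via (a) and (b). Sequences in $\cD$ are trivial because $u\equiv0$ there, and $\cD^\circ_{[0,T)}$ is open with $u=0$ on it. Together this gives continuity on $\cO\setminus\partial\cC_T$, which is Assumption \ref{ass:u}-(i).

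The main obstacle is the convergence of the functional $F_n$ in (a), and in particular the terminal term $g(Z_T)$. Here one must use Assumption \ref{ass:u}-(ii) to ensure that the limit endpoint avoids the corner set $\partial\cC_T$, where $g$ could be unbounded or discontinuous relative to $\cD$. A secondary difficulty is the mismatch of starting times $t_n\ne t$ in the running cost; we handle it through the formulation of \eqref{eq:Zcont} with the time-shift $(t_n+s)\wedge T$, together with the bound $|\int_{t_n}^{t}f|\le |t_n-t|\sup f$ on a compact set.
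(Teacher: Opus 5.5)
Your proof is correct and follows essentially the paper's route: you extract a subsequence along which \eqref{eq:Zcont} and \eqref{eq:regD} give a.s.\ convergence, apply dominated convergence on the joint survival event, and use the $2T$ truncation to dispose of the mismatch events $\{\tau_\cD\le T<\tau^{n}_\cD\}$ and $\{\tau^{n}_\cD\le T<\tau_\cD\}$; the paper bounds these directly by the probability in \eqref{eq:regD} instead of passing through a.s.\ convergence, and it treats (b) by the same three-term estimate rather than your direct bound $u(t_n,z_n)\le\Q(\tau^{t_n,z_n}_\cD=\infty)$. One small correction to your discussion of the ``main obstacle'': Assumption~\ref{ass:u}-(ii) is not needed where you invoke it (nor is it used in the paper's proof), because on $\{\tau_\cD=\infty\}$ the endpoint $(T,Z_T)$ already lies in $\cC_T$ by the definition of $\tau_\cD$, and in any case $g$ is assumed continuous on all of $\overline{\cC_T}$, corner points included.
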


For clarity we notice that in Example \ref{ex:1} it is indeed possible to construct sequences $(t_n,z_n)_{n\in\N}\subset\cC$ converging to $(t,z)\in\cD^\circ_T$, whereas this is not possible in, e.g., Example \ref{ex:2}. We also notice that, by definition, either $\tau_\cD(\omega)\le T$ or $\tau_\cD(\omega)=\infty$; then, the limit in probability in \eqref{eq:regD} should be read as saying that conditionally on $\tau^{t,z}_\cD=\infty$ (or on $\tau^{t,z}_\cD<\infty$), the limit in probability of $\tau^{t_n,z_n}_\cD$ is infinite (or $\tau^{t,z}_\cD$). The choice of $2T$ in \eqref{eq:regD} is arbitrary; any $T'>T$ would equally serve the purpose.

\begin{proof}[{\bf Proof of Proposition \ref{prop:suffcond}}]
The argument of proof is the same under (a) and (b). Thus we fix $(t,z)\in\cC\cup \partial\cD_{[0,T)}\cup\cD^\circ_T$ and take a sequence $(t_n,z_n)_{n\in\N}\subseteq\cC$ converging to $(t,z)$. 
Arguing by contradiction let us assume that 
\begin{equation}\label{eq:contrad}
\limsup_{n\to\infty}\big|u(t_n,z_n)-u(t,z)\big| > 0.
\end{equation}
With no loss of generality, up to selecting a subsequence, we can assume that the ``$\limsup$'' is actually a ``$\lim$'' and so we do that without further mentioning.

Thanks to \eqref{eq:Zcont} and \eqref{eq:regD}, there is $\Omega_0\in\cF$ with $\Q(\Omega_0)=1$ and a subsequence $(t_{n_k},z_{n_k})_{k\in\N}$ for which
\begin{equation}\label{eq:tauconv}
\lim_{k\to\infty}\sup_{s\in[0,\delta]}\big|Z^{t_{n_k},z_{n_k}}_{t_{n_k}+s}(\omega)-Z^{t,z}_{t+s}(\omega)\big|=0\quad\mbox{and}\quad \lim_{k\to\infty}\tau^{t_{n_k},z_{n_k}}_\cD(\omega)=\tau^{t,z}_\cD(\omega),
\end{equation}
for\ all\ $\omega\in\Omega_0$.
Set $\tau^k_\cD=\tau^{t_{n_k},z_{n_k}}_\cD$, $Z^{k}=Z^{t_{n_k},z_{n_k}}$, $\tau_\cD=\tau_\cD^{t,z}$ and $Z=Z^{t,z}$ for simplicity. Then, for each $k\in\N$
\begin{equation}\label{eq:uku}
\begin{aligned}
&\big|u(t_{n_k},z_{n_k})-u(t,z)\big|\\
&\le\Big|\E^\Q\Big[\Big(\e^{-\frac{1}{2}\int_{t_{n_k}}^T f(s,Z^k_s)\ud s-\frac{1}{2}g(Z^k_T)}-\e^{-\frac{1}{2}\int_{t}^T f(s,Z_s)\ud s-\frac{1}{2}g(Z_T)}\Big)1_{\{T<\tau^k_\cD\wedge\tau_\cD\}}\Big]\Big|\\
&\quad+\E^\Q\Big[\e^{-\frac{1}{2}\int_{t_{n_k}}^T f(s,Z^k_s)\ud s-\frac{1}{2} g(Z^k_T)}1_{\{\tau_\cD\le T<\tau^k_\cD\}}\Big]\\
&\quad+\E^\Q\Big[\e^{-\frac{1}{2}\int_{t}^T f(s,Z_s)\ud s-\frac{1}{2} g(Z_T)}1_{\{\tau^k_\cD\le T<\tau_\cD\}}\Big].
\end{aligned}
\end{equation}
By dominated convergence we have
\begin{equation}
\begin{aligned}
&\lim_{k\to\infty}\Big|\E^\Q\Big[\Big(\e^{-\frac{1}{2}\int_{t_{n_k}}^T f(s,Z^k_s)\ud s-\frac{1}{2}g(Z^k_T)}-\e^{-\frac{1}{2}\int_{t}^T f(s,Z_s)\ud s-\frac{1}{2}g(Z_T)}\Big)1_{\{T<\tau^k_\cD\wedge\tau_\cD\}}\Big]\Big|\\
&\le \lim_{k\to\infty}\E^\Q\Big[\Big|\e^{-\frac{1}{2}\int_{t_{n_k}}^T  f(s,Z^k_s)\ud s-\frac{1}{2} g(Z^k_T)}-\e^{-\frac{1}{2}\int_{t}^T  f(s,Z_s)\ud s-\frac{1}{2} g(Z_T)}\Big|1_{\{T<\tau^k_\cD\wedge\tau_\cD\}}\Big]\\
&= \E^\Q\Big[\lim_{k\to\infty}\Big|\e^{-\frac{1}{2}\int_{t_{n_k}}^T  f(s,Z^k_s)\ud s-\frac{1}{2} g(Z^k_T)}-\e^{-\frac{1}{2}\int_{t}^T  f(s,Z_s)\ud s-\frac{1}{2} g(Z_T)}\Big|1_{\{T<\tau^k_\cD\wedge\tau_\cD\}}\Big]=0.
\end{aligned}
\end{equation}

For the second and third term on the right-hand side of \eqref{eq:uku} we argue in a different way. 
We recall that $\{T<\tau_\cD\}=\{\tau_\cD=\infty\}$ and $\{T<\tau^k_\cD\}=\{\tau^k_\cD=\infty\}$, by definition of $\tau_\cD$ and $\tau_\cD^k$. Using this fact we have, for $0<\eps<T$
\begin{equation}\label{eq:2nd}
\begin{aligned}
&\E^\Q\Big[\e^{-\frac{1}{2}\int_{t_{n_k}}^T f(s,Z^k_s)\ud s-\frac{1}{2} g(Z^k_T)}1_{\{\tau_\cD\le T<\tau^k_\cD\}}\Big]\\
&\le\Q\big(\tau_\cD\le T<\tau^k_\cD\big)\\
&= \Q\big(\tau_\cD\le T, \tau^k_\cD=\infty\big)\le \Q\big(|\tau_\cD\wedge(2T)-\tau^k_\cD\wedge(2T)|>\eps\big),
\end{aligned}
\end{equation}
and analogously
\begin{equation}\label{eq:3rd}
\begin{aligned}
&\E^\Q\Big[\e^{-\frac{1}{2}\int_{t}^T f(s,Z_s)\ud s-\frac{1}{2} g(Z_T)}1_{\{\tau^k_\cD\le T<\tau_\cD\}}\Big]\\
&\le\Q\big(\tau^k_\cD\le T<\tau_\cD\big)\\
&= \Q\big(\tau^k_\cD\le T, \tau_\cD=\infty\big)\le \Q\big(|\tau_\cD\wedge(2T)-\tau^k_\cD\wedge(2T)|>\eps\big).
\end{aligned}
\end{equation}

Letting $k\to\infty$ in \eqref{eq:2nd} and \eqref{eq:3rd} we reach a contradiction with \eqref{eq:contrad}. 
Then 
\[
\lim_{n\to\infty}u(t_n,z_n)=u(t,z),
\]
as needed for (a) and (b). If \eqref{eq:regD} holds for any $(t,z)\in\cC\cup\partial\cD_{[0,T)}\cup\cD^\circ_T$, the results from (a) and (b) together with $u=0$ on $\cD$ imply continuity of $u$ on $\cO\setminus\partial\cC_T$. Then Assumption \ref{ass:u}-(i) holds.
\end{proof}

\begin{remark}\label{rem:suff}
Some comments about the assumptions in the proposition above are in order:
\begin{itemize}
\item[(a)] Condition \eqref{eq:Zcont} holds if, for example, the coefficients $\mu$ and $\sigma$ are Lipschitz continuous in the spatial variable, uniformly for $t\in[0,T]$, thanks to standard estimates on SDEs \cite[Thm.\ 2.5.9]{K}. 

\item[(b)] The condition in \eqref{eq:regD} is closely related to a notion of regularity of the set $\cD$ in the sense of diffusions and it may be checked on a case-by-case basis. Numerous sufficient conditions for \eqref{eq:regD} exist and a summary of known results with appropriate references can be found in \cite[Sec.\ 2]{DeAP20}. 
\end{itemize}
\end{remark}

\subsection{Examples of regular sets $\cD$}
For completeness we close the section with a study of some sufficient conditions for \eqref{eq:regD}. We divide the analysis into two main cases: in one case the uncontrolled process $(t,Z)$ cannot hit the forbidden set $\cD$ exactly at time $T$ (cf.\ Assumption \ref{ass:interior} for a formal statement) and if it hits $\cD_{[0,T)}$ it also immediately enters $\cD^\circ_{[0,T)}$, as in Example~\ref{ex:2}; in the other case, the process $(t,Z)$ can hit the interior set $\cD^\circ_T$ but the overall geometry of $\cD$ is rather special as in, e.g., Examples \ref{ex:1} and \ref{ex:3}.

The first {\em hitting} time of a set $A\subset \cO$ for the process $(s,Z_{s})_{s\in[t,T]}$ is denoted 
\[
\sigma_A=\inf\{s\in(t,T]:(s,Z_s)\in A\},
\]
with our usual convention $\inf\varnothing=\infty$.
In line with standard terminology from the theory of Markov processes (cf.\ \cite{BG}) we say that a set $A\subset \cO$ is regular in the sense of diffusions if $\Q_{t,z}(\sigma_A=t)=1$ for all $(t,z)\in\overline A$
with $t<T$. Notice that we require that the initial point $(t,z)$ be such that $t<T$ because the section $A_T$ cannot be regular as there is no dynamics after time $T$ in our problem.

First we focus on the case when the uncontrolled process $(t,Z)$ cannot hit the portion $\cD^\circ_T$ of the forbidden set $\cD$.
\begin{assumption}\label{ass:interior}
For $(T,z)\in\cD^\circ_T$ there is $\eps>0$ and a ball $B_\eps(z)$ in $\R^d$ of radius $\eps$ and centered in $z$ such that $[T-\eps,T]\times B_\eps(z)\subset\cD$.
\end{assumption}
Assumption \ref{ass:interior} is not met in the Examples \ref{ex:1} and \ref{ex:3}. We will look at that type of setups in a separate statement. 
\begin{proposition}\label{prop:suffcond2}
Assume that \eqref{eq:Zcont} and Assumption \ref{ass:u}-(ii) hold. If Assumption \ref{ass:interior} holds and the set $\cD^\circ_{[0,T)}$ is regular in the sense of diffusions for the process $(s,Z_s)$, then \eqref{eq:regD} holds for any sequence $(t_n,z_n)_{n\in\N}\subset\cC_{[0,T)}$ converging to $(t,z)\in\overline{\cC}_{[0,T)}\cup\cC_{T}\cup\cD^\circ_T$. 
\end{proposition}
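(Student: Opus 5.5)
The plan is to prove \eqref{eq:regD} through almost sure convergence along subsequences. Fix $(t,z)$ in the stated set and a sequence $(t_n,z_n)\subset\cC_{[0,T)}$ converging to it. By \eqref{eq:Zcont} with $\delta=T$, any subsequence has a further subsequence, still written $(t_n,z_n)$, and an event $\Omega_0$ with $\Q(\Omega_0)=1$ on which
\[
\sup_{s\in[0,T]}\big|Z^{t_n,z_n}_{(t_n+s)\wedge T}-Z^{t,z}_{(t+s)\wedge T}\big|\to0 .
\]
So the time--space paths $(s,Z^{t_n,z_n}_s)$ converge to $(s,Z^{t,z}_s)$ uniformly, as compact curves, in the Hausdorff sense. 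It then suffices to show $\tau^{t_n,z_n}_\cD\wedge(2T)\to\tau^{t,z}_\cD\wedge(2T)$ almost surely along this subsequence, because a.s.\ convergence along a further subsequence of every subsequence gives convergence in probability. Write $\tau^n,\tau,Z^n,Z$ for brevity. I split the argument into a lower and an upper bound for $\liminf\tau^n$ and $\limsup\tau^n$.

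\emph{Lower bound.} Fix $s<\tau$, or $s=T$ if $\tau=\infty$. The curve $\{(r,Z_r):r\in[t,s]\}$ is compact and contained in $\cC$, and $\cD$ is closed, so this curve lies at positive distance from $\cD$. Uniform convergence then keeps $\{(r,Z^n_r):r\in[t_n,s]\}$ inside $\cC$ for large $n$, which gives $\tau^n>s$. When $\tau=\infty$ the same argument with $s=T$ yields $\tau^n=\infty$ eventually. Hence $\liminf_n\tau^n\wedge(2T)\ge\tau\wedge(2T)$. This half uses only continuity of paths; it also covers starting points in $\cC_T$, for which $\tau=\infty$.

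\emph{Upper bound.} It remains to treat $\omega$ with $\tau(\omega)\le T$, and I split according to whether $\tau<T$ or $\tau=T$.

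If $\tau=T$, then $(T,Z_T)\in\cD_T$. By Assumption~\ref{ass:u}-(ii), almost surely $(T,Z_T)\notin\partial\cC_T$, so $(T,Z_T)\in\cD^\circ_T$. Assumption~\ref{ass:interior} then provides $[T-\eps,T]\times B_\eps(Z_T)\subset\cD$, and by path continuity $(r,Z_r)$ lies in $\cD$ for some $r<T$. This contradicts $\tau=T$, so the case $\tau=T$ is $\Q$-null. (If the starting point is in $\cD^\circ_T$, the same neighbourhood forces $\tau^n\le T$ eventually, which matches $\tau=T$.)

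If $\tau<T$, the point $(\tau,Z_\tau)$ belongs to $\partial\cD_{[0,T)}$. I would use the strong Markov property at $\tau$ together with regularity of $\cD^\circ_{[0,T)}$, which says $\Q_{\tau,Z_\tau}(\sigma_{\cD^\circ}=\tau)=1$. This yields, a.s., for every $\eta>0$ some $r\in(\tau,\tau+\eta)$ with $(r,Z_r)$ in the \emph{open} set $\cD^\circ$. Uniform convergence then puts $(r,Z^n_r)\in\cD^\circ$ for large $n$, so $\limsup\tau^n\le\tau+\eta$.

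The main obstacle is this last step: regularity is only guaranteed at points of $\overline{\cD^\circ_{[0,T)}}$, so I must ensure $(\tau,Z_\tau)$ lies in that closure. I would first try to show that the process almost surely cannot first hit $\cD_{[0,T)}$ at a point outside $\overline{\cD^\circ}$. Failing that, I would read the regularity hypothesis as applying to all of $\partial\cD_{[0,T)}$, which is how it is used in Example~\ref{ex:2}. A secondary technical point is handling $t_n\neq t$ via the stopped-time formulation in \eqref{eq:Zcont}, which I expect to be routine.
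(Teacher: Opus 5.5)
Your proposal is correct and follows the paper's proof essentially step by step. The ingredients match: almost sure convergence along sub-subsequences from \eqref{eq:Zcont}; the lower bound from the positive distance between the path and the closed set $\cD$; $\Q_{t,z}(\tau_\cD=T)=0$ from Assumption~\ref{ass:interior} and Assumption~\ref{ass:u}-(ii); and the upper bound from the strong Markov property at $\tau_\cD$, regularity and openness of $\cD^\circ$. The paper only packages this last step differently, as $\tau_\cD=\tau_{\cD^\circ}=\sigma_{\cD^\circ}$ a.s.

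The paper handles your ``main obstacle'' exactly by your fallback reading. In \eqref{eq:taud0} it applies regularity at every point of $\cD_{[0,T)}$, i.e.\ it tacitly assumes $\cD_{[0,T)}\subseteq\overline{\cD^\circ_{[0,T)}}$. Your first option cannot replace this. Take $d=3$, $Z$ a Brownian motion and $\cD=[0,T]\times K$ with $K$ a line segment: all hypotheses hold, since $\cD^\circ=\cD^\circ_T=\varnothing$ and $K$ is Lebesgue-null. Yet \eqref{eq:regD} fails at $(t,z)$ with $t<T$, $z\in K$, because $\tau^{t,z}_\cD=t$ while $\tau^{t,z_n}_\cD=\infty$ a.s.\ for $z_n\notin K$ (segments are polar in $\mathbb{R}^3$).
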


\begin{proof}
First of all we notice that by Assumption \ref{ass:interior} and continuity of paths of $Z$, for $(t,z)\in\cC$ we have
\[
\{\tau^{t,z}_\cD=T\}=\{(s,Z^{t,z}_s)\in\cC,s\in[t,T)\}\cap\{(T,Z^{t,z}_T)\in\cD_T\}\subset\{(T,Z^{t,z}_T)\in\partial\cC_T\}.
\]
Then, Assumption \ref{ass:u}-(ii) implies 
\begin{equation}\label{eq:Qtau}
\Q_{t,z}(\tau_\cD=T)=0,\quad \text{for $(t,z)\in\cC$}.
\end{equation}

If $(T,z)\in\cD^\circ_T$, Assumption \ref{ass:interior} implies that for any sequence $(t_n,z_n)\to (T,z)$ there is a large enough index $N\in\N$ so that $(t_n,z_n)\in\cD$ for all $n\ge N$. That implies $\tau^{t_n,z_n}_{\cD}(\omega)=t_n$ for all $n\ge N$ and all $\omega\in\Omega$. Therefore, $\tau^{t_n,z_n}_{\cD}\to T$ a.s.\ as $n\to\infty$, proving that \eqref{eq:regD} holds for $(T,z)\in\cD^\circ_T$. 

Next we consider $(t,z)\in\overline\cC_{[0,T)}\cup \cC_T$. 
Letting $\tau^{t,z}_{\cD^\circ}=\inf\{s\in[t,T]:(s,Z^{t,z}_s)\in\cD^\circ\}$ with the convention $\inf\varnothing=\infty$, 
it is clear that $\tau_{\cD}\le\tau_{\cD^\circ}$. 
Notice that 
\begin{equation}\label{eq:taud0}
\Q_{t,z}(\tau_{\cD}=\tau_{\cD^\circ}=0)=1,\quad\text{for $(t,z)\in\cD_{[0,T)}$},
\end{equation} 
by regularity in the sense of diffusions of $\cD^\circ_{[0,T)}$. It is also clear that $\Q_{T,z}(\tau_{\cD}=\tau_{\cD^\circ}=\infty)=1$ for $(T,z)\in\cC_T$.
Let us now consider $(t,z)\in\cC_{[0,T)}$. Arguing by contradiction let us assume that $\Q_{t,z}(\tau_{\cD^\circ}>\tau_{\cD})>0$. Notice that $\{\tau_{\cD^\circ}>\tau_{\cD}\}\subset\{\tau_\cD<T\}$, $\Q_{t,z}$-a.s., thanks to \eqref{eq:Qtau}. We can find $\delta>0$ such that $\Q_{t,z}(\tau_{\cD^\circ}\ge \tau_{\cD}+\delta)>0$. 
Moreover, by continuity of paths, 
we have  $(\tau_\cD,Z_{\tau_{\cD}})\in\partial\cD_{[0,T)}$, $\Q_{t,z}$-a.s.\ on the event $\{\tau_\cD<T\}$.
Combining these two facts with strong Markov property we obtain
\begin{equation}
\begin{aligned}
0 &<\Q_{t,z}(\tau_{\cD^\circ}\ge \tau_{\cD}+\delta)=\E^\Q_{t,z}\Big[\Q_{t,z}\big(\tau_{\cD^\circ}\ge \tau_{\cD}+\delta\big|\cF_{\tau_\cD}\big)1_{\{\tau_\cD<T\}}\Big]\\
&=\E^\Q_{t,z}\Big[1_{\{\tau_\cD<T\}}\Q_{\tau_\cD,Z_{\tau_\cD}}\big(\tau_{\cD^\circ}\ge \delta\big)\Big]=0,
\end{aligned}
\end{equation} 
because by regularity of $\cD^\circ_{[0,T)}$ it holds $\Q_{\tau_\cD,Z_{\tau_\cD}}\big(\tau_{\cD^\circ}=0\big)=1$, $\Q_{t,z}$-a.s.\ on $\{\tau^{t,z}_\cD<T\}$, by \eqref{eq:taud0}. Hence we reach a contradiction and we conclude that  
\begin{equation}\label{eq:taudintd}
\Q_{t,z}(\tau_{\cD}=\tau_{\cD^\circ})=1,
\end{equation}
for all $(t,z)\in\cC_{[0,T)}\cup\cD_{[0,T)}\cup\cC_T$.
By continuity of paths of $Z$ and regularity of $\cD^\circ_{[0,T)}$ it follows that for $(t,z)\in\cC$ the entry time and the {\em hitting time} to $\cD^\circ$ coincide, i.e.,
\begin{equation}\label{eq:reg2}
\Q_{t,z}\big(\tau_{\cD^\circ}=\sigma_{\cD^\circ}\big)=1,\quad(t,z)\in\cC,
\end{equation}
where 
\[
\tau_{\cD^\circ}=\inf\{s\in[t,T]:(s,Z_s)\in\cD^\circ\}\quad\text{and}\quad\sigma_{\cD^\circ}=\inf\{s\in(t,T]:(s,Z_s)\in\cD^\circ\},
\]
with $\inf\varnothing=\infty$. 

Thanks to \eqref{eq:Zcont}, for any sequence $(t_n,z_n)_{n\in\N}\subset \cC$ converging to  $(t,z)\in \overline\cC_{[0,T)}\cup\cC_T$ we can extract a subsequence $(t_{n_k},z_{n_k})_{k\in\N}$ for which
\begin{equation}\label{eq:Zas}
\lim_{k\to\infty}\sup_{s\in[0,\delta]}\big|Z^{t_{n_k},z_{n_k}}_{t_{n_k}+s}(\omega)-Z^{t,z}_{t+s}(\omega)\big|=0,\quad \mbox{for all } \omega\in\Omega_0,
\end{equation}
where $\Omega_0\in\cF$ is such that $\Q(\Omega_0)=1$ (notice that $\Omega_0$ may depend on the original sequence). For the ease of notation let us relabel $Z^k=Z^{n_k}$, $\tau^k_\cD=\tau^{t_{n_k},z_{n_k}}_{\cD}$, $\tau^k_{\cD^\circ}=\tau^{t_{n_k},z_{n_k}}_{\cD^\circ}$, $\sigma^k_\cD=\sigma^{t_{n_k},z_{n_k}}_{\cD}$ and $\sigma^k_{\cD^\circ}=\sigma^{t_{n_k},z_{n_k}}_{\cD^\circ}$. It then holds, 
\begin{equation}\label{eq:limsup}
\limsup_{k\to\infty}\tau^{k}_\cD=\limsup_{k\to\infty}\tau^{k}_{\cD^\circ}
=\limsup_{k\to\infty}\sigma^{k}_{\cD^\circ}\le \sigma^{t,z}_{\cD^\circ}=\tau^{t,z}_{\cD^\circ}=\tau^{t,z}_{\cD},\quad\Q-a.s.,
\end{equation}
where we use \eqref{eq:taudintd} for the first and last equalities, \eqref{eq:reg2} for the second equality and the inequality can be proven as follows. Fix $\omega\in\Omega_0$. If $\sigma^{t,z}_{\cD^\circ}(\omega)=\infty$ then the inequality is trivial. If $\sigma^{t,z}_{\cD^\circ}(\omega)<\infty$, then it must be $\sigma^{t,z}_{\cD^\circ}(\omega)<T$ by \eqref{eq:Qtau} and regularity of $\cD^\circ_{[0,T)}$ in the sense of diffusions. Since $\cD^\circ$ is open, for any sufficiently small $\eps>0$ we have $(s,Z^{t,z}_{s}(\omega))\in\cD^\circ$ for $s\in(\sigma^{t,z}_{\cD^\circ}(\omega),\sigma^{t,z}_{\cD^\circ}(\omega)+\eps)$. It follows from \eqref{eq:Zas} that also $(s,Z^{k}_{s}(\omega))\in\cD^\circ$ for $s\in(\sigma^{t,z}_{\cD^\circ}(\omega),\sigma^{t,z}_{\cD^\circ}(\omega)+\eps)$, and therefore $\sigma^k_{\cD^\circ}(\omega)\le \sigma^{t,z}_{\cD^\circ}(\omega)+\eps$, for $k$ sufficiently large. Then
\[
\limsup_{k\to\infty}\sigma^{k}_{\cD^\circ}(\omega)\le \sigma^{t,z}_{\cD^\circ}(\omega)+\eps,
\]
and the claim is proven by letting $\eps\to 0$.

It can also be shown (following arguments as in, e.g., \cite[Lemma 1.2]{M80}) that 
\begin{equation}\label{eq:liminf}
\liminf_{k\to\infty}\tau^{k}_\cD\ge \tau^{t,z}_{\cD},\quad \Q-a.s.
\end{equation}
We repeat the full argument here due to subtle differences with the setup in \cite{M80}. 
Since $\Q(\tau^{k}_\cD=T)=0$ for all $k\in\N$ (cf.\ \eqref{eq:Qtau}), then we also have $\Q(\tau^{k}_\cD=T\text{ for some $k\in\N$})=0$. Thus, with no loss of generality we assume
that 
\begin{equation}\label{eq:omega0}
\omega\in\Omega_0\implies \omega\in\bigcap_{k\in\N}\{\tau^{k}_\cD<T\}\cup\{\tau^{k}_\cD=\infty\}.
\end{equation}

Fix $\omega\in\Omega_0$. If $\tau^{t,z}_\cD(\omega)=0$, then \eqref{eq:liminf} holds trivially. If $\tau^{t,z}_\cD(\omega)>0$, let $\eta>0$ be arbitrary and such that $\tau^{t,z}_\cD(\omega)>\eta$. Since $\cD$ is closed and sample paths are continuous, then 
\[
c_\omega\coloneqq\inf_{t\le s \le \eta\wedge T}\mathrm{dist}\big((s,Z^{t,z}_s(\omega)),\cD\big)>0.
\]
Thanks to \eqref{eq:Zas}, there is $K_\omega\in\N$ such that  
\[
\inf_{t_{n_k}\le s \le \eta\wedge T}\mathrm{dist}\big((s,Z^{k}_s(\omega)),\cD\big)\ge \tfrac12 c_\omega>0,\quad\ \text{for\ all}\ k\ge K_\omega.
\]
Hence $\tau^{k}_\cD(\omega)\ge \eta\wedge T$ for all $k\ge K_\omega$. By \eqref{eq:omega0} it is clear that when $\eta\ge T$ it must be $\tau^{k}_\cD(\omega)=\infty$ for all $k\ge K_\omega$ and therefore $\liminf_{k\to\infty}\tau^{k}_\cD(\omega)\ge \tau^{t,z}_\cD(\omega)$ holds trivially. Instead, for $\tau_\cD(\omega)<T$ and $\eta<T$ we have
\[
\theta(\omega)\coloneqq\liminf_{k\to\infty}\tau^{k}_\cD(\omega)\ge \eta.
\]
Letting $\eta\uparrow\tau^{t,z}_\cD(\omega)$ we obtain $\theta(\omega)\ge \tau^{t,z}_\cD(\omega)$ for all $\omega\in\Omega_0$. Hence, \eqref{eq:liminf} holds.

Combining \eqref{eq:limsup} and \eqref{eq:liminf} we obtain that, from any sequence $(t_n,z_n)_{n\in\N}$ converging to $(t,z)\in\overline\cC_{[0,T)}\cup\cC_T$ we can extract a sub-sequence $(t_{n_k},z_{n_k})_{k\in\N}$ such that 
\begin{equation}\label{eq:limtau} 
\lim_{k\to\infty}\tau^{t_{n_k},z_{n_k}}_{\cD}=\tau^{t,z}_\cD,\quad\Q-a.s.
\end{equation}
We now show that the latter implies \eqref{eq:regD}.

Arguing by contradiction, let us assume the existence of a sequence $(t_n,z_n)_{n\in\N}\subset\cC$ converging to $(t,z)\in\overline\cC_{[0,T)}\cup\cC_T$ but such that 
\begin{equation}\label{eq:contr2}
\limsup_{n\to\infty}\Q\big(\big|\tau^{t_n,z_n}_\cD\wedge (2T)-\tau^{t,z}_\cD\wedge (2T)\big|>\eps\big)>0,
\end{equation}
for some $\eps>0$. Up to selecting a subsequence we can replace ``$\limsup$'' with ``$\lim$'' and we do that without further mentioning. Then, in particular, \eqref{eq:contr2} holds along any further subsequence. This is however impossible because we know that there is at least one subsequence along which \eqref{eq:limtau} holds. Thus \eqref{eq:regD} holds. 
\end{proof}

\begin{remark}\label{rem:suff2}
Regularity of $\cD^\circ_{[0,T)}$ for the process $(s,Z_s)$ is guaranteed if the boundary of the set is, for example, Lipschitz continuous and the diffusion coefficient $\sigma$ is strictly elliptic at points on the boundary (i.e., for $(t,z)\in\partial\cD_{[0,T)}$ there is a constant $c=c(t,z)$ such that $\langle\sigma\sigma^\top (t,z)\nu,\nu\rangle\ge c|\nu|^2$ for any $\nu\in\R^d$). The Lipschitz condition can be substantially relaxed in several situations of interest. For example, if $Z\in\R$ and $\sigma(t,z)>0$, then it is sufficient that $\cD_{[0,T)}=\{(t,z):z\in(b(t),c(t))\}$ for some functions $b$ and $c$ that are non-decreasing and non-increasing, respectively. If instead $Z\in\R^2$ and the diffusion coefficient $\sigma$ is strictly elliptic at points of the boundary of $\cD_{[0,T)}$, then it is sufficient that $\cD_{[0,T)}=\{(t,z_1,z_2):z_2\in(b(t,z_1),c(t,z_1))\}$ for functions $b$ and $c$ that have a fixed monotonicity in $z_1$ for every $t$ (e.g., non-decreasing in $z_1$ for every $t$) and are non-decreasing and non-increasing, respectively, as functions of $t$. 
\end{remark}

Now we turn our attention to setups with the flavour of Examples \ref{ex:1} and \ref{ex:3}, which however may not be amenable to explicit calculations for the function $u$. We make the following assumption on the structure of $\cD$ and on the law of the process $(Z_t)_{t\in[0,T]}$.
\begin{assumption}\label{ass:ball}
Let $A_0\subset\R^d$ be open and let $t_0\in(0,T]$. We assume $\cD=\{t_0\}\times \overline A_0$ and
\begin{equation}\label{eq:Zt0}
\Q_{t,z}(Z_{t_0}\in\partial A_0)=0,\quad \text{for all $(t,z)\in[0,t_0)\times\R^d$},
\end{equation}
with $\partial A_0=\overline A_0\setminus A_0$. We also denote $\cD^\circ_{t_0}=\{t_0\}\times A_0$.
\end{assumption}
By the assumption we have $\Q_{t,z}(\tau_\cD=\infty)=1$ for $(t,z)\in([t_0,T]\times \R^d)\setminus \cD$. Then, 
\[
u(t,z)=\E_{t,z}\Big[\exp\Big(-\frac12\int_t^T f(s,Z_s)\ud s-\frac12 g(Z_T)\Big)\Big],
\]
for $(t,z)\in([t_0,T]\times \R^d)\setminus \cD$.
By the Markov property
\begin{equation}\label{eq:ut0}
u(t,z)=\E_{t,z}\Big[\exp\Big(-\frac12\int_t^{t_0} f(s,Z_s)\ud s\Big)u(t_0,Z_{t_0})1_{\{t_0<\tau_\cD\}}\Big],\quad \text{for $(t,z)\in[0,t_0]\times\R^d$}.
\end{equation}
Assuming $f\in(\overline\cC)$ and $g\in C(\R^d)$, it is not hard to verify that $u(t_0,\cdot)\in C(\R^d\setminus \overline{A}_0)$ and it can be continuously extended to $\R^d\setminus A_0$. Then, with no loss of generality, we could also restrict our attention to the case $t_0=T$, upon replacing $g(z)$ with $-\frac12\ln u(t_0,z)$ in \eqref{eq:ut0}.

\begin{proposition}\label{prop:suffcond3}
Let \eqref{eq:Zcont} and Assumption \ref{ass:ball} hold. Then \eqref{eq:regD} holds for any sequence $(t_n,z_n)_{n\in\N}\subset\cC$ converging to $(t,z)\in\cC$ and for any sequence $(t_n,z_n)_{n\in\N}\subset\cC$ converging to $(t,z)\in\cD^\circ_{t_0}$ with $t_n<t_0$. Hence, $u\in C(\cC)$ and $u(t,z)$ restricted to the set $[0,t_0]\times\R^d$ is continuous in $\cC_{[0,t_0)}\cup\cD^\circ_{t_0}=\cO_{[0,t_0]}\setminus(\{t_0\}\times\partial A_0)$, provided that also $f\in C(\overline\cC)$ and $g\in C(\R^d)$.  
\end{proposition}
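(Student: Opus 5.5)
Under Assumption \ref{ass:ball} the forbidden set is a single time-slice, $\cD=\{t_0\}\times\overline A_0$. So for $(t,z)\in\cO$ with $t<t_0$ we have exactly one of two outcomes: $\tau^{t,z}_\cD=t_0$ on $\{Z^{t,z}_{t_0}\in\overline A_0\}$, and $\tau^{t,z}_\cD=\infty$ otherwise. For $t>t_0$, or for $t=t_0$ with $z\notin\overline A_0$, we have $\tau_\cD=\infty$ surely. The plan is to reduce \eqref{eq:regD} to convergence in probability of the indicators $1_{\{Z^{t_n,z_n}_{t_0}\in\overline A_0\}}$. Then we feed the result into Proposition \ref{prop:suffcond}(a),(b) for continuity of $u$. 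Assumption \ref{ass:u}-(ii) is not needed in that proposition's proof for these points, since only \eqref{eq:Zcont} and \eqref{eq:regD} enter the argument.

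\emph{Case $(t,z)\in\cC$ with $t<t_0$.} Eventually $t_n<t_0$. We have $|\tau^{n}_\cD\wedge 2T-\tau_\cD\wedge 2T|>\eps$ only if exactly one of $Z^{t_n,z_n}_{t_0}$ and $Z^{t,z}_{t_0}$ lies in $\overline A_0$. By \eqref{eq:Zcont} with $\delta\ge t_0-t_n$, we get $Z^{t_n,z_n}_{t_0}\to Z^{t,z}_{t_0}$ in $L^1$, hence in probability. Any subsequence has a further a.s.-convergent subsequence. On $\{Z^{t,z}_{t_0}\in A_0\}$ (open) and on $\{Z^{t,z}_{t_0}\notin\overline A_0\}$ (open complement), the indicators eventually agree. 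The remaining event $\{Z^{t,z}_{t_0}\in\partial A_0\}$ is null by \eqref{eq:Zt0}. Hence the mismatch probability tends to zero, and \eqref{eq:regD} follows.

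\emph{Case $(t,z)\in\cC$ with $t\ge t_0$.} If $t>t_0$, eventually $t_n>t_0$ and both stopping times equal $\infty$. If $t=t_0$ with $z\notin\overline A_0$, points with $t_n\ge t_0$ give $\tau=\infty$ trivially. For $t_n<t_0$, we write $Z^{t_n,z_n}_{t_0}-z$ and use \eqref{eq:Zcont} together with continuity of $s\mapsto Z^{t_0,z}_{t_0+s}$ at $s=0$; more simply, we use $\E|Z^{t_n,z_n}_{t_0}-z_n|\to0$, which follows from the linear growth of $\mu,\sigma$ and a standard moment bound. Since $z$ lies in the open set $\R^d\setminus\overline A_0$, this gives $\Q(Z^{t_n,z_n}_{t_0}\in\overline A_0)\to0$.

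\emph{Case $(t_0,z)\in\cD^\circ_{t_0}$ with $t_n<t_0$.} Here $\tau^{t_0,z}_\cD=t_0$, and we need $\Q(Z^{t_n,z_n}_{t_0}\notin\overline A_0)\to0$. By the same short-time estimate $Z^{t_n,z_n}_{t_0}\to z$ in probability, and $z$ lies in the open set $A_0$, so this holds. I expect this short-time estimate to be the main technical point. I would obtain it either directly from \eqref{eq:Zcont} applied at the limit point, by comparing $Z^{t_n,z_n}_{t_0}$ with $Z^{t_0,z}_{t_0}=z$ (reading $(t_n+s)\wedge T$ with $s=t_0-t_n$), or from BDG plus linear growth, which give $\E\sup_{[t_n,t_0]}|Z-z_n|\le C(t_0-t_n)^{1/2}(1+|z_n|)$.

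Finally, Proposition \ref{prop:suffcond}(a) yields $u\in C(\cC)$, given $f\in C(\overline\cC)$ and $g\in C(\R^d)$. For continuity of the restriction to $[0,t_0]\times\R^d$ at $\cD^\circ_{t_0}$, we run the argument of Proposition \ref{prop:suffcond}(b) with approaching sequences satisfying $t_n<t_0$ (sequences inside $\cD^\circ_{t_0}$ give $u=0$ trivially). Decomposition \eqref{eq:uku} together with the convergence established above gives $u(t_n,z_n)\to0=u(t_0,z)$. Combining this with $u\in C(\cC)$ yields continuity on $\cO_{[0,t_0]}\setminus(\{t_0\}\times\partial A_0)$.
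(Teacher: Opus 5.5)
Your proof is correct, but it takes a different and more elementary route than the paper.

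\textbf{The paper's route.} The paper argues pathwise. From \eqref{eq:Zcont} it extracts a subsequence along which the paths converge uniformly a.s. It then reuses the $\liminf$ argument from the proof of Proposition~\ref{prop:suffcond2} to get $\liminf_k\tau^k_\cD\ge\tau^{t,z}_\cD$, which settles the event $\{\tau^{t,z}_\cD=\infty\}$. The event $\{\tau^{t,z}_\cD=t_0\}$ is handled by using \eqref{eq:Zt0} to place $Z^{t,z}_{t_0}$ in $A_0$ and trapping the approximating paths in a cylinder $[t_0-c,t_0]\times A_0$. Finally, subsequential a.s.\ convergence of the exit times is upgraded to \eqref{eq:regD} by contradiction.

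\textbf{Your route.} You exploit the fact that $\cD$ is a single time slice. For a start before $t_0$ this gives $\{\tau_\cD=t_0\}=\{Z_{t_0}\in\overline A_0\}$, with $\tau_\cD=\infty$ otherwise. So \eqref{eq:regD} reduces to $\Q\big(1_{\overline A_0}(Z^{t_n,z_n}_{t_0})\neq 1_{\overline A_0}(Z^{t,z}_{t_0})\big)\to0$. That needs only convergence in probability of the time-$t_0$ marginals together with \eqref{eq:Zt0}, and no hitting-time analysis at all.

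\textbf{Comparison.} Your short-time estimate $Z^{t_n,z_n}_{t_0}\to z$ treats the limit points $(t_0,z)$ explicitly. For $z\in A_0$ this is cleaner than the paper's cylinder construction, which has to be reinterpreted there because $Z^{t_0,z}$ has no path before $t_0$. What the paper's pathwise method buys is robustness: it is the technique that carries over to forbidden sets that are not a single time slice, as suggested at the end of the section.

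\textbf{Two small remarks.}
First, \eqref{eq:Zcont} compares the two processes at the same elapsed time, not the same calendar time. For $t<t_0$ it therefore gives $Z^{t_n,z_n}_{t_0}-Z^{t,z}_{(t+t_0-t_n)\wedge T}\to0$ in $L^1$. You then need continuity of the paths of $Z^{t,z}$ at $t_0$ to replace the second term by $Z^{t,z}_{t_0}$; this yields the convergence in probability you actually use.

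Second, you are right that Assumption~\ref{ass:u}-(ii) plays no role in the proof of Proposition~\ref{prop:suffcond}. In any case it holds automatically under Assumption~\ref{ass:ball}. If $t_0<T$ then $\partial\cC_T=\varnothing$. If $t_0=T$ then $\partial\cC_T\subseteq\{T\}\times\partial A_0$, so \eqref{eq:Zt0} applies.
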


\begin{proof}
The second statement in the proposition is a direct consequence of first one. We now prove the first statement.

For $(t,z)\in(t_0,T]\times\R^d$ the result is obvious because $\tau^{t,z}_\cD=\infty$ and therefore $\tau^{t_n,z_n}_\cD=\infty$ for $t_n>t_0$. 
For $(t,z)\in[0,t_0]\times\R^d$ it is clear that $\tau^{t,z}_\cD\in\{t_0,\infty\}$. For any sequence $(t_n,z_n)_{n\in\N}\subset\cC$ converging to $(t,z)$ we can select a convergent subsequence $(t_{n_k},z_{n_k})\subset\cC$ such that, by the same argument that leads to \eqref{eq:liminf} in the proof of Proposition \ref{prop:suffcond2}, we have
\[
\liminf_{k\to\infty}\tau^k_\cD(\omega)\ge \tau^{t,z}_\cD(\omega),
\]
for $\omega\in\Omega_0$ with  $\Q(\Omega_0)=1$.
Then, for $\omega\in\Omega_0$ such that $\tau^{t,z}_\cD(\omega)=\infty$, we conclude that $\lim_{k\to\infty}\tau^k_\cD(\omega)=\tau^{t,z}_\cD(\omega)=\infty$. 

For $\omega\in\Omega_0$ such that $\tau^{t,z}_\cD(\omega)=t_0$ we can assume with no loss of generality that $Z^{t,z}_{t_0}(\omega)\in A_0$, because of Assumption \ref{ass:ball}. In particular, we can find an open ball $B_0\subset A_0$ with $\inf_{y\in\overline B_0}\mathrm{dist}(y,\partial A_0)\ge \eps>0$ and $Z^{t,z}_{t_0}(\omega)\in B_0$. Then, by continuity of paths of $Z$ we can also construct a cylinder $C_\omega\coloneqq[t_0-c,t_0]\times A_0$ with $c=c_\omega>0$ and such that $(s,Z^{t,z}_s(\omega))\in C_\omega$ for all $s\in[t_0-c,t_0]$. Therefore, \eqref{eq:Zcont} implies that for sufficiently large $k$ it must be $(s,Z^{k}_s(\omega))\in C_\omega$ for all $s\in[t_0-c,t_0]$. That implies $\tau^k_\cD(\omega)=t_0$ for all sufficiently large $k$'s. So we have $\lim_{k\to\infty}\tau^k_\cD(\omega)=\tau^{t,z}_\cD(\omega)=t_0$. The same argument as in the final paragraph of the proof of Proposition \ref{prop:suffcond2} shows that \eqref{eq:regD} holds.
\end{proof}

A combination of the techniques from Propositions \ref{prop:suffcond2} and \ref{prop:suffcond3} allows in principle to address more complex settings. We leave this further extensions aside because such analysis is more easily carried out on a case by case basis.
\medskip

\noindent{\bf Acknowledgment}: T.\ De Angelis received partial financial support from EU -- Next Generation EU -- PRIN2022 (2022BEMMLZ) and PRIN-PNRR2022 (P20224TM7Z). E.\ Ekström greatfully acknowledges support from the Swedish Research Council.

\vspace{-5pt}

\end{document}